\DeclareMathOperator{\arccot}{arccot}
\newtheorem{theo}{Theorem}[section]
\newtheorem{coro}[theo]{Corollary}
\newtheorem{lemma}{Lemma}[section]
\theoremstyle{definition}
\newtheorem{definition}{Definition}[section]
\theoremstyle{remark}
\newtheorem{remark}{Remark}[section]
\newcommand{\dsp}{\displaystyle}
\newcommand{\eProof}{\hfill {\tiny \mbox{$\Box$}}\medskip \smallskip} 
\renewcommand{\Re}{\mathop{\rm Re}}
\renewcommand{\Im}{\mathop{\rm Im}}
\newcommand{\const}{\mathop{\rm const}}
\newcommand{\C}{\mathbb{C}}
\newcommand{\D}{\mathbb{D}}
\newcommand{\R}{\mathbb{R}}
\newcommand{\T}{\mathbb{T}}
\newcommand{\fd}{\mathfrak{d}}
\DeclareMathOperator{\supp}{supp}
\begin{document}
\setlength{\parskip}{0.0cm}

\title{Extreme zeros in a sequence of para-orthogonal polynomials and bounds for the support of the measure}
\author{{A. Mart\'{i}nez-Finkelshtein$^{a}$, A. Sri Ranga$^{b}$ and D.O. Veronese$^{c}$} \\[1.5ex] 
{\small $^{a}$Departamento de Matem\'{a}ticas, Universidad de Almer\'{i}a, 04120 Almer\'{i}a,}\\
{\small  and Instituto Carlos I de F\'{\i}sica Te\'orica and Computacional, Granada University, Spain} \\[1ex]
{\small $^{b}$DMAp, IBILCE, } 
{\small UNESP - Universidade Estadual Paulista} \\
{\small 15054-000  S\~{a}o Jos\'{e} do Rio Preto,  SP,  Brazil} \\[1ex]
{\small $^{c}$ICTE, UFTM - Universidade Federal do Tri\^{a}ngulo Mineiro }\\
{\small 38064--200 Uberaba, MG, Brazil} \\}
\date{\today}
\maketitle 


\begin{abstract} 
Given a non-trivial Borel measure $\mu$ on the unit circle $\mathbb T$, the corresponding reproducing (or Christoffel-Darboux) kernels with one of the variables fixed at $z=1$ constitute a family of so-called para-orthogonal polynomials, whose zeros belong to $\mathbb T$. With a proper normalization they satisfy a three-term recurrence relation determined by two sequence of real coefficients, $\{c_n\}$ and $\{d_n\}$, where $\{d_n\}$ is additionally a positive chain sequence. Coefficients $(c_n,d_n)$ provide a parametrization of a family of measures related to $\mu$ by addition of a mass point at $z=1$.

In this paper we estimate the location of the extreme zeros (those closest to $z=1$) of the para-orthogonal polynomials from the $(c_n,d_n)$-parametrization of the measure, and use this information to establish sufficient conditions for the existence of a gap in the support of $\mu$ at $z=1$. These results are easily reformulated in order to find gaps in the support of $\mu$ at any other $z\in \mathbb T$. 

We provide also some examples showing that the bounds are tight and illustrating their computational applications.

\end{abstract}


\setcounter{equation}{0}
\section{Introduction} \label{Sec-Intro}

Orthogonal polynomials on the unit circle (in short, OPUC), also known as Szeg\H{o} polynomials, are one of the most beautiful objects in Classical Analysis, with deep connections and applications in many areas of Mathematics and Engineering. Their systematic study, started by Szeg\H{o} \cite{Szego-Book} and Geronimus \cite{Geronimus-AMSTransl-1977}, has experimented recently an important burst of activity, stimulated in part by their relation with the spectral theory \cite{Simon-Book-p1,Simon-2011}.

Given a nontrivial probability measure $\mu$ on the unit circle $\T: = \{\zeta=e^{i\theta}\!\!: \, 0 \leq \theta \leq 2\pi \}$ the associated orthonormal OPUC $\varphi_n(z)=\kappa_n z^n +\text{lower degree terms}$ are defined by $\deg \varphi_n=n$, $\kappa_n>0$ and 
\[
    \int_{\T} \varphi_{n}(z) \overline{\varphi_{m}(z)} d \mu(z) = \int_{0}^{2\pi} \varphi_{n}(e^{i\theta}) \overline{\varphi_{m}(e^{i\theta})} d \mu(e^{i\theta}) = \delta_{nm} ,
\]
where $\delta_{nm}$ stands for the Kronecker delta. Among their fundamental properties is that all their zeros belong to the open unit disk $\D:= \{ z\in \C: \, |z|<1\} $, and that they satisfy the Szeg\H{o} recurrence,
\begin{equation} \label{Szego-A-RR}
\Phi_{n}(z) =  z \Phi_{n-1}(z) - \overline{\alpha}_{n-1}\, \Phi_{n-1}^{\ast}(z), \quad 
n \geq 1,
\end{equation}
given here in terms of the monic OPUC $\Phi_{n}(z) = \varphi_{n}(z)/\kappa_n$, $n \geq 0$. The coefficients  $\alpha_{n-1} = - \overline{\Phi_{n}(0)}$ are the Verblunsky coefficients, and $\Phi_{n}^{\ast}(z) = z^{n} \overline{\Phi_{n}(1/\bar{z})}$.  It is well known that  $|\alpha_n| < 1$, $n \geq 0$, and that $\{\alpha_n \}_{n=0}^\infty \subset \D^\infty \leftrightarrow \{\mu \}$ is a bijection between the set of Verblunsky coefficients and positive Borel measures on $\T$ (see e.g.~\cite{Simon-Book-p1}, as well as \cite{ENZG1}). Establishing the correspondence $\rightarrow$ (resp., $\leftarrow$) is the \textit{direct} (resp., \textit{inverse}) spectral problem. In particular, recovering the support of the associated measure from the sequence of Verblunsky coefficients is an important part of the direct spectral problem.

In the classical situation of orthogonal polynomials on the real line, their zeros are strongly correlated with the support of the measure of orthogonality: they all belong to the convex hull of this support, and each polynomial has at most one zero in each gap of the support. This is not the case of the OPUC:  the behavior of their zeros in $\D$ can be very complicated, even for the simplest weights, see e.g.~\cite{MFMLS,MFMLS2}. Looking for sequences of polynomials related to $\mu$ and with zeros \emph{on} $\T$,  Jones, Nj\aa stad and Thron \cite{JoNjTh-1989} introduced the {\em para-orthogonal polynomials on the unit circle} (in short, POPUC): they are, up to normalization, polynomials of the form $\Phi_n(z) - \tau_{n}\Phi_n^{\ast}(z)$, where $|\tau_{n}| =1$. The definition $z\Phi_{n-1}(z) - \tilde{\tau}_{n-1}\Phi_{n-1}^{\ast}(z)$, where $|\tilde{\tau}_{n-1}| = 1$, is also used and is equivalent, since  
\[
     \Phi_n(z) - \tau_{n}\Phi_n^{\ast}(z) = (1+\tau_{n}\alpha_{n-1}) \Big[z\Phi_{n-1}(z) - \frac{\tau_{n}+\overline{\alpha}_{n-1}}{1+\tau_{n}\alpha_{n-1}}\Phi_{n-1}^{\ast}(z)\Big],
\]
and
\[
|\tau_{n}| =1 \quad \Leftrightarrow \quad \left|\frac{\tau_{n}+\overline{\alpha}_{n-1}}{1+\tau_{n}\alpha_{n-1}}\right| = 1.
\]
By \eqref{Szego-A-RR}, a POPUC of degree $n$ can be constructed from the set $\{\Phi_{j}\}_{j=0}^{n-1}$ of OPUC, or equivalently from the set of Verblunsky coefficients $\{\alpha_j\}_{j=0}^{n-1}$, by forcing the last coefficient $\alpha_{n-1}$ in \eqref{Szego-A-RR} to be on $\T$. For further information on POPUC see \cite{CantMoraVela-2002, DaNjVA-2003, Dimitar-Ranga-MN2013, Golinskii-2002, Simon-2011, Wong-2007} and the references therein. 

Zeros of all POPUC lie on the unit circle $\T$. In fact, Geronimus \cite{Geronimus-AM1946} (see also \cite{JoNjTh-BLMS1989}) gave a general construction of rational functions whose denominators are an arbitrary sequence of POPUC, and such that they converge to the Carath{\'e}odory function $\int_{\mathcal{C}}(z+\zeta)(z-\zeta)^{-1}d\mu(\zeta)$ in $\D$ (and by symmetry, also in the complement to $\overline{\D}$). For an \emph{arbitrary} sequence of POPUC not much else can be said.

In the case when $\mu$ is not supported on the whole $\T$, we would like to choose the sequence of POPUC whose zeros are consistent with the $\supp(\mu)$. Ideally, one needs to assure that the set of attracting points (strong limit points in the terminology of \cite{CantMoraVela-JAT2006}) of the zeros of the sequence  of para-orthogonal polynomials coincides with the support of the measure. This would allow one to characterize this support in terms of the asymptotic distribution of such zeros. This problem was essentially solved in the work of Cantero, Moral and Vel\'{a}zquez \cite{CantMoraVela-JAT2006}, where they put forward a  rather general construction of the sequence of POPUC with such a property, using the connection with the spectral theory of the (unitary) truncated CMV matrices.

As it was already observed by Golinskii \cite{Golinskii-2002}, a particular instance of the polynomials considered in \cite{CantMoraVela-JAT2006} is closely related to the  reproducing (also Christoffel-Darboux or CD) kernel associated to $\mu$. Recall that the CD kernel is
\begin{equation}\label{def:CDkernel}
K_{n}(w,z) := \sum_{j=0}^{n} \overline{\varphi_j(w)}\,\varphi_j(z) \,, \quad n \geq 0, 
\end{equation}
and the well-known Christoffel-Darboux formula (see, for example, \cite[Thm. 2.2.7]{Simon-Book-p1}) says that  for $z\neq w$,
\begin{equation}\label{CDformula}
  \begin{array}{ll}
    K_{n}(w,z)  & \dsp = \frac{ \overline{\varphi_{n+1}(w)}\,\varphi_{n+1}(z)- \overline{\varphi_{n+1}^{\ast}(w)}\,\varphi_{n+1}^{\ast}(z)} {\bar{w}\, z-1} \\[2ex]
    & \dsp = \frac{ \overline{w} z \overline{\varphi_{n}(w)}\,\varphi_{n}(z)- \overline{\varphi_{n}^{\ast}(w)}\,\varphi_{n}^{\ast}(z) } {\bar{w}\, z-1}\,,  \quad n \geq 0.
  \end{array}
\end{equation}
It shows that if we fix $w\in \T$, then up to a normalization factor, $(\bar{w}\, z-1) K_{n}(z,\,w)$ is a POPUC of degree $n+1$:
\begin{equation}
  \begin{array}{ll}
(\bar{w}\, z-1) K_{n}(z,\,w) &= \const \left( \Phi_{n+1}(z)- \tau_{n+1}(w)\,\Phi_{n+1}^{\ast}(z) \right)  \\[1ex]
&= \const \left(z \Phi_{n}(z)- w\tau_{n}(w)\,\Phi_{n}^{\ast}(z) \right), 
  \end{array}
\end{equation}
with
\begin{equation}\label{defTau}
\tau_{n}(w) = \overline{ \left(\frac{ \varphi_{n}^{\ast}(w)}{  \varphi_{n}(w)  }\right)} =   \frac{ \Phi_{n}(w)}{   \Phi_{n}^{\ast}(w)  }\in \T.
\end{equation}
These polynomials have an additional feature: they satisfy a bona fide three-term recurrence relation (see \cite[Thm.\,2.1]{Costa-Felix-Ranga-JAT2013}). A deeper analysis of sequences of para-orthogonal polynomials satisfying three term recurrence relations was carried out in \cite{BracRangaSwami-2015}.

In what follows, without loss of generality we will assume $w=1$, and write $\tau_{n} $ instead of $\tau_{n}(1) $. When the freedom in the selection of the parameter $w$ will be relevant, it will be explicitly discussed.

Paper  \cite{Costa-Felix-Ranga-JAT2013} also contained a convenient ``symmetrization'' of the polynomials $K_{n}(1,z)$; it was shown there that the appropriately normalized $K_{n}(1,z)$, that we denote by $R_n$ (see the precise definition in Section \ref{sec:CDandPositiveCS})  satisfy  a three term recurrence formula of the form 
\begin{equation}  \label{Eq-TTRR-Rn}
   R_{n+1}(z) = \left[(1+ic_{n+1})z + (1-ic_{n+1})\right] R_{n}(z) - 4d_{n+1} z R_{n-1}(z), \quad n\geq 0,
\end{equation}
with $R_{-1}(z) = 0$ and $R_{0}(z) = 1$. 
This can be considered as a generalization of the results found in Delsarte and Genin  \cite{DelsarteGenin-1986, DelsarteGenin-1988}.

Sequences   $\{c_n\}_{n=1}^{\infty}$ and $\{d_{n+1}\}_{n=1}^{\infty}$ are both real, and $\{d_{n+1}\}_{n=1}^{\infty}$ has an additional useful feature: it is a \emph{positive chain sequence}.  In other words, there exists a sequence $ \{g_n\}_{n\geq 1}$ with $0\leq g_1< 1$ and $0<g_n<1 $ for $n\geq 2$, called a \textit{parameter  sequence} for $\{d_{n+1}\}_{n=1}^{\infty}$, such that
\begin{equation}\label{def:CS}
d_{n+1} = (1-g_{n})g_{n+1}, \quad n \geq 1.
\end{equation}
We can have either a unique or an infinite number of parameter sequences corresponding to a given positive chain sequence. The value $g_1=0$ used in \eqref{def:CS} gives us the \textit{minimal parameter sequence} for $\{d_{n+1}\}_{n=1}^{\infty}$. The largest value $M_1$ that $g_1$ can assume such that the sequences $\{g_n\}_{n\geq 1}$ calculated from \eqref{def:CS} is still a parameter sequences, gives rise to the \textit{maximal parameter sequence} $\{M_n\}_{n\geq 1}$. Thus, the parameter sequence for $\{d_{n+1}\}_{n=1}^{\infty}$ is unique if and only if $M_1=0$; otherwise any value of $0\leq g_1\leq M_1$ generates a parameter  sequence, and we say that $\{d_{n+1}\}_{n=1}^{\infty}$ is a {\em non single parameter} (or non {SP}) positive chain sequence.
 
This notion makes sense also for finite sequences $\{d_{n+1}\}_{n=1}^{N}$, in which case $0\leq g_1< 1$,  $0<g_n<1 $ for $2 \leq n \leq N+1$ and we speak about \textit{finite positive chain sequences}, see \cite{IsmailLi-1992}.

The theory of chain sequences was introduced by Wall \cite{Wall-Book}, and the special version of {\em positive} chain sequences has been thoroughly explored by Chihara  and many others for studying the properties  of orthogonal polynomials defined on bounded intervals of the real line.  For many of the basic properties about  positive chain sequences we refer to Chihara \cite{Chihara-Book}.

Sequences  $\{c_n\}_{n=1}^{\infty}$ and $\{d_{n+1}\}_{n=1}^{\infty}$ can be easily calculated from the Verblunsky coefficients of the corresponding measure $\mu$ on $\T$ (and viceversa), providing an alternative parametrization of a family of measures related to $\mu$ by mass addition at $z=1$, see \cite{BracRangaSwami-2015,Castillo-Costa-Ranga-Veronese-JAT2014,Costa-Felix-Ranga-JAT2013} and Section \ref{sec:CDandPositiveCS} for further details.    

The zeros of $R_n$, generated by (\ref{Eq-TTRR-Rn}), have a strong resemblance with zeros of orthogonal polynomials on the real line: they belong to $\T$, are simple and interlace. In other words, if we denote the zeros of $R_n$ by $z_{n,j} = e^{i\theta_{n,j}}$, $j =1, 2, \ldots, n$, then (see e.g.~\cite{Dimitar-Ranga-MN2013}) we can number them in such a way that
\begin{equation} \label{Eq-InterlacingZeros-Rn}
    0 < \theta_{n+1,1} < \theta_{n,1} < \theta_{n+1,2} < \cdots < \theta_{n,n} < \theta_{n+1,n+1} < 2 \pi, \quad n \geq 1. 
\end{equation} 

This property and the three terms recurrence relation \eqref{Eq-TTRR-Rn} are an additional tool, missing for more general families of POPUC, that allows us to establish more precise results about the support of the orthogonality measure $\mu$ in terms of the zeros of $\{R_n\}_{n=0}^{\infty}$. In particular, we study the extremal zeros $z_{n,1}$ and $z_{n,n}$ and their asymptotic behavior, establishing bounds for the support of $\mu$ in terms of the sequences  $\{c_n\}_{n=1}^{\infty}$ and $\{d_{n+1}\}_{n=1}^{\infty}$ (and indirectly, in terms of the Verblunsky coefficients).  Considering para-orthogonal polynomials obtained from the rotated measures we also get information about  bounds for any gap within the support of the original measure.

\begin{definition}\label{def:scalingsequence}
	A positive sequence (finite or infinite) $\{q_{n+1}\}_{n=1}^N$ is called a \textit{scaling sequence} for the positive chain sequence $\{d_{n+1}\}_{n=1}^N$ if $q_{n+1} \leq 1$ for $1 \leq n \leq N$ and $\{ d_{n+1}/q_{n+1}\}_{n=1}^N$ is also a positive chain sequence.
\end{definition}
Obviously, $\{q_{n+1}\}_{n=1}^N$ with $q_{n+1}=1$ for $n=1,\dots, N$, is a (trivial) scaling sequence for any positive chain sequence. 

For convenience, we introduce here some additional notation used throughout this manuscript. Open and closed arcs on the unit circle are denoted by
\begin{equation}\label{notationArcs}
\mathcal{A}(\vartheta_1, \vartheta_2)=\{ e^{i \theta}:\, \vartheta_1 < \theta < \vartheta_2   \} \quad \mbox{and} \quad  \mathcal{A}[\vartheta_1, \vartheta_2]=\{ e^{i \theta}:\, \vartheta_1 \le \theta \le \vartheta_2   \},
\end{equation}
for $0 < \vartheta_2 - \vartheta_1 \leq 2\pi$, respectively. Additionally, for $a, b\in \mathbb R$ and $q\in [0,1]$ we denote by $u^{(-)}(a,b,q)<u^{(+)}(a,b,q)$ the roots of the quadratic equation
\begin{equation}\label{mainpolynomial}
P(u):=(1-q) u^2 -(a+b)u + ab -q=0.
\end{equation}
It is straightforward to check that for $q\in [0,1)$ these roots are real and finite; for $q=1$ we assume $u^{(\pm)}(a,b,q)=\pm\infty$ when $\pm(a+b)\geq 0$.

The main results are presented in Sections \ref{Sec-BEZeros} and \ref{Sec-BSupportM}. One of them can be stated as follows:

\begin{theo} \label{Thm-Restriction-1-for-cn-dn2}
	Let $\mu$ be a nontrivial probability Borel measure on $\T$, being   $\{c_n\}$,  $\{d_{n+1}\}$  the coefficients of the corresponding recurrence relation \eqref{Eq-TTRR-Rn}. Given a scaling sequence $\{q_{n+1}\}$ for $\{d_{n+1}\}$, we define 
	\begin{equation} \label{alternativeU}
	u_{n+1}^{(\pm)}=u^{(\pm)}(c_n,c_{n+1},q_{n+1}), \quad n\geq 1. 
\end{equation}
	If  
	$$
0 \leq\vartheta_1 :=2 \arccot\left( \sup_{n\geq 1} u_{n+1}^{(+)}\right) < \vartheta_2 :=2 \arccot\left( \inf_{n\geq 1} u_{n+1}^{(-)}\right)\leq 2\pi,
	$$
then
\[   
   \supp (\mu- \mu(\{1\})\delta_1) \subseteq \mathcal{A}[\vartheta_1, \vartheta_2] .
\]
Here, $\delta_1$ is the Dirac delta at $z=1$.  
\end{theo}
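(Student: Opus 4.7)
The plan is to reduce the problem to the real line via the bijection $\theta \in (0, 2\pi) \leftrightarrow u = \cot(\theta/2) \in \R$, which is decreasing and maps the target arc $\mathcal{A}[\vartheta_1, \vartheta_2]$ to the closed interval $[u_{**}, u_*]$, where $u_* := \sup_n u_{n+1}^{(+)}$ and $u_{**} := \inf_n u_{n+1}^{(-)}$. Writing $z = e^{i\theta}$, multiplying \eqref{Eq-TTRR-Rn} by $z^{-(n+1)/2}$, and using $\cos(\theta/2) - c\sin(\theta/2) = \sin(\theta/2)(u - c)$ together with $1/\sin^2(\theta/2) = 1 + u^2$, a direct calculation shows that the normalization
\[
h_n(u) := \frac{e^{-in\theta/2}\, R_n(e^{i\theta})}{(2\sin(\theta/2))^n}
\]
defines a real polynomial of degree $n$ with $h_0 = 1$, $h_1 = u - c_1$, obeying the real three-term recurrence
\[
h_{n+1}(u) = (u - c_{n+1})\, h_n(u) - d_{n+1}(1 + u^2)\, h_{n-1}(u), \quad n \geq 1.
\]
Since $R_n(1) > 0$ for all $n$ (a standard consequence of the chain sequence structure), the $n$ zeros of $R_n$ on $\T$ correspond bijectively to the $n$ real zeros of $h_n$, and it suffices to prove that all real zeros of every $h_n$ lie in $[u_{**}, u_*]$.

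The algebraic key is the factorization
\[
P(u;a,b,q) = (u-a)(u-b) - q(1+u^2),
\]
which is immediate from \eqref{mainpolynomial}. Evaluating at $u=c_n$ gives $P(c_n;c_n,c_{n+1},q_{n+1}) = -q_{n+1}(1+c_n^2) \leq 0$, so the opening-upward parabola (for $q_{n+1} < 1$; the linear case $q_{n+1}=1$ is handled by the conventions for $u^{(\pm)}$) forces $c_n, c_{n+1} \in [u_{n+1}^{(-)}, u_{n+1}^{(+)}]$. Consequently, for every $u \notin [u_{**}, u_*]$ the factors $u-c_n$ and $u-c_{n+1}$ share a common sign, their product is strictly positive, and
\[
(u - c_n)(u - c_{n+1}) > q_{n+1}(1 + u^2), \qquad n \geq 1,
\]
uniformly in $n$.

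The heart of the proof is an induction on $n$ tracking the ratio
\[
A_n(u) := \frac{h_n(u)}{(u - c_n)\, h_{n-1}(u)}.
\]
Fix any parameter sequence $\{g_n\}$ for the positive chain sequence $\{d_{n+1}/q_{n+1}\}$, so that $d_{n+1}/q_{n+1} = (1-g_n)g_{n+1}$. The recurrence rearranges into
\[
A_{n+1} = 1 - \frac{d_{n+1}(1+u^2)}{(u-c_n)(u-c_{n+1})\, A_n},
\]
and the quadratic inequality above yields
\[
A_{n+1} > 1 - \frac{(1-g_n)\, g_{n+1}}{A_n}.
\]
The miracle is that $x = 1 - g_n$ is precisely the fixed point of the Wall-type map $x \mapsto 1 - (1-g_n)g_{n+1}/x$, so the invariant $A_n \geq 1 - g_n > 0$ propagates: the base case $A_1 = 1 \geq 1 - g_1$ holds trivially, and if $A_n \geq 1 - g_n$ then monotonicity gives $A_{n+1} > 1 - g_{n+1}$. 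With $A_n > 0$ throughout and $\operatorname{sign}(u - c_k)$ constant on each component of $\R \setminus [u_{**}, u_*]$, an immediate sign count from $h_n = A_n(u - c_n) h_{n-1}$ and $h_0 = 1$ shows $h_n(u) \neq 0$ on $\R \setminus [u_{**}, u_*]$.

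All zeros of every $R_n$ therefore lie in $\mathcal{A}[\vartheta_1, \vartheta_2]$, and the theorem follows by invoking the correspondence between strong limit points of zeros of the CD-kernel-based para-orthogonal polynomials $R_n$ and $\supp(\mu - \mu(\{1\})\delta_1)$, established in Section \ref{sec:CDandPositiveCS} (cf.\ \cite{CantMoraVela-JAT2006, Costa-Felix-Ranga-JAT2013}). The main obstacle is identifying the correct invariant $A_n \geq 1 - g_n$; once one recognizes the factorization $P(u) = (u-a)(u-b) - q(1+u^2)$ and notices that $1-g$ is a fixed point of the associated Wall iteration, the chain-sequence and scaling-sequence structures combine cleanly. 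The only remaining care is for the limiting cases $q_{n+1} = 1$ (where $u^{(\pm)}_{n+1}$ is infinite) and for ensuring that the argument is insensitive to the choice of parameter sequence.
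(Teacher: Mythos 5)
Your proof is correct, and it reaches the conclusion by a route that is structurally the same as the paper's but noticeably more self-contained in the middle. The paper passes through the Delsarte--Genin variable $x=\cos(\theta/2)$ and a three-step chain: Theorem \ref{Thm-General-Zero-Bounds-Wn} (an \emph{iff} characterization of zero confinement in terms of $\{\fd_{n+1}(x)\}$ being a finite positive chain sequence, proved via minimal parameter sequences and the interlacing \eqref{Eq-InterlacingZeros-Wn}), then Lemma \ref{lemma:inequality} together with Chihara's comparison test to verify that condition at the endpoints $A_N,B_N$, and finally Theorems \ref{Thm-Estimates-for-Zero-Bounds-Wn}, \ref{Thm-Estimate-1-for-Support} and \ref{ThmA-Support}. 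You instead work directly in $u=\cot(\theta/2)$, where the same quadratic $P(u)=(u-a)(u-b)-q(1+u^2)$ of \eqref{mainpolynomial} appears, and you replace both the comparison test and the positivity-of-tails argument by a single induction on the invariant $A_n\geq 1-g_n$, with $\{g_n\}$ a parameter sequence of $\{d_{n+1}/q_{n+1}\}$ --- this is in effect an inline proof of Wall's comparison theorem, and it lets you conclude $h_n(u)\neq 0$ pointwise off $[u_{**},u_*]$ without ever invoking interlacing. What the paper's route buys is the reusable two-sided characterization (Theorem \ref{Thm-General-Zero-Bounds-Wn} is an equivalence, also used for Corollary \ref{Coro-General2-Zero-Bounds-Wn} and Theorem \ref{Thm-Restriction-3-for-cn-dn}); what yours buys is brevity and independence from the cited chain-sequence machinery. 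Two small points to tidy in a final write-up: (i) your argument places the zeros in the \emph{closed} arc $\mathcal{A}[\vartheta_1,\vartheta_2]$, whereas Theorem \ref{ThmA-Support} is stated for zeros in an open arc, so one should apply it to $\mathcal{A}(\vartheta_1-\epsilon,\vartheta_2+\epsilon)$ and let $\epsilon\to 0$ (the paper is equally terse here); (ii) the degenerate cases $u_*=+\infty$ or $u_{**}=-\infty$ deserve the one line you allude to, since then one of the two components of $\R\setminus[u_{**},u_*]$ is empty and the corresponding endpoint of the arc is $0$ or $2\pi$.
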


\begin{remark}
It will become clearer later, the smaller the scaling sequence $q_{n+1}$ is, the smaller the size of the arc $\mathcal{A}[\vartheta_1, \vartheta_2]$; the trivial choice $q_{n+1}\equiv 1$ gives the worst estimate for $\supp (\mu)$.  
\end{remark}

In the next section it will be shown that the coefficients of \eqref{Eq-TTRR-Rn} can be obtained directly from the Verblunsky coefficients $\{\alpha_n\}$ of $\mu$. Thus, the theorem above gives indirectly a bound for $\supp \mu$ in terms of the  $ \alpha_n $'s. It can be seen also as a statement about the gap in $\supp(\mu)$ around $z=1$. More generally, we can locate a gap using a construction starting from the  $\{\alpha_n\}_{n=0}^{\infty}$ of a  nontrivial positive measure  $\mu$ on $\T$. Namely, for a parameter $0 <  \vartheta  \leq 2 \pi$, let the sequence $\{\tau_{n}^{(\vartheta )}\}_{n=0}^{\infty}$ be given recursively by 
\[
\tau_{0}^{(\vartheta )} = e^{i\vartheta } \quad \mbox{and} \quad 
\tau_{n}^{(\vartheta )} = e^{i\vartheta }\,\tau_{n-1}^{(\vartheta )} \, \frac{1 - \overline{\tau_{n-1}^{(\vartheta )}\alpha_{n-1}}}{1 - \tau_{n-1}^{(\vartheta )}\alpha_{n-1}}, \quad n \geq 1.
\] 
Having $\{\tau_{n}^{(\vartheta )}\}_{n=0}^{\infty}$ we define  $\{c_n^{(\vartheta )}\}_{n=1}^{\infty}$ and $\{d_{n+1}^{(\vartheta )}\}_{n=1}^{\infty}$ by
\begin{equation*}
\dsp c_{n}^{(\vartheta )} = \frac{-\Im (\tau_{n-1}^{(\vartheta )}\alpha_{n-1})} {1-\Re(\tau_{n-1}^{(\vartheta )}\alpha_{n-1})}, \quad g_{n}^{(\vartheta )} = \frac{1}{2} \frac{\big|1 - \tau_{n-1}^{(\vartheta )} \alpha_{n-1}\big|^2}{\big[1 - \Re(\tau_{n-1}^{(\vartheta )}\alpha_{n-1})\big]}, \quad n \geq 1,  
\end{equation*}
and let
\begin{equation*}
 d_{n+1}^{(\vartheta )} = (1-g_{n}^{(\vartheta )})g_{n+1}^{(\vartheta )}.
\end{equation*}
Finally, for $x\in [-1,1]$ we set
\[
\mathfrak d_{n+1}^{(\vartheta)}(x) = \frac{d_{n+1}^{(\vartheta)}}{\big(x - c_{n}^{(\vartheta)}\sqrt{1-x^2}\big)\big(x - c_{n+1}^{(\vartheta)}\sqrt{1-x^2}\big) }. 
\]

\begin{theo} \label{Thm-Restriction-3-for-cn-dn}
	Let  $\{\alpha_n\}_{n=0}^{\infty}$  be the Verblunsky coefficients for a  nontrivial positive measure  $\mu$ on the unit circle, and let $0 <  \vartheta_2 - \vartheta_1 \leq 2 \pi$ (with $\vartheta_2 > 0$). 
With the definitions introduced above, set 
\[ 
  \mathfrak{m}_0^{(\vartheta_1, \vartheta_2)} = 0\quad \text{and} \quad 
	\dsp  \mathfrak{m}_n^{(\vartheta_1, \vartheta_2)} = \frac{\mathfrak d_{n+1}^{(\vartheta_2)}\big(x(\vartheta_1,\vartheta_2)\big)}{1-\mathfrak{m}_{n-1}^{(\vartheta_1, \vartheta_2)}}, \quad n \geq 1,
\]
with
\[
   x(\vartheta_1,\vartheta_2):=\cos\big((2\pi+\vartheta_1-\vartheta_2)/2\big).
\]
	Then 
\[
	\supp \mu  \cap \mathcal{A}(\vartheta_1, \vartheta_2)  =\emptyset
\]
if and only if  
\[
	  \cot\big((2\pi+\vartheta_1-\vartheta_2)/2\big) < c_{1}^{(\vartheta )}\quad \mbox{and} \quad 0 < \mathfrak{m}_n^{(\vartheta_1, \vartheta_2)} < 1, \ \ n \geq 1.
\]
\end{theo}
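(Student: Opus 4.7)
The plan is to reduce the general gap problem to the special case of a gap adjacent to $z=1$ in a suitably rotated measure, and then invoke Theorem~\ref{Thm-Restriction-1-for-cn-dn2} with a specific \emph{extremal} scaling sequence that saturates the arc inequality. First I would verify that the map $\alpha_n\mapsto \tau_n^{(\vartheta_2)}\alpha_n$ is precisely the effect on Verblunsky coefficients of rotating $\mu$ by the angle $\vartheta_2$; consequently the sequences $\{c_n^{(\vartheta_2)}\}$ and $\{d_{n+1}^{(\vartheta_2)}\}$ defined in the statement are the coefficients of the three-term recurrence~\eqref{Eq-TTRR-Rn} for the rotated measure $\mu^{(\vartheta_2)}$. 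Under this rotation the open arc $\mathcal{A}(\vartheta_1,\vartheta_2)$ maps to the arc $\mathcal{A}(2\pi+\vartheta_1-\vartheta_2,\,2\pi)$ adjacent to $z=1$, so the gap condition is equivalent to $\supp \mu^{(\vartheta_2)}\subseteq \mathcal{A}[0,\,2\phi]$, where $\phi:=(2\pi+\vartheta_1-\vartheta_2)/2$ and hence $x(\vartheta_1,\vartheta_2)=\cos\phi$.

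Next I would identify the correct scaling sequence. Rewriting~\eqref{mainpolynomial} as $P(u)=(u-a)(u-b)-q(1+u^2)$ and parameterizing $u=\cot\phi$, $x=\cos\phi$, $y=\sin\phi$, one checks that $u=\cot\phi$ is a root of $P$ precisely when $q=(x-ay)(x-by)$. The natural choice is therefore
\[
q_{n+1}=(x-c_n^{(\vartheta_2)}y)(x-c_{n+1}^{(\vartheta_2)}y), \qquad n\geq 1,
\]
giving $d_{n+1}^{(\vartheta_2)}/q_{n+1}=\mathfrak{d}_{n+1}^{(\vartheta_2)}(x(\vartheta_1,\vartheta_2))$ and forcing $\cot\phi$ to be a root of each corresponding quadratic. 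Provided $\cot\phi$ is the smaller root $u_{n+1}^{(-)}$, the hypothesis $\inf_n u_{n+1}^{(-)}\geq \cot\phi$ of Theorem~\ref{Thm-Restriction-1-for-cn-dn2} is met with equality, producing exactly $\vartheta_2'=2\phi$. For this $\{q_{n+1}\}$ to be a legitimate scaling sequence, $\{\mathfrak{d}_{n+1}^{(\vartheta_2)}(x(\vartheta_1,\vartheta_2))\}$ must be a positive chain sequence; by Wall's criterion this holds iff its minimal parameter sequence $\{\mathfrak{m}_n\}$ (generated with $\mathfrak{m}_0=0$ by the very recursion stated in the theorem) stays strictly in $(0,1)$ for every $n\geq 1$. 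The extra condition $\cot\phi<c_1^{(\vartheta_2)}$ is not captured by the quadratics (which only involve indices $n\geq 1$): it is equivalent to placing the unique zero of $R_1^{(\vartheta_2)}$ — located at angle $\alpha$ with $c_1^{(\vartheta_2)}=\cot(\alpha/2)$ — strictly inside $\mathcal{A}(0,2\phi)$, i.e.\ inside the projected allowed support.

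Assembling this, the \emph{if} direction follows from Theorem~\ref{Thm-Restriction-1-for-cn-dn2} applied to $\mu^{(\vartheta_2)}$ with the above scaling sequence, de-rotating and observing that any possible mass point of $\mu^{(\vartheta_2)}$ at $z=1$ corresponds to a mass of $\mu$ at $e^{i\vartheta_2}\notin\mathcal{A}(\vartheta_1,\vartheta_2)$. For the converse I would work directly with~\eqref{Eq-TTRR-Rn} at the critical point $z=e^{2i\phi}$: a short induction shows
\[
\frac{R_n^{(\vartheta_2)}(e^{2i\phi})}{R_{n-1}^{(\vartheta_2)}(e^{2i\phi})}=2e^{i\phi}\,\big(x-c_n^{(\vartheta_2)}y\big)\,\big(1-\mathfrak{m}_{n-1}\big), \qquad n\geq 1.
\]
The gap hypothesis, combined with the interlacing~\eqref{Eq-InterlacingZeros-Rn}, forces $R_n^{(\vartheta_2)}(e^{2i\phi})\neq 0$ and $\theta_{n,n}^{(\vartheta_2)}<2\phi$ for every $n$, which pins down the signs of these ratios and of the factors $(1-\mathfrak{m}_{n-1})$, yielding both $c_1^{(\vartheta_2)}>\cot\phi$ and $0<\mathfrak{m}_n<1$.

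The main obstacle is the converse direction: translating the geometric statement \textit{no zero of any $R_n^{(\vartheta_2)}$ exceeds $2\phi$} into the algebraic inequalities on $\mathfrak{m}_n$. It requires controlling signs uniformly through the recursion, ensuring that the minimal parameter sequence never approaches $1$ from below, and ruling out the degenerate case in which some zero of $R_n^{(\vartheta_2)}$ coincides with $e^{2i\phi}$. A Sturm-type comparison between $R_n^{(\vartheta_2)}$ and an extremal polynomial whose $n$-th zero sits exactly at $e^{2i\phi}$ should supply the missing strict inequalities.
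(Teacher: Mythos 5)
Your reduction is the same as the paper's: rotate $\mu$ by $2\pi-\vartheta_2$ so that the putative gap becomes the arc $\mathcal{A}(2\pi+\vartheta_1-\vartheta_2,\,2\pi)$ adjacent to $z=1$, check via $\tilde{\tau}_{n-1}\tilde{\alpha}_{n-1}=\tau_{n-1}^{(\vartheta_2)}\alpha_{n-1}$ that the rotated measure has recurrence coefficients $c_n^{(\vartheta_2)},d_{n+1}^{(\vartheta_2)}$, and recognize that $0<\mathfrak{m}_n^{(\vartheta_1,\vartheta_2)}<1$ is exactly Wall's criterion for $\{\mathfrak d_{n+1}^{(\vartheta_2)}(x(\vartheta_1,\vartheta_2))\}$ to be a positive chain sequence. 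Where you diverge is in how the equivalence is closed, and here both halves of your argument have problems. The paper simply applies Theorem~\ref{Thm-General-Zero-Bounds-Wn} --- which is already an \emph{if and only if} statement --- at the single point $A=\cos\big((2\pi+\vartheta_1-\vartheta_2)/2\big)$, together with Theorem~\ref{Thm-Zeros-in-Gaps}(b) (gap in $\supp\mu$ $\Rightarrow$ zeros of $\tilde R_n$ avoid the closed complementary arc) and Theorem~\ref{ThmA-Support} (zeros confined to the arc $\Rightarrow$ gap in the support). Your ``main obstacle,'' the converse direction, is therefore not an obstacle at all: the ratio identity you write down and the sign bookkeeping you propose are precisely the content of the ``only if'' half of Theorem~\ref{Thm-General-Zero-Bounds-Wn}, already proved in Section~4; no Sturm-type comparison is needed, and as written your converse remains an acknowledged, unexecuted sketch.

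The sufficiency direction also has a concrete flaw. You route it through Theorem~\ref{Thm-Restriction-1-for-cn-dn2} with the extremal choice $q_{n+1}=(x-c_n^{(\vartheta_2)}y)(x-c_{n+1}^{(\vartheta_2)}y)$, but Definition~\ref{def:scalingsequence} requires $q_{n+1}\le 1$, and Lemma~\ref{lemma:inequality} requires $q\in[0,1]$; the quantity $h(x;c_n^{(\vartheta_2)},c_{n+1}^{(\vartheta_2)})$ can exceed $1$ (e.g.\ $x=0$, $c_n^{(\vartheta_2)}=c_{n+1}^{(\vartheta_2)}=10$ gives $h=100$) even in situations compatible with the hypotheses of the theorem, so your chosen $\{q_{n+1}\}$ need not be a scaling sequence and Theorem~\ref{Thm-Restriction-1-for-cn-dn2} cannot be invoked. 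You also assert without proof that $\cot\phi$ is the \emph{smaller} root $u_{n+1}^{(-)}$ of each quadratic; this does hold once one knows $\cot\phi<c_n^{(\vartheta_2)}$ for all $n$ (since $u_{n+1}^{(+)}>\max\{c_n^{(\vartheta_2)},c_{n+1}^{(\vartheta_2)}\}$), but that inequality for all $n$ is itself a consequence of conditions a) and b) of Theorem~\ref{Thm-General-Zero-Bounds-Wn}, not something you may assume at the outset. Both difficulties disappear if you bypass the scaling-sequence machinery and apply Theorem~\ref{Thm-General-Zero-Bounds-Wn} directly: condition a) at $A=\cos\phi$ is $\cot\phi<c_1^{(\vartheta_2)}$ and condition b) is the chain-sequence property of $\{\fd_{n+1}^{(\vartheta_2)}(A)\}$, i.e.\ $0<\mathfrak{m}_n^{(\vartheta_1,\vartheta_2)}<1$, which is exactly the statement to be proved.
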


The paper is organized as follows. Sections \ref{sec:CDandPositiveCS} and \ref{Sec-PrelimResults-3} are a brief summary of main results from \cite{BracMcCabPerezRanga-MCOM2015}, \cite{Castillo-Costa-Ranga-Veronese-JAT2014}, \cite{Costa-Felix-Ranga-JAT2013} and \cite{Dimitar-Ranga-MN2013}, necessary  for a better understanding of the rest of the manuscript.  Section \ref{Sec-BEZeros} describes a simple techniques for finding bounds for the extreme zeros of the polynomials $R_n$ from the coefficients $\{c_n\}$ and $\{d_{n+1}\}$, while Section \ref{Sec-BSupportM} relates these bounds with the support of the measure. Section \ref{Sec-SORPCS} contains some further results on scaling sequences, used in the construction of Example 3 in Section \ref{Sec-Examples}.  Examples 1 and 2 from Section \ref{Sec-Examples} illustrate with explicit formulas the tightness of the bounds obtained in this paper.

\setcounter{equation}{0}
\section{CD kernel POPUC and positive chain sequences} \label{sec:CDandPositiveCS}
 
In this section we gather some of the main results from \cite{Costa-Felix-Ranga-JAT2013}, providing a necessary background for the rest of the paper.

The CD kernel $K_{n}(w,z)$ of a non-trivial probability measure $\mu$ on $\T$ was introduced in \eqref{def:CDkernel}. For $w\in \T$ fixed, this is a polynomial of degree $n+1$ in $z$, which can be renormalized monic by defining 
\begin{equation*} \label{Eq-monic-CD-Kernel-POPUC}
P_n(w;\,z) :=  \frac{\kappa_{n+1}^{-2}\,\overline{w}}{\overline{\Phi_{n+1}(w)}} \frac{K_n(w,\,z)}{1 + \tau_{n+1}(w) \alpha_{n}} = \frac{z\Phi_{n}(z) - w\tau_{n}(w) \Phi_{n}^{\ast}(z)}{z-w}, \quad n\geq 0 ,
\end{equation*}
where $\{\alpha_n\}_{n\geq 0}$ are the Verblunsky coefficients for $\mu$, and $\tau_n(w)$ is given by \eqref{defTau}.

In what follows we fix $w=1$, and set $R_0(z)=1$, 
\begin{equation}\label{Eq-CDKernel-POPUC}
R_{n}(z) = \left(\prod_{j=1}^{n} \frac{  1- \tau_{j-1}\alpha_{j-1} }{   1-\Re(\tau_{j-1}\alpha_{j-1}) }\right) \,P_{n}(1;\,z),  \quad n \geq 1,
\end{equation}
with $\tau_j = \tau_j(1)$, $j \geq 0$. It is important to notice that if we know the Verblunsky coefficients for $\mu$, $\tau_j$'s can be easily computed recursively by
\begin{equation}\label{recurrenceC}
\tau_{n} = \frac{\tau_{n-1} - \overline{\alpha}_{n-1}}{1 - \tau_{n-1}\alpha_{n-1}},  \quad n \geq 1,
\end{equation}
starting with $\tau_0=1$.

It turns out that the sequence  of polynomials $\{R_{n}\}_{n\geq 0}$ in \eqref{Eq-CDKernel-POPUC} satisfies the three term recurrence formula (\ref{Eq-TTRR-Rn}). The coefficients $c_n$ are explicitly given in terms of $\alpha_n$'s by
\begin{equation}\label{cSeq}
\dsp c_{n} = \frac{-\Im (\tau_{n-1}\alpha_{n-1})} {1-\Re(\tau_{n-1}\alpha_{n-1})} \in \R, \quad n\geq 1.
\end{equation}
It was mentioned also that $\{d_{n+1}\}_{n=1}^{\infty}$ is a positive chain sequence: it satisfies \eqref{def:CS} with the parameter sequence  $\{g_{n}\}_{n=1}^{\infty}$,   
\begin{equation}\label{parametricSeq}
	  g_{n} = \frac{1}{2} \frac{\big|1 - \tau_{n-1} \alpha_{n-1}\big|^2}{\big[1 - \Re(\tau_{n-1}\alpha_{n-1})\big]}, \quad n \geq 1.
\end{equation}

Polynomials $R_n$, $n \geq 1$ satisfy 
\begin{equation*} 
\int_{\mathbb{T}} \zeta^{-n+j} R_{n}(\zeta) (1 - \zeta) d \mu(\zeta) = 0, \quad  0 \leq j \leq n-1.
\end{equation*}
Observe that this relation remains unaltered by an addition to $\mu$ of a Dirac measure (or mass point) at $\zeta=1$ (so-called \textit{Uvarov transformation}). This fact induces us to expect that $\{R_n(z)\}_{n=0}^{\infty}$ are invariant under such a transformation. This is the case indeed. Assume that $\mu(\{1\})=a\in [0,1)$; then 
\begin{equation} \label{Eq-Family-of-Measures}
 \mu(t;\cdot)=\frac{1-t}{1-a}\mu  +  \frac{t - a}{1-a} \delta_1 , \quad t\in [0,1),
\end{equation}
is a nontrivial positive probability Borel measure on $\T$ such that $\mu(t;\{1\})=t$. Let us denote by  $\{\Phi_n(t; z)\}_{n=0}^{\infty}$ and $\{\alpha_n^{(t)}\}_{n=0}^{\infty}$ the corresponding monic OPUC and Verblunsky coefficents for the measure $\mu(t;.      )$; obviously, they depend on the parameter $t$.

A remarkable fact, first observed in \cite{Costa-Felix-Ranga-JAT2013},  is that the sequence of polynomials $\{R_n(z)\}_{n=0}^{\infty}$ (and thus, $\{P_n(1; z)\}_{n=0}^{\infty}$), and the sequences of constants $\{\tau_n\}_{n=0}^{\infty}$, $\{c_n\}_{n=1}^{\infty}$ and $\{d_{n+1}\}_{n=1}^{\infty}$, do not depend on $t$ and are exactly the same for all measures $\{\mu(t; \cdot) , \,0 \leq t < 1\}$. 
This is not the case  however of the parameter  sequence $g_n$, defined in \eqref{parametricSeq}: if 
\[
    d_{n+1} = (1-g_n^{(t)})\, g_{n+1}^{(t)} \quad \mbox{and} \quad    g_{n}^{(t)} = \frac{1}{2} \frac{\big|1 - \tau_{n-1} \alpha_{n-1}^{(t)}\big|^2}{ 1 - \Re(\tau_{n-1}\alpha_{n-1}^{(t)}) }, \quad n \geq 1,
\]
then  the sequence $\{g_{n+1}\}_{n=0}^{\infty} = \{g_{n+1}^{(t)}\}_{n=0}^{\infty}$ does depend on the value of $t$.  Thus, $\{g_{n+1}^{(t)}\}_{n=0}^{\infty}$ for each $t$ represents a different parameter sequence of the positive chain sequence $\{d_{n+1}\}_{n=1}^{\infty}$, which is necessarily non {SP}. It is easy to show that 
$$
g_{1}^{(t)}=\frac{1}{2} \frac{\big|1 -   \alpha_{0}^{(t)}\big|^2}{\big[1 - \Re( \alpha_{0}^{(t)})\big]}
$$
is decreasing in $t$, so that   $\{g_{n+1}^{(0)}\}_{n=0}^{\infty} = \{M_{n+1}\}_{n=0}^{\infty}$ is the maximal parameter sequence of $\{d_{n+1}\}_{n=1}^{\infty}$. 

The polynomials $R_n$ satisfy the orthogonality relation
\begin{equation} \label{Eq-Rn-Ortogonality}
     \int_{\mathbb{T}} \zeta^{-n+j} R_{n}(\zeta) (1 - \zeta) d \mu(t;\zeta) = 
     \left\{
      \begin{array}{ll}
        0, & 0 \leq j \leq n-1, \\
        \gamma_{n}^{(t)}, & j =n,
      \end{array}
     \right.
\end{equation}
with
\[
    \gamma_{n}^{(t)} = (1- \overline{\tau_n \alpha_n^{(t)}}) \prod_{k=0}^{n-1} \frac{(1-\overline{\tau_k \alpha_k^{(t)}})(1 - |\alpha_k^{(t)}|^2)}{1 - \Re(\tau_{k} \alpha_{k}^{(t)})} = \frac{2(1-t)M_{1}}{1+ic_1} \prod_{k=1}^{n} \frac{4 d_{k+1}}{1 + i c_{k+1}}. 
\]

Sequences $\{c_n\}_{n=1}^{\infty}$ and $\{d_{n+1}\}_{n=1}^{\infty}$ give a parametrization of the family \linebreak $\{\mu(t; \cdot), \, 0 \leq t < 1\}$, in the same vein as $\{\alpha_n\}_{n\geq 0}$ parametrize $\mu$. For convenience, we call it the \textit{$(c_n, d_n)$-parametrization} of this family of measures. This result can be seen also as a unit circle analogue of the classical Favard theorem on $\R$ (see \cite{Castillo-Costa-Ranga-Veronese-JAT2014}).   Indeed, from the results established in \cite{Castillo-Costa-Ranga-Veronese-JAT2014} and \cite{Costa-Felix-Ranga-JAT2013}, if $\{c_n\}_{n=1}^{\infty}$  is any real sequence, and  $\{d_{n+1}\}_{n=1}^{\infty}$ is  a non SP positive chain sequence, then for each $t\in [0,1]$ there exists a unique positive probability Borel measure $\mu(t; \cdot)$ on $\T$ and the corresponding sequence of its Verblunsky coefficients $\{\alpha_{n}^{(t)}\}_{n\geq 0}$ such that $c_n$'s and $d_n$'s are calculated from them by formulas \eqref{cSeq} and \eqref{parametricSeq}, and $\mu(t;\{1\})=t$. Furthermore, if we denote by  $\{M_{n}\}_{n=1}^{\infty}$ the maximal parameter sequence of $\{d_{n+1}\}_{n=1}^{\infty}$, and by $\{m_n^{(t)}\}_{n=0}^{\infty}$ the  minimal parameter sequence of the positive chain sequence $\{d_{n}\}_{n=1}^{\infty}$ augmented by the term $d_1=(1-t)M_{1}$, then 
\begin{equation} \label{Eq-Recovery1-VErbCoeffs}
\alpha_{n-1}^{(t)} = \frac{1}{\tau_{n-1}}\,\frac{1-2m_{n}^{(t)} - i c_n}{1 - i c_n},  \quad n \geq 1,
\end{equation}
where the sequence  $\{\tau_n\}_{n=0}^{\infty}$ is calculated by (\ref{recurrenceC}) and satisfies
\[
\tau_{0}= 1, \quad \tau_{n} =  \tau_{n-1}\frac{1-ic_n}{1+ic_n}, \quad n \geq 1.
\]

One of the consequences of the result established in \cite[Corollary 4.19]{CantMoraVela-JAT2006} (see also the proof of Theorem 4.1 in \cite{Castillo-Costa-Ranga-Veronese-JAT2014}) is the following theorem:

\begin{theo}  \label{ThmA-Support}
	If for some $0 \leq \vartheta_1 < \vartheta_2 \leq 2\pi$ and for all sufficiently large $n$, the zeros of $R_n$ belong to the open arc  $\mathcal{A}(\vartheta_1, \vartheta_2)$, then for all $0 \leq t< 1$,
	\begin{equation}\label{emptyarcs}
\supp \mu(t;\cdot) \cap \mathcal{A}(0, \vartheta_1) = \supp \mu(t;\cdot) \cap \mathcal{A}(\vartheta_2, 2\pi)=\emptyset.
	\end{equation}

\end{theo}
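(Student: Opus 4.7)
The plan is to recognize the sequence $\{R_n\}_{n\ge 0}$ as a particular instance of the CD-kernel-based POPUC whose asymptotic zero behavior is controlled by the CMV-matrix framework of \cite{CantMoraVela-JAT2006}, and then invoke their Corollary 4.19 directly. The bulk of the work is conceptual: one must verify that the hypotheses of that corollary apply, and that they do so uniformly in the parameter $t\in[0,1)$. Since the theorem is explicitly advertised as a consequence of an existing result, no new technical idea is required beyond this identification.

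First I would recall that in \cite{CantMoraVela-JAT2006} a general class of POPUC is realized as characteristic polynomials of finite unitary truncations of the CMV matrix of the underlying measure, and that their Corollary 4.19 shows that the set of \emph{strong attracting points} of the zeros coincides with the support of the measure. Next I would exhibit $R_n$ as (a nonzero multiple of) the CD-kernel POPUC $P_n(1;z)$ for the measure $\mu(t;\cdot)$, as spelled out by \eqref{Eq-CDKernel-POPUC}; this places $R_n$ squarely in the CMV framework, so that its zero set approximates $\supp\mu(t;\cdot)$ in the strong-attractor sense. A key point, already noted in the paper, is that $R_n$ itself does \emph{not} depend on $t\in[0,1)$: the Verblunsky coefficients $\alpha_n^{(t)}$ and hence the CMV matrix do depend on $t$, but all these matrices produce the very same polynomial sequence $\{R_n\}$, so the attractor identification can be applied to each $\mu(t;\cdot)$ in the family separately.

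With this identification in hand, the deduction is immediate. Fix $t\in[0,1)$ and pick any $\xi\in\mathcal{A}(0,\vartheta_1)\cup\mathcal{A}(\vartheta_2,2\pi)$. If $\xi$ were in $\supp\mu(t;\cdot)$, the attractor characterization would force $\xi$ to be an accumulation point of the zero set of $\{R_n\}$, contradicting the hypothesis that the zeros of $R_n$ lie in the open arc $\mathcal{A}(\vartheta_1,\vartheta_2)$ for all sufficiently large $n$. Hence \eqref{emptyarcs} holds.

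The main obstacle is the careful translation of Corollary 4.19 of \cite{CantMoraVela-JAT2006} into the language of the present paper, especially checking that the characterization holds uniformly in $t$ even though the sequence $\{R_n\}$ is the same for every member of the family. A self-contained alternative that bypasses the CMV formalism is the following: the para-orthogonality \eqref{Eq-Rn-Ortogonality} yields a Gauss-type quadrature at the zeros $z_{n,j}$ of $R_n$ with positive weights $\lambda_{n,j}^{(t)}>0$, exact against $(1-\zeta)\,d\mu(t;\zeta)$ for Laurent polynomials of a suitable bidegree; density of Laurent polynomials in $C(\T)$ then forces the atomic measures $\sum_j\lambda_{n,j}^{(t)}\delta_{z_{n,j}}$ to converge weak-$\ast$ to $(1-\zeta)\,d\mu(t;\cdot)$. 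Since the zeros eventually avoid $\mathcal{A}(0,\vartheta_1)\cup\mathcal{A}(\vartheta_2,2\pi)$ and the factor $1-\zeta$ is nonzero there (as $1\notin\mathcal{A}(0,\vartheta_1)\cup\mathcal{A}(\vartheta_2,2\pi)$), the support of $\mu(t;\cdot)$ likewise avoids these two open arcs, giving \eqref{emptyarcs}.
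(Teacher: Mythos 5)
Your proposal matches the paper's own treatment: the paper gives no separate proof of this theorem, presenting it precisely as a consequence of Corollary 4.19 of \cite{CantMoraVela-JAT2006} (together with the proof of Theorem 4.1 in \cite{Castillo-Costa-Ranga-Veronese-JAT2014}), which is exactly the identification you carry out, including the observation that $\{R_n\}$ is the same for every $t\in[0,1)$. Your quadrature-based alternative is a reasonable self-contained substitute, but the main argument is essentially the paper's.
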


In particular, for the extreme zeros $z_{n,1} = e^{i\theta_{n,1}}$ and $z_{n,n} = e^{i\theta_{n,n}}$  of $R_n$,  we have that if 
$$
\vartheta_1 :=\lim_{n \to \infty} \theta_{n,1}, \quad \text{and} \quad  \vartheta_2: =\lim_{n \to \infty} \theta_{n,n},
$$
then \eqref{emptyarcs} holds, and 
$$
e^{i \vartheta_1}, e^{i \vartheta_2} \in  \supp \mu(t;\cdot).
$$ 

\begin{remark}
 Because of the interlacing of the zeros of $R_n$, $n\geq 1$, ``all  sufficiently large $n$'' in Theorem \ref{ThmA-Support} is also equivalent to saying ``all $n$''.  
\end{remark}

Our next goal will be to obtaining bounds for the extreme zeros $z_{n,1} $ and $z_{n,n}$ directly from the sequences $\{c_n\}$ and $\{d_{n+1}\}$.

\setcounter{equation}{0}

\section{Delsarte-Genin transformation} \label{Sec-PrelimResults-3}

It is  much more convenient to study the zeros of $\{R_n\}$ by their transplantation to the real line by means of the  functions $\{\mathcal{W}_n\}$  defined by 
\begin{equation} \label{Eq-for-Wn}
    \mathcal{W}_n(x) = 2^{-n} e^{-in\theta/2} R_{n}(e^{i\theta}), \quad n \geq 0,
\end{equation}
where $x = \cos(\theta/2)$. The transformation $x = \cos(\theta/2) = (e^{i\theta/2}+e^{-i\theta/2})/2$, is known as the Delsarte-Genin transformation (see  \cite{Zhedanov-JAT98}, where the abbreviation  DG transformation is also used). 

The sequence of functions $\{\mathcal{W}_n\}_{n=0}^{\infty}$ satisfies (see \cite{BracMcCabPerezRanga-MCOM2015, Dimitar-Ranga-MN2013}) 
\begin{equation} \label{TTRR-for-Wn}
    \mathcal{W}_{n+1}(x) = \left(x - c_{n+1}\sqrt{1-x^2}\right)\mathcal{W}_{n}(x) - d_{n+1}\,\mathcal{W}_{n-1}(x), \quad n \geq 0,
\end{equation}
with $\mathcal{W}_{-1}(x) = 0$ and $\mathcal{W}_{0}(x) = 1$. 
Functions $\mathcal{W}_n$ bear a resemblance to standard orthogonal polynomials on the real line. For instance, for any $n \geq 1$, $\mathcal{W}_n$ has exactly $n$ distinct zeros $x_{n,j} = \cos(\theta_{n,j}/2)$, $j = 1,2 \ldots, n$, all in $(-1,1)$.  Another consequence of the three term recurrence formula (\ref{TTRR-for-Wn}) is the interlacing property 
\begin{equation}  \label{Eq-InterlacingZeros-Wn}
-1 < x_{n+1,n+1} < x_{n,n} < x_{n+1,n} < \cdots < x_{n,1} < x_{n+1,1} < 1, \quad n \geq 1,
\end{equation}
for the zeros of $\mathcal{W}_n$ and $\mathcal{W}_{n+1}$ (fact that was used in \cite{Dimitar-Ranga-MN2013} to prove the interlacing property (\ref{Eq-InterlacingZeros-Rn}) for the zeros of $R_n$ and $R_{n+1}$ on $\T$).  

In \cite{BracMcCabPerezRanga-MCOM2015}, the function $\mathcal{W}_n$ is considered as belonging to the simple field extension of the field of polynomials, generated by the adjunction of $\sqrt{1-x^2}$. More precisely, if $\mathbb{P}_n$ represents the polynomials of degree $\leq n$, we consider the  linear space of real functions $\Omega_n$, $n\ge 0$,  given by $\Omega_{0} = \mathbb{P}_{0}$ and 
$$
\Omega_n =\left\{B_n^{(0)}(x) + \sqrt{1-x^2} B_{n-1}^{(1)}(x):\, B_{k}^{(j)}\in \mathbb P_k\text{ and } B_k^{(j)}(-x) = (-1)^{k}B_k^{(j)}(x)  \right\}.
$$
Observe that if $\mathcal{F} \in \Omega_{n-2}$ then $\mathcal{F} \in  \Omega_{n}$, although $\mathcal{F} \notin  \Omega_{n-1}$.

Moreover (see \cite{DimIsmailRan-2012} and also \cite[Lemma 2.1]{BracMcCabPerezRanga-MCOM2015}), if $\mathcal{F}_n \in \Omega_n$ then the polynomial $Q_n$ defined by $r_n e^{-in\theta/2} Q_n(e^{i\theta}) = \mathcal{F}_n(\cos(\theta/2))$, $r_n\in \R$,  is self-reciprocal or conjugate reciprocal  (see \cite{Sinclair-Vaaler-2008}): it satisfies $Q_n^{\ast}(z) = z^{n}\overline{Q_n(1/\overline{z})} = Q_n(z)$.  
Notice that the polynomials $R_n$, given by \eqref{Eq-for-Wn}, share the same property.

With respect to  the zeros of $\mathcal{F}_n \in \Omega_n$ we know the following (see \cite[Thm.\,2.2]{BracMcCabPerezRanga-MCOM2015}):
\begin{itemize}
\item If $\mathcal{F}_n(1) \neq 0$, then the number of zeros of $\mathcal{F}_n$ in $(-1,1)$ can not exceed $n$.  

\item If $\mathcal{F}_n(1) = 0$, then $\mathcal{F}_n(-1) = 0$ and the number of zeros of $\mathcal{F}_n$ in $[-1,1]$ can not exceed $n+1$.
\end{itemize}
The straightforward identity  
\[ 
   e^{-i\theta/2}(e^{i\theta} - e^{i\theta_{j}}) = (1- e^{i\theta_j})\big[x - \cot(\theta_{j}/2) \sqrt{1-x^2}\big], \quad x = \cos(\theta/2),
\]
yields also the following factorization for $\mathcal{F}_n \in \Omega_n$:
\begin{lemma}  \label{Lema-Division} Let $\mathcal{F}_{n} \in \Omega_n$. With $1 \leq r \leq n$, let   $x_{j} = \cos(\theta_{j}/2)$, $j=1,2, \ldots, r$,  be zeros of $\mathcal{F}_{n}$ in $(-1,1)$.  Then 
\[
     \mathcal{F}_n(x) = \mathcal{G}_{n-r}(x) \prod_{j=1}^{r} (x - \beta_{j} \sqrt{1-x^2}), 
\]
where $\mathcal{G}_{n-r} \in \Omega_{n-r}$ and $\beta_{j} = \cot (\theta_{j}/2)$, $j = 1, 2 \ldots, r$. 
\end{lemma}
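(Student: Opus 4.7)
My plan is to prove the lemma by induction on $r$. The base case $r=0$ is vacuous, and for the inductive step it suffices to handle the case $r=1$: given $\mathcal{F}_n\in\Omega_n$ with $\mathcal{F}_n(x_1)=0$ at $x_1=\cos(\theta_1/2)\in(-1,1)$, I will construct $\mathcal{H}_{n-1}\in\Omega_{n-1}$ such that $\mathcal{F}_n(x)=(x-\beta_1\sqrt{1-x^2})\,\mathcal{H}_{n-1}(x)$. Granted this, the remaining zeros $x_2,\dots,x_r$ must lie in the zero set of $\mathcal{H}_{n-1}$, since the identity
\[
  x_j-\beta_1\sqrt{1-x_j^{\,2}} \;=\; \frac{\sin((\theta_1-\theta_j)/2)}{\sin(\theta_1/2)}
\]
is nonzero for $j\neq 1$; induction then concludes.

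For the $r=1$ step I will write $\mathcal{F}_n = B_n^{(0)}+\sqrt{1-x^2}\,B_{n-1}^{(1)}$ and seek $\mathcal{H}_{n-1}=C_{n-1}^{(0)}+\sqrt{1-x^2}\,C_{n-2}^{(1)}$. Expanding $(x-\beta_1\sqrt{1-x^2})\mathcal{H}_{n-1}$ and separating the pieces with and without $\sqrt{1-x^2}$ yields a linear system whose formal solution, valid when $\beta_1\neq 0$, is
\[
  C_{n-2}^{(1)}(x) = \frac{(1-x_1^{\,2})\bigl(\beta_1 B_n^{(0)}(x)+x\,B_{n-1}^{(1)}(x)\bigr)}{x^2-x_1^{\,2}},\qquad C_{n-1}^{(0)}(x)=\frac{x\,C_{n-2}^{(1)}(x)-B_{n-1}^{(1)}(x)}{\beta_1}.
\]
A short computation using $B_n^{(0)}(x_1)=-\sin(\theta_1/2)\,B_{n-1}^{(1)}(x_1)$ (which is just $\mathcal{F}_n(x_1)=0$) shows that the numerator of $C_{n-2}^{(1)}$ vanishes at $x_1$; the parity rule $B_k^{(j)}(-x)=(-1)^k B_k^{(j)}(x)$ then forces it to vanish at $-x_1$ as well, so $x^2-x_1^{\,2}$ divides it as a polynomial. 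The resulting $C_{n-2}^{(1)}$ has degree at most $n-2$ with parity $(-1)^{n-2}$, and the analogous check for $C_{n-1}^{(0)}$ confirms $\mathcal{H}_{n-1}\in\Omega_{n-1}$.

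The principal technical obstacle is the degenerate case $\beta_1 = 0$, corresponding to $\theta_1=\pi$ and $x_1=0$, where the formula for $C_{n-1}^{(0)}$ is singular. Here the linear factor reduces to $x$, and the parity rules alone (treating $n$ even and $n$ odd separately) force $x\mid B_n^{(0)}$ and $x\mid B_{n-1}^{(1)}$ as soon as $\mathcal{F}_n(0)=0$, so one may simply set $\mathcal{H}_{n-1}(x)=\mathcal{F}_n(x)/x$ and verify directly that it lies in $\Omega_{n-1}$. Beyond this bifurcation and the routine parity/degree bookkeeping, the argument is essentially algebraic division; the factorization identity quoted just before the lemma provides the precise form of each extracted linear factor and does the main geometric work for us.
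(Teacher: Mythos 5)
Your argument is correct, but it proceeds quite differently from the paper's. The paper offers essentially a one-line proof: it invokes the correspondence $r_n e^{-in\theta/2}Q_n(e^{i\theta})=\mathcal{F}_n(\cos(\theta/2))$ between $\Omega_n$ and conjugate-reciprocal polynomials $Q_n$ of degree $n$, factors the zeros $e^{i\theta_j}$ out of $Q_n$ on the unit circle, and uses the displayed identity $e^{-i\theta/2}(e^{i\theta}-e^{i\theta_j})=(1-e^{i\theta_j})\bigl[x-\cot(\theta_j/2)\sqrt{1-x^2}\bigr]$ to translate each linear factor back, the quotient being again conjugate-reciprocal and hence of the form $\mathcal{G}_{n-r}\in\Omega_{n-r}$. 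You instead perform the division entirely inside the quadratic extension: separating the rational and $\sqrt{1-x^2}$ components gives a $2\times 2$ system, the key identity $x^2-\beta_1^2(1-x^2)=(x^2-x_1^2)/(1-x_1^2)$ reduces it to a single polynomial divisibility, and the vanishing of $\beta_1 B_n^{(0)}+xB_{n-1}^{(1)}$ at $x_1$ (from $\mathcal{F}_n(x_1)=0$) together with the parity rule supplies the factor $x^2-x_1^2$; your degree and parity bookkeeping for $C_{n-2}^{(1)}$ and $C_{n-1}^{(0)}$, and your separate treatment of $\beta_1=0$, are all sound, as is the nonvanishing computation $x_j-\beta_1\sqrt{1-x_j^2}=\sin((\theta_1-\theta_j)/2)/\sin(\theta_1/2)$ that drives the induction (note the lemma implicitly assumes the $x_j$ distinct, as in its application to $\mathcal{W}_n$). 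Your route is more laborious but completely self-contained, avoiding the appeal to the $\Omega_n\leftrightarrow Q_n$ bijection from the cited references; the paper's route is shorter but leans on that external machinery and on the fact that the quotient of two conjugate-reciprocal polynomials with common unimodular zeros is again of that type.
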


From this, we have in particular for the functions $\mathcal{W}_n$,  
\[
     \mathcal{W}_n(x) = \prod_{j=1}^{n} \frac{\big[1- \tau_{j-1}\alpha_{j-1}\big](1-z_{n,j})}{2 \left[1-\Re(\tau_{j-1}\alpha_{j-1})\right]} \, \prod_{j=1}^{n} \left(x - \beta_{n,j} \sqrt{1-x^2}\right), \quad n \geq 1,
\]
where $\beta_{n,j} = \cot(\theta_{n,j}/2)$ and $z_{n,j} = e^{i \theta_{n,j}}$, $j =1,2, \ldots, n$.

A direct consequence of (\ref{Eq-Rn-Ortogonality}) is the following orthogonality relation  for  $\{\mathcal{W}_n\}_{n=0}^{\infty}$:
\[
    \int_{-1}^{1} (x + i \sqrt{1-x^2})^{-n+1+2j} \mathcal{W}_n(x) \, d \psi(x) = 
       \left\{
      \begin{array}{ll}
        0, & 0 \leq j \leq n-1, \\
        i\, 2^{-n-1} \gamma_{n}^{(t)}, & j =n,
      \end{array}
     \right.
\]
where $d\psi(x) = - \sqrt{1-x^2} \, d\mu(e^{i2\arccos(x)})$, which in turn yields (see \cite[Theorem 4.1 and Corollary 4.1.3]{BracMcCabPerezRanga-MCOM2015}) that $n,m = 0, 1, 2, \dots \ $,
\begin{equation} \label{Orthogonality-for-Wm}
  \begin{array}{l}
    \dsp \int_{-1}^{1} \mathcal{W}_{2n}(x)\,\mathcal{W}_{2m}(x)\, \sqrt{1-x^2}\,d\psi(x) = \chi_{2n}\,\delta_{n,m} ,  \\[3ex]
    \dsp  \int_{-1}^{1} \mathcal{W}_{2n+1}(x)\,\mathcal{W}_{2m+1}(x)\,\sqrt{1-x^2}\,d\psi(x) =  \chi_{2n+1} \,\delta_{n,m},\\[3ex]
     \dsp  \int_{-1}^{1} \mathcal{W}_{2n}(x)\,\mathcal{W}_{2m+1}(x)\, d\psi(x) = 0,
  \end{array}
\end{equation}
where 
\[
     \chi_{0} = \frac{1}{2} [1 - \Re(\alpha_0)] \quad \mbox{and} \quad \chi_{n} =   d_{n+1} \frac{1+c_{n}^2}{1+c_{n+1}^2} \chi_{n-1}, \ \  n\geq 1. 
\]

 We can also express \eqref{Orthogonality-for-Wm} by (see \cite[Corollary 4.1.1]{BracMcCabPerezRanga-MCOM2015})
 \begin{equation}\label{eqOrthogonalityC1}
  \dsp \int_{-1}^{1} \mathcal{F}(x)\,\mathcal{W}_{n}(x)\,d\psi(x) = 0 \quad \mbox{whenever} \quad \mathcal{F} \in \Omega_{n-1}.
 \end{equation}

The following result, related to Theorem~\ref{ThmA-Support},  illustrates the advantage of working with the sequence $\mathcal{W}_n$ (and thus, $R_n$): 
\begin{theo} \label{Thm-Zeros-in-Gaps}
Let $\mu$ be a nontrivial positive measure on $\T$, with $R_n$ and $\mathcal{W}_n$  given by $(\ref{Eq-CDKernel-POPUC})$ and  $(\ref{Eq-for-Wn})$, respectively.  Assume $0 < \vartheta_1 < \vartheta_2 < 2\pi$. Then with the notation \eqref{notationArcs}:
\begin{enumerate}
\item [a)] If  $\supp \mu\cap \mathcal{A}(0, \vartheta_1)=\emptyset$ then the function $\mathcal{W}_n$ does not have any zeros in the interval $[\cos(\vartheta_1/2), 1]$ and, equivalently, the polynomial $R_n$ does not vanish on the closed arc $\mathcal{A}[0, \vartheta_1]$. \\[-2ex]

\item [b)] If  $\supp \mu\cap \mathcal{A}(\vartheta_2, 2\pi)=\emptyset$   then the function $\mathcal{W}_n$ does not have any zeros in the interval $[-1, \cos(\vartheta_2/2)]$  and, equivalently, the polynomial $R_n$ does not vanish on the closed arc $\mathcal{A}[\vartheta_2, 2\pi]$. \\[-2ex]

\item [c)] If $\supp \mu\cap \mathcal{A}(\vartheta_1, \vartheta_2)=\emptyset$  then the function  $\mathcal{W}_n$ has at most one zero  in the interval $[\cos(\vartheta_2/2), \cos(\vartheta_1/2)]$ and, equivalently, the polynomial $R_n$ has at most one zero in the closed arc $\mathcal{A}[\vartheta_1, \vartheta_2]$. 

\end{enumerate}

\end{theo}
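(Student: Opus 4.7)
The plan is to prove all three statements by contradiction, exploiting the orthogonality relation \eqref{eqOrthogonalityC1} of $\mathcal{W}_n$ against $\Omega_{n-1}$, where $d\psi(x)=-\sqrt{1-x^2}\,d\mu(e^{i\,2\arccos x})$ is the transplanted measure on $[-1,1]$. The common mechanism is Lemma~\ref{Lema-Division}: if I assume $\mathcal{W}_n$ has $r$ zeros $\xi_j=\cos(\theta_j/2)$ in the forbidden region, then
\[
\mathcal{W}_n(x)=\mathcal{G}_{n-r}(x)\prod_{j=1}^{r}\bigl(x-\beta_j\sqrt{1-x^2}\bigr),\quad \mathcal{G}_{n-r}\in\Omega_{n-r},\quad \beta_j=\cot(\theta_j/2),
\]
and the sign of each factor at $x=\cos(\theta/2)$ is governed by the elementary identity
\[
x-\beta_j\sqrt{1-x^2}=\frac{\sin\bigl((\theta_j-\theta)/2\bigr)}{\sin(\theta_j/2)}.
\]

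For part (a), the hypothesis $\supp\mu\cap\mathcal{A}(0,\vartheta_1)=\emptyset$ gives $\supp\psi\subseteq[-1,\cos(\vartheta_1/2)]$ (the factor $\sqrt{1-x^2}$ in $d\psi$ automatically suppresses any possible atom of $\mu$ at $z=1$). Assuming $r\geq 1$ zeros in $[\cos(\vartheta_1/2),1)$, i.e.\ $\theta_j\in(0,\vartheta_1]$, I set
\[
\mathcal{F}(x):=\frac{\mathcal{W}_n(x)}{x-\beta_r\sqrt{1-x^2}}=\mathcal{G}_{n-r}(x)\prod_{j=1}^{r-1}\bigl(x-\beta_j\sqrt{1-x^2}\bigr)\in\Omega_{n-1}.
\]
Then $\mathcal{F}\,\mathcal{W}_n$ equals a square times the single factor $(x-\beta_r\sqrt{1-x^2})$; on $\supp\psi$ one has $\theta\geq\vartheta_1\geq\theta_r$, so by the sign identity this last factor is nonpositive, vanishing only on a finite set. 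Nontriviality of $\mu$ makes $\supp\psi$ infinite, so $\int\mathcal{F}\,\mathcal{W}_n\,d\psi<0$, contradicting \eqref{eqOrthogonalityC1}. Part (b) is symmetric via $x\mapsto -x$, $\theta\mapsto 2\pi-\theta$.

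Part (c) is where the main obstacle lies. Now $\supp\psi\subseteq[-1,\cos(\vartheta_2/2)]\cup[\cos(\vartheta_1/2),1)$, and the natural move — removing the two outer bad factors so that the remaining product stays single-signed on each component of $\supp\psi$ — only lands in $\Omega_{n-2}$; the remark just after the definition of $\Omega_n$ warns, however, that $\Omega_{n-2}\not\subseteq\Omega_{n-1}$ because of parity. The remedy is to multiply by $\sqrt{1-x^2}\in\Omega_1$, which shifts the index up to $\Omega_{n-1}$ while being itself nonnegative on $[-1,1]$. Assuming $r\geq 2$ zeros in the middle gap with $\theta_1\leq\cdots\leq\theta_r$ in $[\vartheta_1,\vartheta_2]$, I choose
\[
\mathcal{F}(x):=\frac{\sqrt{1-x^2}\,\mathcal{W}_n(x)}{(x-\beta_1\sqrt{1-x^2})(x-\beta_r\sqrt{1-x^2})}\in\Omega_{n-1}.
\]
On the left piece of $\supp\psi$, $\theta\geq\vartheta_2\geq\theta_r\geq\theta_1$, so both removed factors are nonpositive and their product is nonnegative; on the right piece, $\theta\leq\vartheta_1\leq\theta_1\leq\theta_r$, so both are nonnegative, again with nonnegative product. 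Hence $\mathcal{F}\,\mathcal{W}_n\geq 0$ on $\supp\psi$ with only finitely many zeros, giving $\int\mathcal{F}\,\mathcal{W}_n\,d\psi>0$, the desired contradiction; therefore $r\leq 1$.
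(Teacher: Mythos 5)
Your proof is correct and follows essentially the same route as the paper's: factor out the offending zeros via Lemma~\ref{Lema-Division}, test $\mathcal{W}_n$ against the resulting element of $\Omega_{n-1}$ using \eqref{eqOrthogonalityC1}, and derive a sign contradiction on $\supp\psi$, with the multiplication by $\sqrt{1-x^2}$ in part (c) to lift $\Omega_{n-2}$ into $\Omega_{n-1}$ exactly as in the paper. The only (harmless) differences are that you handle all $r$ hypothetical zeros at once rather than assuming one or two, and that you explicitly justify the strict sign of the integral via the infiniteness of $\supp\psi$.
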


\noindent {\bf Proof.}  If $\mathcal{A}(0, \vartheta_1)$ is a gap in the support of $\mu$   then the support of $\psi$, where $d \psi(x) = - \sqrt{1-x^2} \, d\mu(e^{i2\arccos(x)})$, stays within the interval $[-1, \cos(\vartheta_1/2)]$. Suppose that $\mathcal{W}_n$ has the zero $y = \cos(\vartheta/2)$ in $[\cos(\vartheta_1/2), 1]$. By Lemma \ref{Lema-Division}, 
\[
   \mathcal{W}_n(x) =  (x - \beta \sqrt{1-x^2}) \, \mathcal{G}_{n-1}(x), 
\]
where $\mathcal{G}_{n-1} \in \Omega_{n-1}$ and $\beta = \cot(\vartheta/2)$.  Hence, by \eqref{eqOrthogonalityC1}, 
\[ 
   I = \int_{-1}^{1} (\mathcal{G}_{n-1}(x))^2 (x - \beta \sqrt{1-x^2}) \,d \psi(x) 
     = \int_{-1}^{1} \mathcal{G}_{n-1}(x) W_{n}(x) \,d \psi(x) = 0.
\]  
On the other hand, since the support of $\psi$ stays within the interval $[-1, \cos(\vartheta_1/2)]$ and since $(x - \beta \sqrt{1-x^2}) < 0$ in $[-1,\cos(\vartheta_1/2))$, one must have 
\[
    I = \int_{-1}^{\cos(\vartheta_1/2)} (\mathcal{G}_{n-1}(x))^2 (x - \beta \sqrt{1-x^2}) \,d \psi(x) < 0. 
\]
That is, the existence of a zero of $\mathcal{W}_n$ in $[\cos(\vartheta_1/2), 1]$ leads to a contradiction. This proves \emph{a)}; the proof of \emph{b)} is similar.

Now we turn to the statement \emph{c)}. If   $\mathcal{A}(\vartheta_1, \vartheta_2)$ is a gap in  $\supp \mu$, then the support of $\psi$ stays outside of  the subinterval $(\cos(\vartheta_2/2), \cos(\vartheta_1/2))$. Suppose that $y_1 = \cos(\vartheta^{(1)}/2)$ and $y_2 = \cos(\vartheta^{(2)}/2)$  are zeros of $\mathcal{W}_n$ in $[\cos(\vartheta_2/2), \cos(\vartheta_1/2)]$. By Lemma \ref{Lema-Division},
\[
   \mathcal{W}_n(x) =  (x - \beta_1 \sqrt{1-x^2}) (x - \beta_2\sqrt{1-x^2})\, \mathcal{G}_{n-2}(x), 
\]
where $\mathcal{G}_{n-2} \in \Omega_{n-2}$, $\beta_1 = \cot(\vartheta^{(1)}/2)$ and $\beta_2 = \cot(\vartheta^{(2)}/2)$. Since $\sqrt{1-x^2} \mathcal{G}_{n-2}(x) \in   \Omega_{n-1}$ we have by \eqref{eqOrthogonalityC1} 
\[ 
   \tilde{I} = \int_{-1}^{1} (\mathcal{G}_{n-2}(x))^2 (x - \beta_1 \sqrt{1-x^2}) (x - \beta_2 \sqrt{1-x^2}) \sqrt{1-x^2}\,d \psi(x) 
     = 0.
\]  
On the other hand, since $(x - \beta_1 \sqrt{1-x^2}) (x - \beta_2 \sqrt{1-x^2})> 0$ outside the subinterval $[\cos(\vartheta_2/2), \cos(\vartheta_1/2)]$, one must have 
\[  
  \begin{array}l 
    \dsp \tilde{I} = \int_{-1}^{\cos(\vartheta_2/2)} (\mathcal{G}_{n-1}(x))^2 (x - \beta_1 \sqrt{1-x^2}) (x - \beta_2\sqrt{1-x^2}) \sqrt{1-x^2}\,d \psi(x) \\[2ex]
    \dsp \qquad \quad + \int_{\cos(\vartheta_1/2)}^{1} (\mathcal{G}_{n-1}(x))^2 (x - \beta_1 \sqrt{1-x^2}) (x - \beta_2\sqrt{1-x^2}) \sqrt{1-x^2}\,d \psi(x)> 0. 
  \end{array}
\]
That is, the existence of more than one zero of $\mathcal{W}_n$ in $[\cos(\vartheta_2/2), \cos(\vartheta_1/2)]$ leads to a contradiction.   \eProof

\begin{remark}
Statement \emph{c)} of Theorem \ref{Thm-Zeros-in-Gaps} was previously established in \cite{CantMoraVela-2002} and \cite{Golinskii-2002}, by different techniques.  
\end{remark}

\setcounter{equation}{0} 
\section{Extreme zeros from the $(c_n, d_n)$-parametrization}  \label{Sec-BEZeros}

We first look at the information about the zeros of  $\mathcal{W}_n$ and $R_n$ that can be extracted from the coefficients $\{c_n\}_{n=1}^{\infty}$ and $\{d_{n+1}\}_{n=1}^{\infty}$ of the recurrence formula \eqref{TTRR-for-Wn}. We are particularly interested in the smallest, $x_{n,n}$, and the largest, $x_{n,1} $, zero of $\mathcal{W}_n$, see \eqref{Eq-InterlacingZeros-Wn}. The   following bounds were already established in \cite{Dimitar-Ranga-MN2013}:
\begin{itemize}
\item if \ $c_k > 0$ for $1 \leq k \leq n$ \ and \ $\dsp \check{c}_{n} = \min_{1 \leq k \leq n} c_k$, \ then \ $\dsp -\frac{1}{\sqrt{1+\check{c}_{n}^2}} < x_{n,n}$;

\item if \ $c_k < 0$ for $1 \leq k \leq n$ \ and \ $\dsp \hat{c}_{n} = \max_{1 \leq k \leq n} c_k$, \ then \ $\dsp x_{n,1} < \frac{1}{\sqrt{1+\hat{c}_{n}^2}}$.
\end{itemize}

Observe that these bounds depend only on the sequence $\{c_n\}$; the following results extend and improve them by taking into account also the chain sequence $\{d_{n+1}\}$. Given these two sequences, we define for $x\in [-1,1]$
\begin{equation}\label{definitionFD}
 \fd_{n+1}(x)  : =  \frac{d_{n+1}} 
 {(x - c_{n}\sqrt{1-x^2})(x - c_{n+1}\sqrt{1-x^2})} .
\end{equation}

\begin{theo} \label{Thm-General-Zero-Bounds-Wn} Let $\mathcal{W}_n$ be given by the  three term recurrence formula $(\ref{TTRR-for-Wn})$, where 
$\{c_n\}_{n=1}^{\infty}$ is a real sequence and  $\{d_{n+1}\}_{n=1}^{\infty}$ is a positive chain sequence.   Then, for $N \geq 2$, the zeros of $\mathcal{W}_n$, $1 \leq n \leq N$, all belong to $(A,B) \subseteq (-1,1)$ if, and only if, 
\begin{enumerate}
\item[a)] $\dsp \frac{A}{\sqrt{1-A^2}} < c_1 <\frac{B}{\sqrt{1-B^2}}$,

\item[b)] $\{\fd_{n+1}(x)\big\}_{n=1}^{N-1}$ is a finite positive chain sequence for $ x=A$  and $ x=B$.
\end{enumerate} 
\end{theo}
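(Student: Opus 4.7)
The plan is to exploit the fact that the recurrence \eqref{TTRR-for-Wn} at the boundary points $x=\pm 1$ degenerates nicely, since $\ell_n(\pm 1):=(\pm 1)-c_n\sqrt{1-(\pm 1)^2}=\pm 1$, and therefore $\fd_{n+1}(\pm 1)=d_{n+1}$. My first step would be to compute $\mathcal{W}_n(\pm 1)$ explicitly. By an elementary induction on the recurrence and the identity $(1-g_n)g_{n+1}=d_{n+1}$ (with $g_1=0$, the minimal parameter sequence), I would show
\[
\mathcal{W}_n(1)=\prod_{k=1}^{n}(1-g_k)>0, \qquad \mathcal{W}_n(-1)=(-1)^{n}\prod_{k=1}^{n}(1-g_k).
\]
This uses the given hypothesis that $\{d_{n+1}\}$ is a positive chain sequence in an essential way, and is the step that I expect to carry the most weight: without it, the sign of $\mathcal{W}_n$ outside of its zeros cannot be pinned down from the recurrence alone. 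Combined with the interlacing property \eqref{Eq-InterlacingZeros-Wn}, it implies that whenever the zeros of $\mathcal{W}_n$ all lie in $(A,B)\subset(-1,1)$, the function has constant sign on $[B,1]$ (positive) and on $[-1,A]$ (sign $(-1)^n$).

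Next I would establish the algebraic bridge between the recurrence and the chain-sequence condition. For any $x$ with $\ell_n(x)\mathcal{W}_{n-1}(x)\neq 0$, set $G_n(x)=\mathcal{W}_n(x)/(\ell_n(x)\mathcal{W}_{n-1}(x))$ and $g_n(x)=1-G_n(x)$. Dividing \eqref{TTRR-for-Wn} by $\ell_{n+1}(x)\mathcal{W}_n(x)$ yields
\[
G_{n+1}(x)=1-\frac{\fd_{n+1}(x)}{G_n(x)},\qquad \text{equivalently}\qquad (1-g_n(x))\,g_{n+1}(x)=\fd_{n+1}(x),
\]
with initial value $G_1(x)=1$, i.e.\ $g_1(x)=0$. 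Thus $\{g_n(x)\}$ is precisely the minimal parameter sequence candidate for $\{\fd_{n+1}(x)\}$, and it is an admissible parameter sequence if and only if $g_n(x)\in[0,1)$ for all relevant $n$, which translates into $G_n(x)\in(0,1]$, i.e.\ the positivity and boundedness of the ratios $\mathcal{W}_n(x)/(\ell_n(x)\mathcal{W}_{n-1}(x))$.

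For the forward implication of the theorem, conditions a) and b) give $\ell_1(B)>0>\ell_1(A)$; then positivity of $\fd_{n+1}(B)$ combined with $\ell_1(B)>0$ propagates inductively to $\ell_n(B)>0$ for $n\le N$, and symmetrically $\ell_n(A)<0$. Using the parameter sequences of $\{\fd_{n+1}(B)\}$ and $\{\fd_{n+1}(A)\}$ and the bridge identity above, one reconstructs $\mathcal{W}_n(B)>0$ and $\operatorname{sign}\mathcal{W}_n(A)=(-1)^n$, so the zeros of $\mathcal{W}_n$ avoid $[B,1]\cup[-1,A]$ and lie in $(A,B)$. For the converse, the zero of $\mathcal{W}_1$ immediately gives a); from the first step, $\mathcal{W}_n(B)>0$ and $\operatorname{sign}\mathcal{W}_n(A)=(-1)^n$; then the recurrence rewritten as $\ell_{n+1}(B)\mathcal{W}_n(B)=\mathcal{W}_{n+1}(B)+d_{n+1}\mathcal{W}_{n-1}(B)$ forces $\ell_{n+1}(B)>0$ (and symmetrically $\ell_{n+1}(A)<0$), so the $g_n(B)$ and $g_n(A)$ defined above lie in $[0,1)$ by the observation $G_n\le 1\iff d_n\mathcal{W}_{n-2}\ge 0$, and hence they are genuine parameter sequences, proving b). The non-trivial part is the first step; the remainder is a bookkeeping exercise once the sign of $\mathcal{W}_n$ at $\pm 1$ is in hand.
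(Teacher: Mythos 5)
Your plan follows essentially the same route as the paper's own proof: your ``bridge'' identity is exactly the paper's relation
\[
\frac{\mathcal{W}_{n}(x)}{(x-c_n \sqrt{1-x^2})\,\mathcal{W}_{n-1}(x)} \left(1 - \frac{\mathcal{W}_{n+1}(x)}{(x-c_{n+1} \sqrt{1-x^2})\,\mathcal{W}_{n}(x)}\right) = \fd_{n+1}(x),
\]
your boundary computation $\mathcal{W}_n(1)=\prod_{k=1}^n(1-g_k)>0$, $(-1)^n\mathcal{W}_n(-1)>0$ is the paper's opening observation (your product formula is just a more explicit form of it), the propagation of the signs of $x-c_n\sqrt{1-x^2}$ at $x=A,B$ is identical, and the identification of $\{1-G_n(x)\}$ with the minimal parameter sequence of $\{\fd_{n+1}(x)\}$ is the crux in both write-ups. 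Your argument for the direction ``zeros in $(A,B)$ $\Rightarrow$ a) and b)'' is complete as described.

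The one step that does not go through as written is the final inference in the other direction: from $\mathcal{W}_n(B)>0$ and $(-1)^n\mathcal{W}_n(A)>0$ for $1\le n\le N$ you conclude ``so the zeros of $\mathcal{W}_n$ avoid $[B,1]\cup[-1,A]$.'' Sign information at the isolated points $A$, $B$ (and at $\pm1$) does not exclude an even number of zeros in $(B,1)$ or in $(-1,A)$: a function can be positive at $B$ and at $1$ and still vanish twice in between. The missing ingredient is the inductive interlacing argument the paper supplies: from \eqref{Eq-InterlacingZeros-Wn} one has $(-1)^j\mathcal{W}_n(x_{n-1,j})>0$, in particular $(-1)^n\mathcal{W}_n(x_{n-1,n-1})<0$, whereas $(-1)^n\mathcal{W}_n(A)>0$; hence if $A<x_{n-1,n-1}$ then $\mathcal{W}_n$ changes sign on $(A,x_{n-1,n-1})$, and since by interlacing the only zero of $\mathcal{W}_n$ below $x_{n-1,n-1}$ is $x_{n,n}$, it follows that $A<x_{n,n}$. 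The base case $A<x_{1,1}$ does follow from $\mathcal{W}_1(A)<0$ because $\mathcal{W}_1$ has a single zero in $(-1,1)$ and is negative to its left; induction then gives $A<x_{N,N}$, and the symmetric argument at $B$ gives $x_{N,1}<B$. With this supplement your proposal coincides with the paper's proof.
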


\noindent {\em Proof}. Observe that  $A-c_1\sqrt{1-A^2} < 0$ and $\fd_{n+1}(A) > 0$, $n \geq 1$,  imply that  $A - c_n\sqrt{1-A^2} < 0$, $n = 1,2, \dots, N$. In the same vein,  $B-c_1\sqrt{1-B^2} > 0$ and $\fd_{n+1}(B) > 0$, $n \geq 1$,  imply  $B - c_n\sqrt{1-B^2} > 0$, $n = 1,2, \dots, N$. Thus, condition a) in fact means that
$$
\dsp \frac{A}{\sqrt{1-A^2}} < c_n <\frac{B}{\sqrt{1-B^2}} \quad n=1, \dots, N.
$$
Furthermore, from (\ref{TTRR-for-Wn}) that $\mathcal{W}_{n} \in C^2(-1,1)$,  
\begin{equation}  \label{Eq-RR1-for-Wn}
   \frac{\mathcal{W}_{n+1}(x)}{\mathcal{W}_{n}(x)} + d_{n+1} \frac{\mathcal{W}_{n-1}(x)}{\mathcal{W}_{n}(x)} = x-c_{n+1} \sqrt{1-x^2}, 
\end{equation}
and 
\begin{equation} \label{Eq-ChainRR-for-Wn}
  \begin{array}l
      \dsp  \frac{\mathcal{W}_{n}(x)}{(x-c_n \sqrt{1-x^2})\mathcal{W}_{n-1}(x)} \left(1 - \frac{\mathcal{W}_{n+1}(x)}{(x-c_{n+1} \sqrt{1-x^2})\mathcal{W}_{n}(x)}\right) = \fd_{n+1}(x),  
   \end{array}
\end{equation}
for $n \geq 1$. Using that $\mathcal{W}_{1}(1)/\mathcal{W}_{0}(1) = 1$ and that $\{d_{n+1}\}_{n=1}^{\infty}$ is a positive chain sequence, it follows that  $(-1)^{n-1}\,\mathcal{W}_{n-1}(-1) = \mathcal{W}_{n-1}(1) > 0$, $n \geq 1$. 

We  prove the theorem for the left bound $A$. Assume that the zeros of $\mathcal{W}_{N}$ are all greater than $A$. Then, by the interlacing property (\ref{Eq-InterlacingZeros-Wn}), $\mathcal{W}_{n}(A)/\mathcal{W}_{n-1}(A) < 0$, $1 \leq n \leq N$. Hence, from $\mathcal{W}_{1}(A) = A - c_1 \sqrt{1-A^2}$ and from  (\ref{Eq-RR1-for-Wn}), $A - c_n \sqrt{1-A^2} < 0$, $1 \leq n \leq N$, establishing the lower bound for $c_n$ in a). 

Now, since $\mathcal{W}_{n}(A)/[(A - c_n \sqrt{1-A^2})\mathcal{W}_{n-1}(A)] > 0$ for $1 \leq n \leq N$ and 
\[
    \fd_{n+1}(A) = \frac{d_{n+1}} 
               {(A - c_{n}\sqrt{1-A^2})(A - c_{n+1}\sqrt{1-A^2})} > 0, \quad 1 \leq n \leq N-1,
\]  
from (\ref{Eq-ChainRR-for-Wn}) we have
\[
   \mathcal{W}_{n+1}(A)/[(A - c_{n+1} \sqrt{1-A^2})\mathcal{W}_{n}(A)] \ < 1 \quad  \mbox{for} \quad  1 \leq n \leq N-1.
\]

An immediate consequence of (\ref{Eq-ChainRR-for-Wn}) is that $\{\fd_{n+1}(A)\}_{n=1}^{N-1}$ is a finite positive chain sequence. Thus,  part \textit{b)} of the theorem  holds for the point $A$. Notice that  $\{1-\mathcal{W}_{n+1}(A)/[(A - c_{n+1} \sqrt{1-A^2})\mathcal{W}_{n}(A)]\}_{n=0}^{N-1}$ is the minimal parameter sequence for $\{\fd_{n+1}(A)\}_{n=1}^{N-1}$.

Now we prove the reciprocal statement, again at  the point $A$, assuming that both \textit{a)} and \textit{b)}, for $x=A$, hold.

By the  assumption \textit{a)},  $\mathcal{W}_1(A) =  A - c_1\sqrt{1-A^2} < 0$ (which means $A < x_{1,1}$)  and $A - c_{n+1}\sqrt{1-A^2} < 0$ for $n = 1, 2, \ldots, N-1$. Using \textit{b)} we have that 
\[
      \left\{1 - \frac{\mathcal{W}_{n+1}(A)}{(A-c_{n+1} \sqrt{1-A^2})\mathcal{W}_{n}(A)}\right\}_{n=0}^{N-1} 
\]
is the minimal parameter sequence of the finite positive chain sequence $\{\fd_{n+1}(A)\}_{n=1}^{N-1}$.  Hence, 
\[
     0 < \frac{\mathcal{W}_{n+1}(A)}{(A-c_{n+1} \sqrt{1-A^2})\mathcal{W}_{n}(A)} < 1, \quad n = 1, 2, \ldots, N-1.  
\]
From this,  $(-1)^{n}\, \mathcal{W}_{n}(A) > 0$, $n = 1, 2, \ldots, N$.

From the interlacing property (\ref{Eq-InterlacingZeros-Wn})  for the zeros $x_{n,j}$, $j = 1,2 \ldots, n$,  of $\mathcal{W}_n$ observe that  $(-1)^j\mathcal{W}_{n}(x_{n-1,j}) > 0$ for $j =1,2, \ldots, n-1$ and $n \geq 2$. Hence, in particular,
\[
    (-1)^{n}\, \mathcal{W}_{n}(x_{n-1,n-1}) < 0, \quad n \geq 2.       
\]
Now the inequality $A < x_{N,N}$ easily follows by induction. 

This completes the proof of the theorem with respect to the point $A$; the proof for the point $B$ is similar. \eProof

With further restrictions on the coefficients $c_n$, $n \geq 1$, we get the following corollary of Theorem \ref{Thm-General-Zero-Bounds-Wn}: 
\begin{coro} \label{Coro-General2-Zero-Bounds-Wn} Let $\mathcal{W}_n$ be given by the  three term recurrence formula $(\ref{TTRR-for-Wn})$, where the sequences 
$\{c_n\}_{n=1}^{\infty}$ and  $\{d_{n+1}\}_{n=1}^{\infty}$ satisfy the conditions of  Theorem  \ref{Thm-General-Zero-Bounds-Wn}, for some even $N \geq 2$. Assume that there exist $C$ and $D$, $A < C < D < B$, such that for $\ell\in \{0, 1\}$  the elements of the sequence $\{c_n\}_{n=1}^{\infty}$ also satisfy the additional property 
\begin{equation} \label{Eq-odd-even-coefficient-bounds}
    \frac{A}{\sqrt{1-A^2}} < c_{2n-\ell} < \frac{C}{\sqrt{1-C^2}} <\frac{D}{\sqrt{1-D^2}} < c_{2n-1+\ell} <\frac{B}{\sqrt{1-B^2}}, 
\end{equation}
for $1 \leq n \leq  N/2$. 

Then the zeros of $\mathcal{W}_n$, $1 \leq n \leq N$, stay within  $(A,C) \cup (D,B)$.  
\end{coro}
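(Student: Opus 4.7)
The plan is to combine Theorem~\ref{Thm-General-Zero-Bounds-Wn}, which already confines the zeros of $\mathcal{W}_n$ for $1\le n\le N$ to $(A,B)$, with a direct sign argument ruling out zeros in the middle closed interval $[C,D]$. The alternating bounds \eqref{Eq-odd-even-coefficient-bounds} on the $c_n$'s are tailored to produce alternating signs for the factors $x-c_n\sqrt{1-x^2}$ on $[C,D]$, and this alternation is precisely what is needed to iterate \eqref{TTRR-for-Wn} without cancellation.

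First I would fix $x\in[C,D]$ and observe that, since $y\mapsto y/\sqrt{1-y^2}$ is strictly increasing on $(-1,1)$, hypothesis \eqref{Eq-odd-even-coefficient-bounds} implies
\[
\sigma_n(x):=\mathrm{sign}\!\bigl(x-c_n\sqrt{1-x^2}\bigr)=(-1)^{n+\ell}, \quad 1\le n\le N,
\]
so that $\sigma_{n+1}(x)=-\sigma_n(x)$. This single formula handles both parities of $\ell$ uniformly and absorbs the only piece of parity bookkeeping in the argument.

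The main step is to prove by induction on $n$ that, for every $x\in[C,D]$,
\[
\mathcal{W}_n(x)\ne 0\quad\text{and}\quad \mathrm{sign}\!\bigl(\mathcal{W}_n(x)\,\mathcal{W}_{n-1}(x)\bigr)=\sigma_n(x), \quad 1\le n\le N.
\]
The base case $n=1$ is immediate from $\mathcal{W}_0\equiv 1$ and $\mathcal{W}_1(x)=x-c_1\sqrt{1-x^2}$. For the inductive step, dividing \eqref{TTRR-for-Wn} by $\mathcal{W}_n(x)$ gives
\[
\frac{\mathcal{W}_{n+1}(x)}{\mathcal{W}_n(x)}=\bigl(x-c_{n+1}\sqrt{1-x^2}\bigr)-d_{n+1}\,\frac{\mathcal{W}_{n-1}(x)}{\mathcal{W}_n(x)}.
\]
The first summand has sign $\sigma_{n+1}(x)$, while the second summand has sign $-\sigma_n(x)$ because $d_{n+1}>0$ and, by the inductive hypothesis, $\mathcal{W}_{n-1}(x)/\mathcal{W}_n(x)$ has sign $\sigma_n(x)$. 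Since $\sigma_{n+1}(x)=-\sigma_n(x)$, the two contributions agree in sign and cannot cancel, so the quotient is non-zero with sign $\sigma_{n+1}(x)$, closing the induction.

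Combining both ingredients, the zeros of $\mathcal{W}_n$, $1\le n\le N$, lie in $(A,B)\setminus[C,D]=(A,C)\cup(D,B)$. I do not anticipate any significant obstacle here, since the entire argument reduces to the sign bookkeeping described above; the only delicate point is verifying that the alternation pattern $\sigma_n(x)=(-1)^{n+\ell}$ matches the phase of the recurrence, and this is exactly what the split bounds \eqref{Eq-odd-even-coefficient-bounds} are designed to guarantee.
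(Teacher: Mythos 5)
Your proof is correct, and it reaches the conclusion by a route that differs in execution from the paper's. Both arguments first invoke Theorem~\ref{Thm-General-Zero-Bounds-Wn} to confine the zeros to $(A,B)$, and both exploit the same mechanism: the split bounds \eqref{Eq-odd-even-coefficient-bounds} force the factors $x-c_n\sqrt{1-x^2}$ to alternate in sign on $[C,D]$, so the two terms on the right of \eqref{TTRR-for-Wn} reinforce rather than cancel. The difference is where the sign bookkeeping is done. The paper evaluates $\mathcal{W}_n$ only at the two endpoints $C$ and $D$, runs an induction in steps of two (tracking $(-1)^{\ell+k}\mathcal{W}_{2k-1}$ and $(-1)^{k}\mathcal{W}_{2k}$ separately at each point), and then must appeal to the interlacing property \eqref{Eq-InterlacingZeros-Wn} to upgrade ``same nonzero sign at $C$ and $D$'' to ``no zero in $[C,D]$.'' You instead fix an arbitrary $x\in[C,D]$ and run a single uniform induction on the ratio $\mathcal{W}_{n+1}(x)/\mathcal{W}_n(x)$, proving non-vanishing pointwise on the whole closed interval; this makes the interlacing step unnecessary and collapses the paper's two-case parity analysis into the one identity $\sigma_{n+1}=-\sigma_n$. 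Your version is the cleaner of the two: it needs only positivity of $d_{n+1}$ and the strictness of the inequalities in \eqref{Eq-odd-even-coefficient-bounds} (which guarantee $\sigma_n(x)\neq 0$ on all of $[C,D]$), whereas the paper's endpoint argument buys nothing extra here since a same-sign conclusion at two points alone would not exclude an even number of interior zeros without the interlacing input.
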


\noindent {\em Proof}.  From Theorem \ref{Thm-General-Zero-Bounds-Wn} the zeros of $W_n$, $1 \leq n \leq N$ are all inside $(A,B)$. Since $W_1(x) = x - c_1 \sqrt{1-x^2}$, from (\ref{Eq-odd-even-coefficient-bounds}), 
\[
      (-1)^{\ell+1}W_1(C)  > 0  \quad \mbox{and} \quad (-1)^{\ell+1}W_1(D)  > 0.
\]
Thus, the single zero of $W_1$ stays within  $(A,C) \cup (D,B)$. 

Since 
\[
      (-1)^{\ell}[C - c_2\sqrt{1-C^2}] > 0 \quad \mbox{and} 
      \quad (-1)^{\ell}[D - c_2\sqrt{1-D^2}]  > 0,
\]
we now obtain from $(\ref{TTRR-for-Wn})$ that 
\[
  \begin{array}l
     (-1)^{1} W_2(C) = (-1)^{\ell}[C - c_2\sqrt{1-C^2}] (-1)^{\ell+1}W_1(C) - (-1)^{1} d_2 > 0, \\[2ex]
     (-1)^{1} W_2(D) = (-1)^{\ell}[C - c_2\sqrt{1-D^2}] (-1)^{\ell+1}W_1(D) - (-1)^{1} d_2 > 0.
  \end{array}
\]
Thus, together with the fact that the zeros of $W_1$ and $W_2$ interlace, we conclude that the zeros of $W_2$ stay within  $(A,C) \cup (D,B)$. 

We continue the proof by induction. Suppose that for a $k \geq 1$ the zeros of $W_{2k-1}$ and $W_{2k}$ stay within  $(A,C) \cup (D,B)$ and that 
\[
     (-1)^{\ell+k}W_{2k-1}(C) > 0,  \quad (-1)^{\ell+k}W_{2k-1}(D) > 0
\]
and 
\[
     (-1)^{k}W_{2k}(C) > 0,  \quad (-1)^{k}W_{2k}(D) > 0.
\]
Since 
\[
      (-1)^{\ell+1}(C - c_{2k+1}\sqrt{1-C^2}) > 0 \quad \mbox{and} 
      \quad (-1)^{\ell+1}(D - c_{2k+1}\sqrt{1-D^2}) > 0,
\]
we obtain from  
\[
  \begin{array}l
     (-1)^{\ell+k+1}W_{2k+1}(C) = (-1)^{\ell+k+1}\big[(C - c_{2k+1}\sqrt{1-C^2}) W_{2k}(C) - d_{2k+1} W_{2k-1}(C)\big], \\[2ex]
     (-1)^{\ell+k+1}W_{2k+1}(D) = (-1)^{\ell+k+1}\big[(D - c_{2k+1}\sqrt{1-D^2}) W_{2k}(D) - d_{2k+1} W_{2k-1}(D)\big], 
  \end{array}
\]
that 
\[
  \begin{array}l
     (-1)^{\ell+k+1}W_{2k+1}(C)  > 0 \quad \mbox{and} \quad 
     (-1)^{\ell+k+1}W_{2k+1}(D) > 0.
  \end{array}
\]
Thus, from the interlacing of the zeros, the zeros of $W_{2k+1}$  stay within  $(A,C) \cup (D,B)$ also. Now, continuing with  
\[
      (-1)^{\ell}(C - c_{2k+2}\sqrt{1-C^2}) > 0 \quad \mbox{and} 
      \quad (-1)^{\ell}(D - c_{2k+2}\sqrt{1-D^2}) > 0,
\]
we obtain from  
\[
  \begin{array}l
     (-1)^{k+1}W_{2k+2}(C) = (-1)^{2\ell+ k+1}\big[(C - c_{2k+2}\sqrt{1-C^2}) W_{2k+1}(C) - d_{2k+2}W_{2k}(C)\big], \\[1ex]
     (-1)^{k+1} W_{2k+2}(D) = (-1)^{2\ell+ k+1}\big[(D - c_{2k+2}\sqrt{1-D^2}) W_{2k+1}(D) - d_{2k+2}W_{2k}(D)\big],
  \end{array}
\]
that 
\[
  \begin{array}l
     (-1)^{k+1}W_{2k+2}(C)  > 0 \quad \mbox{and} \quad 
     (-1)^{k+1}W_{2k+2}(D) > 0.
  \end{array}
\]
From this the zeros of $W_{2k+2}$ also stay within  $(A,C) \cup (D,B)$. Thus, the proof follows by induction.  \hfil \eProof

We now apply Theorem \ref{Thm-General-Zero-Bounds-Wn} to find good bounds for the extreme zeros of $\mathcal{W}_N$, or equivalently, for the extreme zeros of $R_{N}$ on $\T$. With this purpose we formulate first the following lemma:
\begin{lemma}\label{lemma:inequality}
Let $a, b\in \R$, $0<q< 1$, and
\begin{equation}\label{defH}
h(x;a,b)=  \left(x- a\sqrt{1-x^2}\right)\left(x- b\sqrt{1-x^2}\right) , \quad x\in [-1,1].
\end{equation}
Then, the inequality $h(x;a,b)\geq q$ holds for 
\begin{equation}\label{solutionset}
x\in \left[-1,\frac{u^{(-)}}{\sqrt{1+(u^{(-)})^2}}\right] \cup \left[ \frac{u^{(+)}}{\sqrt{1+(u^{(+)})^2}}, 1\right],
\end{equation}
where $u^{(\pm)}=u^{(\pm)}(a,b,q)$ are the zeros of the polynomial \eqref{mainpolynomial}, as discussed in Section \ref{Sec-Intro}.

%
\end{lemma}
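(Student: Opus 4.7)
The plan is to reduce the two-factor inequality to a quadratic inequality in a single real variable via the substitution $u = x/\sqrt{1-x^2}$, which maps $(-1,1)$ bijectively and monotonically onto $\R$ with inverse $x = u/\sqrt{1+u^2}$. The point is that for $x\in(-1,1)$ each of the linear factors appearing in \eqref{defH} splits as
\[
x - a\sqrt{1-x^2} \;=\; \sqrt{1-x^2}\,(u-a),
\]
so, using $1-x^2 = 1/(1+u^2)$,
\[
h(x;a,b) \;=\; (1-x^2)(u-a)(u-b) \;=\; \frac{(u-a)(u-b)}{1+u^2}.
\]

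From this identity a direct computation yields
\[
h(x;a,b) - q \;=\; \frac{(u-a)(u-b)-q(1+u^2)}{1+u^2} \;=\; \frac{P(u)}{1+u^2},
\]
with $P$ the quadratic in \eqref{mainpolynomial}. Since $1+u^2>0$, the inequality $h(x;a,b)\geq q$ on $(-1,1)$ is equivalent to $P(u)\geq 0$ on $\R$. As $0<q<1$ the leading coefficient $1-q$ is positive and the roots $u^{(-)}\leq u^{(+)}$ are real (as remarked just after \eqref{mainpolynomial}), so the set $\{P\geq 0\}$ is $\{u\leq u^{(-)}\}\cup\{u\geq u^{(+)}\}$. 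Translating these two half-lines back through the increasing inverse $u\mapsto u/\sqrt{1+u^2}$ gives the two intervals displayed on the right-hand side of \eqref{solutionset}, and separately checking the endpoints $x=\pm 1$ (where $\sqrt{1-x^2}=0$, so $h(\pm 1;a,b)=1>q$) shows that these points belong to the solution set and can be included to close up the intervals.

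I expect no genuine obstacle; the argument is essentially an algebraic rearrangement combined with a monotone change of variable. The only place that warrants a second look is the assertion, borrowed from the paragraph introducing \eqref{mainpolynomial}, that the roots of $P$ are real whenever $q\in[0,1)$. This can be verified either by a direct discriminant computation,
\[
(a+b)^2-4(1-q)(ab-q) \;=\; (a-b)^2 + 4q(1+ab-q),
\]
together with the observation that, viewed as a quadratic in $a$, this expression has nonpositive discriminant in $b$, or more cheaply by noting $P(a)=-q(1+a^2)<0$, $P(b)=-q(1+b^2)<0$ and $P(u)\to+\infty$ as $|u|\to\infty$, so the intermediate value theorem produces two real roots bracketing $a$ and $b$.
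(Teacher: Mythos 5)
Your proof is correct and follows essentially the same route as the paper's: both reduce $h(x;a,b)\geq q$ to the quadratic inequality $P(u)\geq 0$ under the monotone substitution $u = x/\sqrt{1-x^2}$ (which the paper writes trigonometrically as $u=\cot(\theta/2)$ with $x=\cos(\theta/2)$), and both handle the endpoints $x=\pm 1$ separately via $h(\pm 1;a,b)=1$. The only cosmetic difference is the verification that the roots are real: the paper views the discriminant as a concave quadratic in $q$ that is nonnegative at $q=0$ and $q=1$, while your observation that $P(a)=-q(1+a^2)<0$ and $P(b)=-q(1+b^2)<0$ with $P(u)\to+\infty$ is a slicker route to the same sub-fact.
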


\begin{remark} \label{remarkq1}
	Recall that for $q\in [0,1)$, $u^{(\pm)}$ are real and finite, while for $q=1$ we assumed  $u^{(\pm)}(a,b,q)=\pm\infty$ when $\pm(a+b)\geq 0$. For instance, if $a+b>0$, then  
$$
u^{(-)} = \frac{ab-1}{a+b}, \quad u^{(+)}=+\infty.
$$
so the set \eqref{solutionset} is reduced to the single, leftmost interval.  Similar analysis is valid for $a+b<0$.
\end{remark}
\begin{proof}
Observe that $h(\pm 1; a,b)=1$, so the endpoints of $[-1,1]$ are always in the solution set of the inequality $h(x;a,b)\geq q$. Consider now $x\in (-1,1)$, and make in $h$ the change of variables $x=\cos (\theta/2)$, $\theta\in (0,2\pi)$:
$$
  h(\cos \theta/2; a,b )=\sin^2 \left(\theta/2\right)  \left(\cot \left(\theta/2\right)- a  \right)\left(\cot \left(\theta/2\right)- b  \right).
$$
Hence, using the basic identity $\sin^2 + \cos^2 =1$ we conclude that the inequality $h(\cos \theta/2; a,b )\geq q$ is equivalent to 
\[
\big(\cot \left(\theta/2\right)- a  \big)\big(\cot \left(\theta/2\right)- b  \big) \geq q\,\big(\cot^2 \left(\theta/2\right)+1  \big) .
\]
This is a quadratic inequality in $u=\cot (\theta/2)$: $(u-a)(u-b)\geq q(u^2+1)$, that is, $P(u)\geq 0$, where $P$ is the polynomial defined in \eqref{mainpolynomial}.
Notice that the discriminant $D(q,a,b) = (a+b)^2 - 4(1-q)(ab-q) $ is a quadratic polynomial in $q$ with the negative leading coefficient, and both $D(0,a,b)\geq 0$ and $D(1,a,b)\geq 0$, so that $D(q,a,b)\geq 0$ for all $0<q<1$. 

Since the leading coefficient of $P$ is positive, we conclude that $P(u)\geq 0$ when either $u\leq u^{(-)}$ or $u\geq u^{(+)}$, where $u^{(-)}\leq u^{(+)}$. It remains to use the monotonicity of $\cos (\theta/2)$ and the identity
\[
      x=\frac{u}{\sqrt{1+u^2}}
\]
to finish the proof.

\end{proof}

\begin{theo} \label{Thm-Estimates-for-Zero-Bounds-Wn}
   Let $N \geq 2$ and let $\{q_{n+1}\}_{n=1}^{N-1}$ be a finite scaling sequence for the chain sequence $\{d_{n+1}\}_{n=1}^{N-1}$, see Definition~\ref{def:scalingsequence}. Set  
\begin{equation}\label{def:AB}
 A_{N} = \min_{2 \leq n \leq N}   \frac{u_n^{(-)}}{\sqrt{1+\big(u_n^{(-)}\big)^2}} , \quad  
 B_{N} = \max_{2 \leq n \leq N}   \frac{u_n^{(+)}}{\sqrt{1+\big(u_n^{(+)}\big)^2}}  ,
\end{equation}
where $u_n^{(\pm)}$ were defined in \eqref{alternativeU}.

Then the zeros of $\mathcal{W}_N$ lie in $(A_{N},B_{N})$ and consequently, the zeros of $R_{N}$ lie within the open arc 
$ \mathcal{A}\big(2\arccos(B_{N}), 2\arccos(A_{N})\big) $.  
 
\end{theo}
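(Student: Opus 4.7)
My plan is to reduce everything to Theorem~\ref{Thm-General-Zero-Bounds-Wn} applied at the endpoints $A = A_N$ and $B = B_N$; thus I must verify both conditions (a) and (b) there.

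I would first observe that for each $n = 1, \dots, N-1$, the polynomial $P$ of \eqref{mainpolynomial} taken with $a = c_n$, $b = c_{n+1}$, $q = q_{n+1}$ satisfies $P(c_n) = -q_{n+1}(1+c_n^2) < 0$ and analogously $P(c_{n+1}) < 0$; since the scaling sequence is positive by definition, this places both $c_n$ and $c_{n+1}$ strictly between the roots $u_{n+1}^{(-)} < u_{n+1}^{(+)}$. Exploiting the strict monotonicity of the map $u \mapsto u/\sqrt{1+u^2}$ together with the definition \eqref{def:AB}, this yields
$$
\frac{A_N}{\sqrt{1-A_N^2}} \leq u_{n+1}^{(-)} < c_n,\, c_{n+1} < u_{n+1}^{(+)} \leq \frac{B_N}{\sqrt{1-B_N^2}}, \quad 1 \leq n \leq N-1.
$$
Taking $n = 1$ delivers condition~(a) of Theorem~\ref{Thm-General-Zero-Bounds-Wn}. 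The same chain also shows that $A_N - c_k\sqrt{1-A_N^2} < 0$ and $B_N - c_k\sqrt{1-B_N^2} > 0$ for every $k$ arising in what follows, so both $\fd_{n+1}(A_N)$ and $\fd_{n+1}(B_N)$ are positive.

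Next I would invoke Lemma~\ref{lemma:inequality} (with $a = c_n$, $b = c_{n+1}$, $q = q_{n+1}$). By \eqref{def:AB}, $A_N$ lies in the left interval and $B_N$ in the right interval of the solution set \eqref{solutionset}, so $h(A_N; c_n, c_{n+1}) \geq q_{n+1}$ and $h(B_N; c_n, c_{n+1}) \geq q_{n+1}$. Written in terms of $\fd_{n+1}$, this yields the crucial domination bound
$$
0 < \fd_{n+1}(x) \leq \frac{d_{n+1}}{q_{n+1}}, \qquad x \in \{A_N, B_N\}, \ 1 \leq n \leq N-1.
$$

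Finally, since $\{q_{n+1}\}$ is a scaling sequence, $\{d_{n+1}/q_{n+1}\}_{n=1}^{N-1}$ is itself a finite positive chain sequence. Wall's classical comparison theorem for chain sequences (see \cite{Chihara-Book}) then guarantees that any positive sequence dominated term-by-term by a positive chain sequence is itself a positive chain sequence; applied to $\{\fd_{n+1}(A_N)\}$ and $\{\fd_{n+1}(B_N)\}$, this settles condition~(b). Theorem~\ref{Thm-General-Zero-Bounds-Wn} then localizes the zeros of $\mathcal{W}_N$ inside $(A_N, B_N)$, and the transition to the arc statement follows from $x = \cos(\theta/2)$ and the strict monotonicity of $\cos$ on $(0, \pi)$. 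I anticipate that the only nontrivial step is the domination bound above, which rests entirely on Lemma~\ref{lemma:inequality} and the bookkeeping of pairing the correct root ($u^{(-)}$ or $u^{(+)}$) with the correct endpoint; once that is in place, the rest amounts to a direct invocation of the earlier results.
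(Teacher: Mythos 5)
Your proposal is correct and follows essentially the same route as the paper: both reduce the claim to Theorem~\ref{Thm-General-Zero-Bounds-Wn} via Lemma~\ref{lemma:inequality} and the chain-sequence comparison test of Wall/Chihara, with your evaluation $P(c_n)=-q_{n+1}(1+c_n^2)<0$ serving as an equivalent substitute for the paper's product identities $(u_{n+1}^{(\pm)}-c_n)(c_n-u_{n+1}^{(\mp)})$ used to place $c_n,c_{n+1}$ between the roots. The only detail you leave implicit is the boundary case $q_{n+1}=1$, where $P$ degenerates to a linear polynomial and Lemma~\ref{lemma:inequality} is stated only for $0<q<1$; the paper dispatches this separately via Remark~\ref{remarkq1}.
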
 
\begin{proof}
Given a finite positive chain sequence $\{\hat{d}_{n+1}\}_{n=1}^{N-1}$, the comparison test	\cite[Theorem 5.7]{Chihara-Book} assures that if 
\begin{equation} \label{comparisontest}
0 < \fd_{n+1}(x) \leq \hat{d}_{n+1} , \quad n =1,2, \ldots, N-1,
\end{equation}
then  $\{\fd_{n+1}(x)\}_{n=1}^{N-1}$ is also a finite positive chain sequence. From the definition  \eqref{definitionFD} it follows that \eqref{comparisontest} is equivalent to the inequality
\begin{equation} \label{Eq-Inequality-x}
h(x;c_n, c_{n+1})\geq q_{n+1},  \quad 1 \leq n \leq N-1,
\end{equation}
with $h$ defined in \eqref{defH}. Assume first that $q_{n+1}<1$.

Applying Lemma~\ref{lemma:inequality} we conclude that with the definitions given in the statement of the theorem, $\{\fd_{n+1}(A_N)\}_{n=1}^{N-1}$ and $\{\fd_{n+1}(B_N)\}_{n=1}^{N-1}$ are finite positive chain sequences, or in other words, condition \textit{b)} of Theorem \ref{Thm-General-Zero-Bounds-Wn} holds.

It is also easy to check that 
\[
   (u_{n+1}^{(+)} - c_{n})(c_{n} - u_{n+1}^{(-)}) = \frac{q_{n+1}}{1-q_{n+1}} (1+c_{n}^2)
\]
and
\[
   (u_{n+1}^{(+)} - c_{n+1})(c_{n+1} - u_{n+1}^{(-)}) = \frac{q_{n+1}}{1-q_{n+1}} (1+c_{n+1}^2),
\] 
for $n = 1, 2, \dots, N-1$. Since $u_{n+1}^{(+)}>u_{n+1}^{(-)}$, we conclude that $u_{n+1}^{(-)} < c_n$, $u_{n+1}^{(-)} < c_{n+1}$, $u_{n+1}^{(+)} > c_{n}$ and $u_{n+1}^{(+)} > c_{n+1}$ for  $1 \leq n \leq N-1$.
Direct calculation shows that   the values obtained for $A_N$ and $B_N$ also satisfy part \textit{a)} of Theorem \ref{Thm-General-Zero-Bounds-Wn}. 

Finally, we can use Remark~\ref{remarkq1} to conclude that the theorem also remains valid when $q_{n+1} = 1$.  
\end{proof} 

As it follows from the proof of Theorem~\ref{Thm-Estimates-for-Zero-Bounds-Wn}, the smaller the scaling sequence $\{q_{n+1}\}$ is, the tighter the bounds  $A_N$ and $B_N$ are. This is easier to observe when the measure is real-symmetric (that is, all its Verblunsky coefficients are real and all values  $c_n=0$): in this case
\[
    u_{n+1}^{(-)} = - \frac{\sqrt{q_{n+1}}}{\sqrt{1-q_{n+1}}} \quad \mbox{and} \quad u_{n+1}^{(+)} =  \frac{\sqrt{q_{n+1}}}{\sqrt{1-q_{n+1}}}.
\]
Combining it with Theorem~\ref{Thm-Zeros-in-Gaps} we see that for such measures the existence of a scaling   sequence $\{q_{n+1}\}_{n=1}^{\infty}$ such that  $q_{n+1} < q < 1$, $n \geq 1$, implies that the measure lives inside a symmetric arc and its support has a gap in a  neighborhood of $z=1$.  Examples of measures with such a property were given in \cite{Zhedanov-JAT98} by means of the Delsarte-Genin transformation and an additional scaling parameter.  

It is convenient to specify the statement of Theorem~\ref{Thm-Estimates-for-Zero-Bounds-Wn} for the case of the trivial scaling sequence  $q_{n+1} = 1$, $1 \leq n \leq N-1$. Recall that by the definition in Section~\ref{Sec-Intro}, 
\[
\pm(c_n+c_{n+1}) \geq 0 \quad \Rightarrow \quad u_{n+1}^{(\pm)}=u^{(\pm)}(c_n,c_{n+1},1)=\pm \infty.
\]
It means that if at least for one $n\in \{1, \dots, N-1\}$, $c_n+c_{n+1}\geq 0$, the value of $B_N$ in \eqref{def:AB} is $1$; analogously, if at least for one $n\in \{1, \dots, N-1\}$, $c_n+c_{n+1}\leq 0$,  $A_N=-1$. We can restate it as the following corollary: 

\begin{coro} \label{Coro-Estimates-for-Zero-Bounds-Wn}
In the conditions of Theorem~\ref{Thm-Estimates-for-Zero-Bounds-Wn} take  $q_{n+1} = 1$, $1 \leq n \leq N-1$. Then
\begin{itemize}
\item[(a)] If $c_n+c_{n+1}>0$ for all $n=1, \dots, N-1$, then  the zeros of $\mathcal{W}_N$ lie in $(A_{N},1)$ and the zeros of $R_{N}$ lie within the open arc $\mathcal{A}(0, 2\arccos(A_{N}))$, with $A_N$ defined in \eqref{def:AB}.
\item[(b)] If $c_n+c_{n+1}<0$ for all $n=1, \dots, N-1$, then  the zeros of $\mathcal{W}_N$ lie in $(-1, B_{N})$ and the zeros of $R_{N}$ lie within the open arc $\mathcal{A}( 2\arccos(B_{N}),2\pi)$, with $B_N$ defined in \eqref{def:AB}.
\end{itemize}
\end{coro}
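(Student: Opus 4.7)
The plan is to derive this corollary directly from Theorem~\ref{Thm-Estimates-for-Zero-Bounds-Wn} by specializing the scaling sequence to the trivial choice $q_{n+1}\equiv 1$, $1\leq n\leq N-1$. This is always admissible because any positive chain sequence $\{d_{n+1}\}$ is trivially a scaling of itself by $1$, so all hypotheses of Theorem~\ref{Thm-Estimates-for-Zero-Bounds-Wn} are met. With this choice, the only thing left to do is to compute what the bounds $A_N$ and $B_N$ from \eqref{def:AB} become in this limiting situation and translate the result back to the unit circle.

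For case (a), I would invoke Remark~\ref{remarkq1} with $a=c_n$, $b=c_{n+1}$, $q=1$. Under the hypothesis $c_n+c_{n+1}>0$ for every $n\in\{1,\dots,N-1\}$, this gives $u_{n+1}^{(+)}=+\infty$ and the finite value $u_{n+1}^{(-)}=(c_nc_{n+1}-1)/(c_n+c_{n+1})$. Since the map $u\mapsto u/\sqrt{1+u^2}$ sends $+\infty$ to $1$ and every finite real into $(-1,1)$, the definition \eqref{def:AB} collapses to $B_N=1$, while $A_N$ remains a genuine number in $(-1,1)$ obtained as the minimum over $n$ of the transformed $u_{n+1}^{(-)}$'s. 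Applying Theorem~\ref{Thm-Estimates-for-Zero-Bounds-Wn} then yields that the zeros of $\mathcal{W}_N$ lie in $(A_N,1)$, and via the Delsarte--Genin substitution $x=\cos(\theta/2)$, together with $2\arccos(1)=0$, that the zeros of $R_N$ occupy the open arc $\mathcal{A}(0,2\arccos(A_N))$. Case (b) is treated in a completely symmetric manner: under $c_n+c_{n+1}<0$ one has $u_{n+1}^{(-)}=-\infty$ and a finite $u_{n+1}^{(+)}$, which gives $A_N=-1$ and $B_N\in(-1,1)$, producing the arc $\mathcal{A}(2\arccos(B_N),2\pi)$.

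I do not anticipate any substantive obstacle here. The delicate analytical point, namely the validity of Theorem~\ref{Thm-Estimates-for-Zero-Bounds-Wn} in the boundary case $q_{n+1}=1$, was already handled at the end of its proof precisely through the convention introduced in Remark~\ref{remarkq1}. The only item requiring a short verification is that the strict sign hypothesis $c_n+c_{n+1}>0$ (respectively $<0$) holding uniformly in $n$ is exactly what guarantees that no index $n$ can produce a competing finite value on the $+\infty$ side (respectively $-\infty$ side), so that the collapse $B_N=1$ (respectively $A_N=-1$) is the correct one; this is a direct reading of the cases laid out in Remark~\ref{remarkq1}.
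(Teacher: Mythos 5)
Your proposal is correct and follows essentially the same route as the paper, which derives the corollary in the prose immediately preceding it: specialize Theorem~\ref{Thm-Estimates-for-Zero-Bounds-Wn} to the trivial scaling sequence $q_{n+1}\equiv 1$, use the convention $u^{(\pm)}(c_n,c_{n+1},1)=\pm\infty$ when $\pm(c_n+c_{n+1})\geq 0$ from Remark~\ref{remarkq1}, and observe that the sign hypothesis collapses $B_N$ to $1$ (resp.\ $A_N$ to $-1$) while keeping the other bound finite. Your reading of the role of the uniform sign condition is slightly imprecise (it is what keeps $A_N>-1$, i.e.\ the bound nontrivial, since $B_N=1$ already follows from a single index), but this does not affect the validity of the argument.
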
 
Notice that if $c_{n}+c_{n+1}$ changes sign, the statement of Corollary~\ref{Coro-Estimates-for-Zero-Bounds-Wn} becomes trivial.

We can formulate a somewhat weaker form of Theorem~\ref{Thm-Estimates-for-Zero-Bounds-Wn}  by observing that either 
\[
    x - c_n \sqrt{1-x^2} \leq -\sqrt{q_{n}^{(1,N)}}, \quad 1 \leq n \leq N,
\]
or 
\[
    x - c_n \sqrt{1-x^2} \geq \sqrt{q_{n}^{(1,N)}}, \quad 1 \leq n \leq N, 
\]
for $x \in [-1,1]$ are sufficient for \eqref{Eq-Inequality-x}. Here, $q_{1}^{(1,N)} = q_{2}$, $q_{n}^{(1,N)} = \max\{q_{n},q_{n+1}\}$, $2 \leq n \leq N-1$ and $q_{N}^{(1,N)} = q_{N}$. These inequalities are equivalent to 
\begin{equation}  \label{Eq-Inequalities-u1n}
   2 \cot^2(\theta/4) - 2c_n \cot(\theta/4) \leq \big(1 - \sqrt{q_{n}^{(1,N)}}\big)\, (1 + \cot^2(\theta/4)), \quad 1 \leq n \leq N,
\end{equation}
and 
\begin{equation} \label{Eq-Inequalities-v1n}
   2 \cot^2(\theta/4) - 2c_n \cot(\theta/4) \geq \big(1 + \sqrt{q_{n}^{(1,N)}}\big)\, (1 + \cot^2(\theta/4)), \quad 1 \leq n \leq N,
\end{equation}
with $x=\cos(\theta/2)$.  With similar manipulations as in the proof of Theorem \ref{Thm-Estimates-for-Zero-Bounds-Wn}, we get
\begin{theo} \label{Thm-Estimates2-for-Zero-Bounds-Wn}
For $N \geq 2$, let $\{q_{n+1}\}_{n=1}^{N-1}$ be a scaling sequence  for a finite positive chain sequence	$\{ d_{n+1}\}_{n=1}^{N-1}$, and 	$\{ c_{n}\}_{n=1}^{N-1}$ an arbitrary finite real sequence. Define 
\[
q_{1}^{(1,N)} = q_{2}, \quad q_{n}^{(1,N)} = \max\{q_{n},q_{n+1}\}, \quad 2 \leq n \leq N-1, \quad \text{and} \quad q_{N}^{(1,N)} = q_{N},
\]
as well as
\begin{equation}\label{def_un1}
\dsp u_{1,n} = \frac{c_{n}+ \sqrt{c_{n}^{2}+ (1-q_{n}^{(1,N)})}}{1+\sqrt{q_{n}^{(1,N)}}} \quad \mbox{and} \quad 
\dsp v_{1,n} = \frac{1+\sqrt{q_{n}^{(1,N)}}}{-c_{n}+ \sqrt{c_{n}^{2}+ (1-q_{n}^{(1,N)})}}.  
\end{equation}
If 
\begin{equation}\label{def:ABnew}
 A_{N} = \min_{1 \leq n \leq N}   \frac{u_{1,n}^2-1}{u_{1,n}^2+1} , \quad  
 B_{N} = \max_{1 \leq n \leq N}   \frac{v_{1,n}^2-1}{v_{1,n}^2+1}   ,
\end{equation}
then the zeros of $\mathcal{W}_N$, generated by \eqref{TTRR-for-Wn}, lie in $(A_{N},B_{N})$ and the zeros of $R_{N}$ lie within the open arc  
$$\mathcal{A}(2\arccos(B_{N}), 2\arccos(A_{N})).$$ 

\end{theo}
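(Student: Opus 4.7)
The plan is to apply Theorem~\ref{Thm-General-Zero-Bounds-Wn} at the candidate endpoints $A_N$ and $B_N$ in \eqref{def:ABnew}. Its hypothesis (b) requires $\{\fd_{n+1}(x)\}_{n=1}^{N-1}$ to be a finite positive chain sequence at $x=A_N$ and $x=B_N$. Since $\fd_{n+1}(x)=d_{n+1}/h(x;c_n,c_{n+1})$, the scaling-sequence property (namely that $\{d_{n+1}/q_{n+1}\}$ is a positive chain sequence) combined with the comparison test \cite[Thm.~5.7]{Chihara-Book} reduces this to establishing the pointwise product inequality $h(x;c_n,c_{n+1})\geq q_{n+1}$ at both endpoints, exactly as in the proof of Theorem~\ref{Thm-Estimates-for-Zero-Bounds-Wn}.

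To obtain the product bound factor by factor I would use the definition of $q_n^{(1,N)}$, which by inspection satisfies $q_n^{(1,N)}\geq q_{n+1}$ and $q_{n+1}^{(1,N)}\geq q_{n+1}$ for every $n\in\{1,\dots,N-1\}$. Thus, whenever the two factors of $h$ share the same sign and each satisfies $|x-c_k\sqrt{1-x^2}|\geq\sqrt{q_k^{(1,N)}}$ for $k\in\{n,n+1\}$, one has $h(x;c_n,c_{n+1})\geq\sqrt{q_n^{(1,N)}q_{n+1}^{(1,N)}}\geq q_{n+1}$. This is precisely the content of \eqref{Eq-Inequalities-u1n} (both factors $\leq -\sqrt{q_n^{(1,N)}}$, corresponding to $x$ close to $-1$) and \eqref{Eq-Inequalities-v1n} (both factors $\geq\sqrt{q_n^{(1,N)}}$, corresponding to $x$ close to $1$).

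The key computation is to solve these quadratic inequalities explicitly. After the substitutions $x=\cos(\theta/2)$ and $u=\cot(\theta/4)\geq 0$ (noting $\theta\in(0,2\pi]$), the identity
\[
x - c_n\sqrt{1-x^2} = \frac{(u^2-1)-2c_n u}{u^2+1}
\]
turns \eqref{Eq-Inequalities-u1n} into $(1+\sqrt{q_n^{(1,N)}})u^2-2c_nu-(1-\sqrt{q_n^{(1,N)}})\leq 0$, whose larger (and only nonnegative) root is $u_{1,n}$ of \eqref{def_un1}; similarly \eqref{Eq-Inequalities-v1n} becomes $(1-\sqrt{q_n^{(1,N)}})u^2-2c_nu-(1+\sqrt{q_n^{(1,N)}})\geq 0$, whose positive root, after rationalization, is $v_{1,n}$. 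Because the map $u\mapsto(u^2-1)/(u^2+1)$ is strictly increasing on $[0,\infty)$ and equals $x=\cos(\theta/2)$, the bound $\cot(\theta/4)\leq u_{1,n}$ translates into $x\leq (u_{1,n}^2-1)/(u_{1,n}^2+1)$ while $\cot(\theta/4)\geq v_{1,n}$ becomes $x\geq (v_{1,n}^2-1)/(v_{1,n}^2+1)$. Taking the minimum of the former over $n\in\{1,\dots,N\}$ gives $A_N$, at which all \eqref{Eq-Inequalities-u1n} hold; taking the maximum of the latter gives $B_N$, at which all \eqref{Eq-Inequalities-v1n} hold.

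Hypothesis~(a) of Theorem~\ref{Thm-General-Zero-Bounds-Wn} then follows automatically, since at $x=A_N$ one has $A_N-c_1\sqrt{1-A_N^2}\leq -\sqrt{q_1^{(1,N)}}<0$ and at $x=B_N$ one has $B_N-c_1\sqrt{1-B_N^2}\geq\sqrt{q_1^{(1,N)}}>0$, which together are equivalent to $A_N/\sqrt{1-A_N^2}<c_1<B_N/\sqrt{1-B_N^2}$. Theorem~\ref{Thm-General-Zero-Bounds-Wn} then yields $x_{N,j}\in(A_N,B_N)$, and via $z_{N,j}=e^{i\theta_{N,j}}$ with $x_{N,j}=\cos(\theta_{N,j}/2)$ one obtains the arc localization $\mathcal{A}(2\arccos(B_N),2\arccos(A_N))$ for the zeros of $R_N$. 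The main obstacle is mostly bookkeeping: selecting the nonnegative branch of each quadratic, rationalizing $v_{1,n}$ into the form of \eqref{def_un1}, and verifying the boundary conventions $q_1^{(1,N)}=q_2$, $q_N^{(1,N)}=q_N$ so that the factor-wise threshold dominates $q_{n+1}$ uniformly in $n$.
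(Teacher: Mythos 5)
Your proposal is correct and follows essentially the same route as the paper, which itself only sketches this argument: the factor-wise bounds $|x-c_n\sqrt{1-x^2}|\geq\sqrt{q_n^{(1,N)}}$ (with common sign) imply the product inequality $h(x;c_n,c_{n+1})\geq\sqrt{q_n^{(1,N)}q_{n+1}^{(1,N)}}\geq q_{n+1}$, which via the comparison test and Theorem~\ref{Thm-General-Zero-Bounds-Wn} yields the localization, and the quadratics in $u=\cot(\theta/4)$ produce exactly $u_{1,n}$ and (after rationalization) $v_{1,n}$. You have in fact supplied more detail than the paper, which simply refers to ``similar manipulations as in the proof of Theorem~\ref{Thm-Estimates-for-Zero-Bounds-Wn}.''
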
 

\begin{remark} Considering the behavior of the function $x-c_n\sqrt{1-x^2}$, the solutions for $u_{1,n}$ in Theorem \ref{Thm-Estimates2-for-Zero-Bounds-Wn} are obtained from (\ref{Eq-Inequalities-u1n}) and the solutions for $v_{1,n}$ are obtained from (\ref{Eq-Inequalities-v1n}). 
\end{remark}

Finally, specifying Theorem~\ref{Thm-Estimates2-for-Zero-Bounds-Wn} for the trivial scaling sequence we obtain 
\begin{coro} \label{Coro-Estimates2-for-Zero-Bounds-Wn}
The assertion of Theorem~\ref{Thm-Estimates2-for-Zero-Bounds-Wn} is true if we take
\[
  \begin{array}l
    \dsp  u_{1,n} = \frac{c_{n}+ |c_{n}|}{2} \quad \mbox{and} \quad v_{1,n} = \frac{2}{-c_n + |c_n|}. 
  \end{array}
\]

\end{coro}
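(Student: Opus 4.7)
My plan is to deduce Corollary \ref{Coro-Estimates2-for-Zero-Bounds-Wn} as a direct specialization of Theorem \ref{Thm-Estimates2-for-Zero-Bounds-Wn} to the trivial scaling sequence $q_{n+1}\equiv 1$. The first observation is that this choice is always admissible, since any positive chain sequence $\{d_{n+1}\}$ trivially satisfies Definition \ref{def:scalingsequence} with $q_{n+1}=1$, and then $\{d_{n+1}/q_{n+1}\}=\{d_{n+1}\}$ remains a positive chain sequence. With this substitution one has $q_{n}^{(1,N)}=\max\{q_n,q_{n+1}\}=1$ for every index $n$, so the quantity $1-q_{n}^{(1,N)}$ appearing under the square root in \eqref{def_un1} collapses to zero.

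The second step is just an arithmetic simplification. Substituting $q_{n}^{(1,N)}=1$ into the expression for $u_{1,n}$ in \eqref{def_un1} and using $\sqrt{c_n^2}=|c_n|$ yields
\[
u_{1,n}=\frac{c_n+\sqrt{c_n^2}}{1+1}=\frac{c_n+|c_n|}{2},
\]
and the same substitution in the formula for $v_{1,n}$ gives
\[
v_{1,n}=\frac{1+1}{-c_n+\sqrt{c_n^2}}=\frac{2}{-c_n+|c_n|},
\]
which are precisely the formulas stated in the corollary. The conclusion on the location of the zeros of $\mathcal{W}_N$ and $R_N$ inside $(A_N,B_N)$ and the corresponding arc on $\mathbb T$ then transfers verbatim from Theorem \ref{Thm-Estimates2-for-Zero-Bounds-Wn}.

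The only subtle point---and the place where a careful reader might hesitate---is the interpretation of the degenerate cases. When $c_n\ge 0$ the denominator of $v_{1,n}$ vanishes, and when $c_n\le 0$ we have $u_{1,n}=0$. Following the convention adopted in Remark \ref{remarkq1} for the $q=1$ limit, one sets $v_{1,n}=+\infty$ in the former situation, so that $(v_{1,n}^2-1)/(v_{1,n}^2+1)=1$ contributes trivially to the maximum defining $B_N$ in \eqref{def:ABnew}; likewise $(u_{1,n}^2-1)/(u_{1,n}^2+1)=-1$ in the latter, contributing trivially to the minimum defining $A_N$. These degenerate indices are effectively discarded from the optimization, and the bounds are determined only by those $n$ for which the relevant quantity is finite, which is exactly the content of Theorem \ref{Thm-Estimates2-for-Zero-Bounds-Wn} in the $q=1$ regime. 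I do not expect any genuine obstacle beyond bookkeeping these conventions.
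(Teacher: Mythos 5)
Your proposal is correct and matches the paper's treatment: the corollary is obtained precisely by specializing Theorem~\ref{Thm-Estimates2-for-Zero-Bounds-Wn} to the trivial scaling sequence $q_{n+1}\equiv 1$, which forces $q_n^{(1,N)}=1$ and reduces the formulas \eqref{def_un1} to the stated expressions. Your handling of the degenerate cases (infinite $v_{1,n}$ or vanishing $u_{1,n}$ yielding the trivial bounds $B_N=1$ or $A_N=-1$) is consistent with the convention the paper already adopts in Remark~\ref{remarkq1} and in Corollary~\ref{Coro-Estimates-for-Zero-Bounds-Wn}.
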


\setcounter{equation}{0}
\section{Bounds for the support of the measure}  \label{Sec-BSupportM}

Now by making $n\to \infty$ and  considering appropriate infinite positive chain sequence $\{\hat{d}_{n+1}\}_{n=0}^{\infty}$ we can combine Theorem \ref{Thm-Estimates-for-Zero-Bounds-Wn} (for bounds for the zeros of $\mathcal{W}_n$ and $R_n$) and Theorem \ref{ThmA-Support} in order to establish some results about the support of the measure $\mu(0;.)$ from its $(c_n, d_n)$-parametrization. 

\begin{theo} \label{Thm-Estimate-1-for-Support} Given the real sequence $\{c_n\}_{n=1}^{\infty}$ and the non SP positive chain sequence  $\{d_{n+1}\}_{n=1}^{\infty}$, let $\mu(t;.)$, $0 \leq t < 1$,  be the corresponding family of positive measures  \eqref{Eq-Family-of-Measures}. Let also $\{q_{n+1}\}_{n=1}^{\infty}$ be a scaling sequence for $\{d_{n+1}\}_{n=1}^{\infty}$, see Definition~\ref{def:scalingsequence}. Set $A_N$ and $B_N$ as in \eqref{def:AB}, that is, 
\[
A_{N} = \min_{2 \leq n \leq N}   \frac{u_n^{(-)}}{\sqrt{1+\big(u_n^{(-)}\big)^2}} , \quad  
B_{N} = \max_{2 \leq n \leq N}   \frac{u_n^{(+)}}{\sqrt{1+\big(u_n^{(+)}\big)^2}}  ,
\]
where $u_n^{(\pm)}$ were defined in \eqref{alternativeU}.	Then
\[   
\supp (\mu(0;\cdot)) \subseteq \mathcal{A}[\vartheta_1, \vartheta_2] ,
\]
where
$$
\vartheta_1=2 \lim_{N \to \infty}   \arccos(B_N), \quad \vartheta_2=2 \lim_{N \to \infty}   \arccos(A_N).
$$
\end{theo}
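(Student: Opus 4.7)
The plan is to combine Theorem \ref{Thm-Estimates-for-Zero-Bounds-Wn} (localization of the zeros of $R_N$ via the scaling sequence) with Theorem \ref{ThmA-Support} (support bounds from zero clustering) and then pass to the limit, exploiting the monotonicity of $A_N$ and $B_N$ with respect to $N$.

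First I would observe the basic monotonicity: since $A_N$ is defined as a minimum over $\{2,\dots,N\}$ of a function of $n$ via the formula \eqref{def:AB}, the sequence $\{A_N\}_{N\ge 2}$ is non-increasing in $N$; symmetrically, $\{B_N\}_{N\ge 2}$ is non-decreasing. Both are bounded in $[-1,1]$, so the limits $A_\infty=\lim_N A_N$ and $B_\infty=\lim_N B_N$ exist, with $A_\infty\le B_\infty$. By continuity and monotonicity of $\arccos$, the quantities $\vartheta_1=2\arccos(B_\infty)=\lim_N 2\arccos(B_N)$ and $\vartheta_2=2\arccos(A_\infty)=\lim_N 2\arccos(A_N)$ are well defined and satisfy $0\le\vartheta_1\le\vartheta_2\le 2\pi$, with $2\arccos(B_N)\ge\vartheta_1$ and $2\arccos(A_N)\le\vartheta_2$ for every $N$.

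Next, I would invoke Theorem \ref{Thm-Estimates-for-Zero-Bounds-Wn} applied to each $N\ge 2$ with the given scaling sequence truncated to $\{q_{n+1}\}_{n=1}^{N-1}$, so that all zeros of $R_N$ lie in the open arc $\mathcal{A}\bigl(2\arccos(B_N),2\arccos(A_N)\bigr)$. Combining with the two inequalities in the previous paragraph yields the inclusion
\[
\mathcal{A}\bigl(2\arccos(B_N),2\arccos(A_N)\bigr)\subseteq \mathcal{A}(\vartheta_1,\vartheta_2),
\]
so that the zeros of $R_N$ belong to the open arc $\mathcal{A}(\vartheta_1,\vartheta_2)$ for every $N\ge 2$.

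Finally, assuming the non-degenerate case $\vartheta_1<\vartheta_2$, I would apply Theorem \ref{ThmA-Support} directly: its hypothesis is met for all sufficiently large $n$ (in fact for all $n$), so
\[
\supp \mu(t;\cdot)\cap \mathcal{A}(0,\vartheta_1)=\emptyset=\supp\mu(t;\cdot)\cap\mathcal{A}(\vartheta_2,2\pi),\qquad 0\le t<1,
\]
which is equivalent to $\supp\mu(t;\cdot)\subseteq\mathcal{A}[\vartheta_1,\vartheta_2]$. Specializing to $t=0$ gives the claim. The degenerate cases ($\vartheta_1=\vartheta_2$, or one of the bounds $A_\infty=-1$, $B_\infty=1$ reducing the arc to the full circle) need only a brief comment: in the former the statement is vacuous (or reduces to a single mass point), and in the latter the inclusion is trivial. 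I do not anticipate a serious obstacle; the only delicate point is verifying that the open-arc hypothesis of Theorem~\ref{ThmA-Support} is correctly obtained from the limit arc, which is precisely what the monotonicity of $A_N$ and $B_N$ provides.
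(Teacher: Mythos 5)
Your proposal is correct and follows exactly the route the paper intends: apply Theorem \ref{Thm-Estimates-for-Zero-Bounds-Wn} for each $N$ with the truncated scaling sequence, use the monotonicity of $\{A_N\}$ (decreasing) and $\{B_N\}$ (increasing) to place all zeros of $R_N$ inside the limit arc $\mathcal{A}(\vartheta_1,\vartheta_2)$, and then invoke Theorem \ref{ThmA-Support}. The paper gives only this sketch (noting the monotonicity and the combination of the two theorems), so your more detailed write-up, including the remark on degenerate cases, is a faithful and complete version of the same argument.
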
 
Clearly, in Theorem \ref{Thm-Estimate-1-for-Support},   $\{B_N\}$ (resp., $\{A_N\}$) is an increasing (resp., decreasing) sequence, thus the limits exist.

Theorem \ref{Thm-Restriction-1-for-cn-dn2} now is a direct corollary of Theorem~\ref{Thm-Estimate-1-for-Support}. Indeed,  if $\{c_n\}$ and $\{d_{n+1}\}$ are such that 
\[
u_n \geq  \cot(\vartheta_2/2)\quad \mbox{and} \quad v_n \leq \cot(\vartheta_1/2), \quad n \geq 2,
\]
then by Theorem \ref{Thm-Estimate-1-for-Support} the zeros of $R_n$, $n\geq 1$, stay within $\mathcal{A}(\vartheta_1, \vartheta_2)$.  Hence, by Theorem \ref{ThmA-Support} the support of the associated measure $\mu(0;.)$ also belongs to the closed arc $\mathcal{A}[\vartheta_1, \vartheta_2]$. This concludes the proof. 

Similarly, with the use of Theorem \ref{Thm-Estimates2-for-Zero-Bounds-Wn} we have the following. 

\begin{theo} \label{Thm-Estimate-2-for-Support}
Given the real sequence $\{c_n\}_{n=1}^{\infty}$ and the non SP positive chain sequence  $\{d_{n+1}\}_{n=1}^{\infty}$,  let $\mu(t;.)$, $0 \leq t < 1$,  be the corresponding family of positive measures  \eqref{Eq-Family-of-Measures}.   
If $\{q_{n+1}\}_{n=1}^{\infty}$ is a scaling sequence  for	$\{ d_{n+1}\}_{n=1}^{\infty}$, define 
\[
q_{1}^{(1)} = q_{2}, \quad q_{n}^{(1)} = \max\{q_{n},q_{n+1}\}, \quad  n \geq 2, 
\]
as well as
\begin{equation*}
\dsp u_{1,n} = \frac{c_{n}+ \sqrt{c_{n}^{2}+ (1-q_{n}^{(1)})}}{1+\sqrt{q_{n}^{(1)}}} \quad \mbox{and} \quad 
\dsp v_{1,n} = \frac{1+\sqrt{q_{n}^{(1)}}}{-c_{n}+ \sqrt{c_{n}^{2}+ (1-q_{n}^{(1)})}}.  
\end{equation*}
Then
\[   
	   \supp (\mu(0;\cdot)) \subseteq \mathcal{A}[\vartheta_1, \vartheta_2] ,
\]
where
\[
	\cot(\vartheta_1/4)= \sup_{n\geq 1}   v_{1,n}   \quad \mbox{and} \quad \cot(\vartheta_2/4)= \inf_{n\geq 1}   u_{1,n}.
\]

\end{theo}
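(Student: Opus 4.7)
The plan is to mimic, almost verbatim, the short argument by which Theorem~\ref{Thm-Restriction-1-for-cn-dn2} was deduced from Theorem~\ref{Thm-Estimate-1-for-Support} in the paragraph immediately following the statement of the latter, only replacing the engine: instead of the zero-localization bound provided by Theorem~\ref{Thm-Estimates-for-Zero-Bounds-Wn}, I would invoke Theorem~\ref{Thm-Estimates2-for-Zero-Bounds-Wn}. The strategy splits into two stages: (i) obtain a fixed open arc containing the zeros of \emph{all} the polynomials $R_n$, $n\ge 1$; (ii) transfer this information to $\supp\mu(0;\cdot)$ by means of Theorem~\ref{ThmA-Support}.

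For stage (i), fix $N\ge 2$ and truncate the given scaling sequence to $\{q_{n+1}\}_{n=1}^{N-1}$, which is still a scaling sequence for the finite positive chain sequence $\{d_{n+1}\}_{n=1}^{N-1}$. Theorem~\ref{Thm-Estimates2-for-Zero-Bounds-Wn} then produces $A_N,B_N\in(-1,1)$ such that the zeros of $\mathcal W_N$ lie in $(A_N,B_N)$, i.e.\ the zeros of $R_N$ lie in the arc $\mathcal A\bigl(2\arccos B_N,\,2\arccos A_N\bigr)$. A minor bookkeeping point: the values $q_n^{(1,N)}$ appearing in the finite version coincide with the $q_n^{(1)}$ of the present theorem for $n\le N-1$, and at the endpoint $q_N^{(1,N)}\le q_N^{(1)}$; replacing $q_n^{(1,N)}$ by $q_n^{(1)}$ uniformly in $n$ only enlarges the containing arc and therefore preserves validity.

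The limit stage rests on the elementary identity
\[
\frac{u^2-1}{u^2+1}=\cos\bigl(2\arccot u\bigr),\qquad u>0,
\]
combined with the monotonicity of both sides in $u$, which shows that the minimum defining $A_N$ is attained at the smallest $u_{1,n}$ with $n\le N$ and the maximum defining $B_N$ at the largest $v_{1,n}$ with $n\le N$. Hence $\{A_N\}$ is non-increasing and $\{B_N\}$ is non-decreasing, and the translated angles satisfy $2\arccos A_N\nearrow 4\arccot(\inf_n u_{1,n})=\vartheta_2$ and $2\arccos B_N\searrow 4\arccot(\sup_n v_{1,n})=\vartheta_1$. Consequently, for every $N\ge 1$, the zeros of $R_N$ lie in $\mathcal A(\vartheta_1,\vartheta_2)$. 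Theorem~\ref{ThmA-Support} (applied at $t=0$, invoking \eqref{emptyarcs}) then forces
\[
\supp\mu(0;\cdot)\cap\mathcal A(0,\vartheta_1)=\supp\mu(0;\cdot)\cap\mathcal A(\vartheta_2,2\pi)=\emptyset,
\]
which is precisely the claimed inclusion $\supp\mu(0;\cdot)\subseteq\mathcal A[\vartheta_1,\vartheta_2]$.

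I expect essentially no analytical difficulty beyond the trigonometric identity above; the only mildly delicate point is the edge case $q_n^{(1)}=1$ (so that $u_{1,n}=0$ or $v_{1,n}=+\infty$), which must be handled with the natural conventions $\arccot 0=\pi/2$ and $\arccot(+\infty)=0$, making the corresponding bound on the arc $\mathcal A[\vartheta_1,\vartheta_2]$ simply trivial on one side.
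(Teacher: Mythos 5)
Your proposal is correct and follows exactly the route the paper intends: the paper gives no explicit proof of Theorem~\ref{Thm-Estimate-2-for-Support}, stating only that it follows ``similarly'' by combining Theorem~\ref{Thm-Estimates2-for-Zero-Bounds-Wn} with Theorem~\ref{ThmA-Support}, which is precisely your two-stage argument. Your added bookkeeping (the comparison $q_N^{(1,N)}\le q_N^{(1)}$ only enlarges the arc, the identity $\frac{u^2-1}{u^2+1}=\cos(2\arccot u)$, and the degenerate case $q_n^{(1)}=1$) correctly fills in the details the paper leaves implicit.
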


The following corollary to Theorem \ref{Thm-Restriction-1-for-cn-dn2} is a consequence of Corollary \ref{Coro-Estimates-for-Zero-Bounds-Wn}. 

\begin{coro} \label{Coro-Restriction-1-for-cn-dn}
   Given the set of complex numbers  $\{\alpha_n\}_{n=0}^{\infty}$,  where $|\alpha_n| < 1$, $n\geq 0$, let $\mu$ be the nontrivial positive measure on the unit circle for which  $\{\alpha_n\}_{n=0}^{\infty}$ are the associated Verblunsky coefficients. Let the real sequence  $\{c_n\}$ and the positive chain sequence $\{d_{n+1}\}$ be  given by \eqref{cSeq}--\eqref{parametricSeq}. \\[-1ex] 
   
\noindent $a)$\  Let $c_n + c_{n+1} > 0$, $n \geq 1$. For a given $\vartheta_2$ such that $0 < \vartheta_2 \leq 2\pi$, \  if 
\[
    \cot(\vartheta_2/2) \leq \frac{c_{n}c_{n+1}-1}{c_{n} + c_{n+1}}, \quad n \geq 1,
\]   
then the support of $\mu$ lies within the closed arc $\mathcal{A}[0, \vartheta_2]$. \\[-1ex]

\noindent $b)$\  Let $c_n + c_{n+1} < 0$, $n \geq 1$. For a given $\vartheta_1$ such that $0 \leq \vartheta_1 < 2\pi$, \  if   
\[
    \frac{c_{n}c_{n+1}-1}{c_{n} + c_{n+1}} \leq \cot(\vartheta_1/2), \quad n \geq 1,
\]   
then the support of $\mu$ lies within the closed arc $\mathcal{A}[\vartheta_1, 2\pi]$.
\end{coro}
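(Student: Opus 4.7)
The plan is to combine Corollary \ref{Coro-Estimates-for-Zero-Bounds-Wn}, applied with the trivial scaling sequence $q_{n+1}\equiv 1$, with Theorem \ref{ThmA-Support} on the support of $\mu$. I sketch part (a); part (b) is completely symmetric.

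When $q_{n+1}=1$, the quadratic $P(u)$ in \eqref{mainpolynomial} degenerates to the linear polynomial $-(c_n+c_{n+1})u+(c_nc_{n+1}-1)$, whose unique root equals $(c_nc_{n+1}-1)/(c_n+c_{n+1})$. The hypothesis $c_n+c_{n+1}>0$ for $n\geq 1$, together with the sign convention from the end of Section \ref{Sec-Intro} (see also Remark \ref{remarkq1}), forces $u_{n+1}^{(+)}=+\infty$ and $u_{n+1}^{(-)}=(c_nc_{n+1}-1)/(c_n+c_{n+1})$, so that in \eqref{def:AB} the maximum collapses to $B_N=1$. Corollary \ref{Coro-Estimates-for-Zero-Bounds-Wn}(a) then ensures that, for every $N\geq 2$, the zeros of $R_N$ lie in the open arc $\mathcal{A}(0,2\arccos A_N)$.

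To translate the cotangent bound in the hypothesis into a bound on $A_N$, note that $\sin(\vartheta_2/2)>0$ for $\vartheta_2\in(0,2\pi)$ (the case $\vartheta_2=2\pi$ being trivial), so $\cos(\vartheta_2/2)=\cot(\vartheta_2/2)/\sqrt{1+\cot^2(\vartheta_2/2)}$. Combined with the monotonicity of $t\mapsto t/\sqrt{1+t^2}$, the assumption $\cot(\vartheta_2/2)\leq u_{n+1}^{(-)}$ upgrades to $\cos(\vartheta_2/2)\leq u_{n+1}^{(-)}/\sqrt{1+(u_{n+1}^{(-)})^2}$ for every $n\geq 1$, whence $\cos(\vartheta_2/2)\leq A_N$ and therefore $2\arccos A_N\leq \vartheta_2$ for every $N$. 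Consequently, the zeros of $R_N$ lie in $\mathcal{A}(0,\vartheta_2)$ for all $N\geq 2$, and Theorem \ref{ThmA-Support} applied with $\vartheta_1=0$ yields $\supp\mu\cap\mathcal{A}(\vartheta_2,2\pi)=\emptyset$, i.e., $\supp\mu\subseteq\mathcal{A}[0,\vartheta_2]$. Part (b) follows by the mirror argument, using Corollary \ref{Coro-Estimates-for-Zero-Bounds-Wn}(b) and the symmetric limit for $B_N$. The only subtlety is the bookkeeping of the infinite values $u_{n+1}^{(\pm)}=\pm\infty$, already encoded in the convention of Remark \ref{remarkq1}, which is precisely what collapses one of the endpoints to $z=1$ and produces the one-sided arc in the conclusion.
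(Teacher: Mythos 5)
Your argument is correct and is exactly the route the paper intends: the paper gives no explicit proof beyond noting that the corollary "is a consequence of Corollary \ref{Coro-Estimates-for-Zero-Bounds-Wn}", and your write-up simply fills in the details of that reduction (trivial scaling sequence, the degeneration of $P(u)$ to a linear polynomial with root $(c_nc_{n+1}-1)/(c_n+c_{n+1})$, the monotone transfer from the cotangent bound to $A_N$, and the final appeal to Theorem \ref{ThmA-Support}). Nothing to correct.
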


Using Theorem \ref{Thm-Estimate-2-for-Support} we can also state the following.

\begin{theo} \label{Thm-Restriction-2-for-cn-dn}
Given a nontrivial positive measure on $\T$ whose Verblunsky coefficients are $\{\alpha_n\}_{n=0}^{\infty}$,  with $|\alpha_n| < 1$, $n\geq 0$, and whose $(c_n,d_n)$-parametrization is  given by \eqref{cSeq}--\eqref{parametricSeq}.
If $\{q_{n+1}\}_{n=1}^{\infty}$ is a scaling sequence  for	$\{ d_{n+1}\}_{n=1}^{N-1}$, define 
$$
q_{1}^{(1)} = q_{2}, \quad q_{n}^{(1)} = \max\{q_{n},q_{n+1}\}, \quad  n \geq 2. 
$$ 
 For $0 \leq \vartheta_1 < \vartheta_2 \leq 2\pi$,  if 
\[
  \begin{array}l
    \dsp  \cot(\vartheta_2/4) \leq \frac{c_{n}+ \sqrt{c_{n}^{2}+ (1-q_{n}^{(1)})}}{1+\sqrt{q_{n}^{(1)}}} \quad \mbox{and} \quad 
    \dsp  \frac{1+\sqrt{q_{n}^{(1)}}}{-c_{n}+ \sqrt{c_{n}^{2}+ (1-q_{n}^{(1)})}} \leq \cot(\vartheta_1/4),  
  \end{array}
\]
for $n \geq 1$, then the support of $\mu(0;.)$ stays within the closed arc $\mathcal{A}[\vartheta_1, \vartheta_2]$.  

\end{theo}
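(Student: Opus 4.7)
The statement is essentially a direct reformulation of Theorem~\ref{Thm-Estimate-2-for-Support} in the form of a sufficient condition, so my plan is to reduce it to that theorem by translating the per-index hypotheses into the supremum/infimum conditions that appear there.

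First I would recall the quantities $u_{1,n}$ and $v_{1,n}$ defined in Theorem~\ref{Thm-Estimate-2-for-Support} and note that the expressions on the right-hand side of the two hypotheses in Theorem~\ref{Thm-Restriction-2-for-cn-dn} are precisely $u_{1,n}$ and $v_{1,n}$. Thus the assumptions read
\[
\cot(\vartheta_2/4)\le u_{1,n},\qquad v_{1,n}\le \cot(\vartheta_1/4),\qquad n\ge 1,
\]
which can be rewritten as
\[
\cot(\vartheta_2/4)\le \inf_{n\ge 1} u_{1,n},\qquad \sup_{n\ge 1} v_{1,n}\le \cot(\vartheta_1/4).
\]

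Next, I would define auxiliary angles $\tilde\vartheta_1,\tilde\vartheta_2\in(0,2\pi]$ via
\[
\cot(\tilde\vartheta_2/4)=\inf_{n\ge 1} u_{1,n},\qquad \cot(\tilde\vartheta_1/4)=\sup_{n\ge 1} v_{1,n},
\]
so that by Theorem~\ref{Thm-Estimate-2-for-Support} we have $\supp(\mu(0;\cdot))\subseteq \mathcal{A}[\tilde\vartheta_1,\tilde\vartheta_2]$. Using that $t\mapsto \cot(t/4)$ is strictly decreasing on $(0,2\pi]$ (from $+\infty$ down to $0$), the inequalities displayed above translate into $\tilde\vartheta_2\le \vartheta_2$ and $\vartheta_1\le \tilde\vartheta_1$. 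Hence $\mathcal{A}[\tilde\vartheta_1,\tilde\vartheta_2]\subseteq \mathcal{A}[\vartheta_1,\vartheta_2]$, and the desired inclusion $\supp(\mu(0;\cdot))\subseteq \mathcal{A}[\vartheta_1,\vartheta_2]$ follows.

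There is essentially no obstacle: the main content has already been packaged in Theorem~\ref{Thm-Estimate-2-for-Support} (and, behind it, in Theorem~\ref{Thm-Estimates2-for-Zero-Bounds-Wn} and Theorem~\ref{ThmA-Support}). The only points requiring a moment of care are the monotonicity of $\cot(\cdot/4)$ on the relevant interval and the boundary cases $\vartheta_1=0$ (where $\cot(\vartheta_1/4)=+\infty$ makes the second hypothesis vacuous) and $\vartheta_2=2\pi$ (where $\cot(\vartheta_2/4)=0$ makes the first hypothesis reduce to $u_{1,n}\ge 0$, which is automatic since $\sqrt{c_n^2+(1-q_n^{(1)})}\ge |c_n|$). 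These should be verified explicitly but pose no real difficulty.
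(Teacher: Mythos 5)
Your proposal is correct and follows exactly the route the paper intends: the paper introduces this theorem with the single remark that it follows ``using Theorem~\ref{Thm-Estimate-2-for-Support}'', and your argument is precisely that reduction, made explicit via the monotonicity of $\cot(\cdot/4)$ on $(0,2\pi]$ together with a check of the boundary cases $\vartheta_1=0$ and $\vartheta_2=2\pi$. Nothing is missing.
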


We finish this section providing a proof of Theorem \ref{Thm-Restriction-3-for-cn-dn} stated in the introduction.

\begin{proof}[Proof of Theorem \ref{Thm-Restriction-3-for-cn-dn}.]
  Let us consider the measure $\tilde{\mu}$ obtained from $\mu$ by a rotation of angle $2\pi-\vartheta_2$.  That is,  $\tilde{\mu}(z) = \mu(z e^{-i(2\pi-\vartheta_2)})$. Hence, if $\mu$ has a gap in $\mathcal{A}(\vartheta_1, \vartheta_2)$ then the support of the measure $\tilde{\mu}$ is in $\mathcal{A}[0, 2\pi+\vartheta_1-\vartheta_2]$. 

It is known that the Verblunsky coefficients of  $\tilde{\mu}$ in terms of the Verblunsky coefficients $\mu$ are given by
\[
     \tilde{\alpha}_{n} = e^{-i(n+1)(2\pi-\vartheta_2)} \alpha_{n} = e^{i(n+1)\vartheta_2}  \alpha_{n}, \quad n \geq 0. 
\]
Hence, if $\tilde{\Phi}_n$ are the OPUC with respect to the measure $\tilde{\mu}$ then by Theorem \ref{Thm-Zeros-in-Gaps} (part b) the zeros of the polynomials 
\[
   \tilde{R}_n(z) = \frac{\prod_{j=1}^{n} \big[1- \tilde{\tau}_{j-1}\tilde{\alpha}_{j-1}\big]}{\prod_{j=1}^{n} \big[1-\Re(\tilde{\tau}_{j-1}\tilde{\alpha}_{j-1})\big]} \frac{z\tilde{\Phi}_{n}(z) - \tilde{\tau}_{n} \tilde{\Phi}_{n}^{\ast}(z)}{z-1}, \quad n\geq 1, 
\]
lie within the arc $\mathcal{A}(0, 2\pi+\vartheta_1-\vartheta_2)$. From the results given in Section \ref{sec:CDandPositiveCS}, $\tilde{\tau}_n$ above are such that 
\begin{equation} \label{Eq-RF-tildeTau}
      \tilde{\tau}_{0} = \frac{\tilde{\Phi}_0(1)}{\tilde{\Phi}_{0}^{\ast}(1)} = 1 \quad \mbox{and} \quad 
      \tilde{\tau}_{n} = \frac{\tilde{\Phi}_{n}(1)}{\tilde{\Phi}_{n}^{\ast}(1)} = \tilde{\tau}_{n-1} \frac{1 - \overline{\tilde{\tau}_{n-1}\tilde{\alpha}_{n-1}}}{1 - \tilde{\tau}_{n-1}\tilde{\alpha}_{n-1}}, \ \ n \geq 1.
\end{equation}
Moreover, the sequence $\{\tilde{R}_n\}$ satisfies the three term recurrence formula 
\[
   \tilde{R}_{n+1}(z) = \left[(1+ic_{n+1}^{(\vartheta_2)})z + (1-ic_{n+1}^{(\vartheta_2)})\right] \tilde{R}_{n}(z) - 4d_{n+1}^{(\vartheta_2)} z \tilde{R}_{n-1}(z),
\]
with $\tilde{R}_{0}(z) = 1$ and $\tilde{R}_{1}(z) = (1+ic_{1}^{(\vartheta_2)})z + (1-ic_{1}^{(\vartheta_2)})$, where 
\begin{equation*}\label{Eq-CoeffsTTRR-2}
  \begin{array}{l}
    \dsp c_{n}^{(\vartheta_2)} = \frac{-\Im (\tilde{\tau}_{n-1}\tilde{\alpha}_{n-1})} {1-\Re(\tilde{\tau}_{n-1}\tilde{\alpha}_{n-1})}  \quad  \mbox{and} \quad  d_{n+1}^{(\vartheta_2)} = (1-g_{n})g_{n+1},
  \end{array} 
\end{equation*}
for $n \geq 1$, with  
\[
       g_{n} = \frac{1}{2} \frac{\big|1 - \tilde{\tau}_{n-1} \tilde{\alpha}_{n-1}\big|^2}{\big[1 - \Re(\tilde{\tau}_{n-1}\tilde{\alpha}_{n-1})\big]}, \quad n \geq 1. 
\]
With $\tau_{n-1}^{(\vartheta_2)} = e^{i n \vartheta_2} \tilde{\tau}_{n-1}$, $n \geq 1$, we easily observe that $\tilde{\tau}_{n-1} \tilde{\alpha}_{n-1} = \tau_{n-1}^{(\vartheta_2)} \alpha_{n-1}$, $n \geq 1$. Hence, the coefficients $c_n^{(\vartheta_2)}$ and $d_{n+1}^{(\vartheta_2)}$ can be given as in the theorem, and the recurrence formula (\ref{Eq-RF-tildeTau}) for $\tilde{\tau}_n$ leads to the recurrence formula for $\tau_n^{(\vartheta_2)}$ given in the theorem. 

Hence,  the theorem follows from Theorem \ref{Thm-General-Zero-Bounds-Wn}. \end{proof}

\setcounter{equation}{0}
\section{Further results about scaling sequences} \label{Sec-SORPCS}

As it follows from our previous considerations, scaling sequences $\{q_{n+1}\}$ play a fundamental role in the expression for the bounds for the extreme zeros. Recall that $q_{n+1}\equiv 1$ is always a scaling sequence. A natural question is which other constant sequences are scaling sequences for the given positive chain sequence. This can be answered in terms of the zeros of the symmetric polynomials $\mathcal{W}_n$ generated by the recurrence relation~\eqref{TTRR-for-Wn} with $c_n\equiv 0$.

Namely, a direct consequence of Theorem \ref{Thm-General-Zero-Bounds-Wn} is the following result:
\begin{lemma} \label{Lemma-Dominant-FPCS}
Let $N \geq 2$ and let $\{d_{n+1}\}_{n=1}^{N-1}$ be a finite positive chain sequence. Then the constant sequence $\{q_{n+1}=q\}_{n=1}^{N-1}$ is a finite scaling sequence for $\{d_{n+1}\}_{n=1}^{N-1}$ if and only if 
$$
q> x_{N,1}^2.
$$ 
Here, $x_{N,1}$ is the largest zero of the symmetric polynomial $\mathcal{W}_N$ obtained from the three term recurrence formula 
\[
    \mathcal{W}_{n+1}(x) = x\,\mathcal{W}_n(x) - d_{n+1} \mathcal{W}_{n-1}(x), \quad n = 1,2, \ldots , N-1,
\]
with 
$\mathcal{W}_0(x) = 1$ and $\mathcal{W}_1(x) = x$. 
\end{lemma}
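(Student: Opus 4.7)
The plan is to apply Theorem~\ref{Thm-General-Zero-Bounds-Wn} in the symmetric setting $c_n \equiv 0$, with the interval $(A,B) = (-\sqrt q,\sqrt q)$ for $q \in (0,1)$. With $c_n \equiv 0$, the quantity $\fd_{n+1}(x)$ from \eqref{definitionFD} simplifies to $d_{n+1}/x^2$, so condition $b)$ of Theorem~\ref{Thm-General-Zero-Bounds-Wn} applied at $x = \pm\sqrt q$ says precisely that $\{d_{n+1}/q\}_{n=1}^{N-1}$ is a finite positive chain sequence, which, together with $q \le 1$, is exactly the statement that the constant sequence $q_{n+1}\equiv q$ is a scaling sequence for $\{d_{n+1}\}_{n=1}^{N-1}$. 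Condition $a)$ of Theorem~\ref{Thm-General-Zero-Bounds-Wn} reduces to $-\sqrt{q/(1-q)} < 0 < \sqrt{q/(1-q)}$, which is automatic.

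Next, I would exploit the symmetry induced by $c_n \equiv 0$: a straightforward induction on the recurrence shows that $\mathcal{W}_n(-x) = (-1)^{n} \mathcal{W}_n(x)$, so the zeros of $\mathcal{W}_N$ are symmetric about the origin. In particular the smallest zero is $-x_{N,1}$, and hence the assertion that all zeros of $\mathcal{W}_N$ lie in $(-\sqrt q,\sqrt q)$ is equivalent to $x_{N,1} < \sqrt q$, i.e.~to $q > x_{N,1}^2$. Combined with Theorem~\ref{Thm-General-Zero-Bounds-Wn}, this yields the desired equivalence for every $q \in (0,1)$.

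The only remaining case is the boundary $q = 1$, which sits outside the open-interval hypothesis $(A,B)\subseteq(-1,1)$ of Theorem~\ref{Thm-General-Zero-Bounds-Wn} and must be handled separately. This is immediate: $\{d_{n+1}\}$ is itself a positive chain sequence, so $q=1$ is trivially a scaling sequence; and since all zeros of $\mathcal{W}_N$ lie in $(-1,1)$ (as recalled in Section~\ref{Sec-PrelimResults-3}), one has $x_{N,1}^2 < 1 = q$, so both sides of the claimed equivalence hold. The plan therefore has no serious obstacle: the symmetry identity and the treatment of the endpoint $q=1$ are the only points needing care, and both are essentially bookkeeping on top of Theorem~\ref{Thm-General-Zero-Bounds-Wn}.
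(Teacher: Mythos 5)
Your proposal is correct and follows exactly the route the paper intends: the paper states this lemma as a direct consequence of Theorem~\ref{Thm-General-Zero-Bounds-Wn} without further detail, and your specialization to $c_n\equiv 0$, $(A,B)=(-\sqrt{q},\sqrt{q})$, together with the symmetry $\mathcal{W}_n(-x)=(-1)^n\mathcal{W}_n(x)$ and the separate treatment of $q=1$, supplies precisely the missing bookkeeping. The only detail worth making explicit is that Theorem~\ref{Thm-General-Zero-Bounds-Wn} constrains the zeros of all $\mathcal{W}_n$ with $1\le n\le N$, which reduces to a condition on $\mathcal{W}_N$ alone via the interlacing property \eqref{Eq-InterlacingZeros-Wn}.
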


Applying this Lemma with  $\{d_{n+1}\}=\{1/4\}$ (That is, $\mathcal{W}_{n}$ are the Chebyshev polynomial of the second kind), we can conclude that the largest finite constant positive chain sequence of $N-1$ elements is
\[
   \{d^{(N-1)}\}_{n=1}^{N-1} = \Big\{\frac{1}{4\cos^{2}(\pi/(N+1))}\Big\}_{n=1}^{N-1};
\]
this result was already established by Ismail and Li \cite{IsmailLi-1992}. 

We can extend this statement to infinite positive chain sequences $\{d_{n+1}\}_{n=1}^{\infty}$:
\begin{lemma} \label{Lemma-Dominant-PCS}
Let  $\{d_{n+1}\}_{n=1}^{\infty}$ be a positive chain sequence. Then the constant sequence $\{d_{n+1}=q\}_{n=1}^{\infty}$ is a finite scaling sequence for $\{d_{n+1}\}_{n=1}^{\infty}$ if and only if 
\[
      q\geq \xi^2=\lim_{n\to \infty} x_{n,1}^2,
\]
where  $x_{n,1}$ is the largest zero of the symmetric polynomial $\mathcal{W}_n$ given by the three term recurrence formula 
\[
    \mathcal{W}_{n+1}(x) = x\,\mathcal{W}_n(x) - d_{n+1} \mathcal{W}_{n-1}(x), \quad n\geq 1,
\]
with 
$\mathcal{W}_0(x) = 1$ and $\mathcal{W}_1(x) = x$.
\end{lemma}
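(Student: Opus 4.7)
The plan is to reduce Lemma~\ref{Lemma-Dominant-PCS} to its finite counterpart Lemma~\ref{Lemma-Dominant-FPCS} via a truncation-and-limit argument, exploiting the fact that positive chain sequences are hereditary under restriction.

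First I would establish that $\xi=\lim_{N\to\infty}x_{N,1}$ is well defined and at most $1$. The polynomials $\mathcal{W}_n$ of Lemma~\ref{Lemma-Dominant-PCS} are a particular case of those generated by \eqref{TTRR-for-Wn} with $c_n\equiv 0$, so the interlacing \eqref{Eq-InterlacingZeros-Wn} applies and $\{x_{N,1}\}_{N\geq 2}$ is strictly increasing with $x_{N,1}\in(-1,1)$, hence the limit exists and $\xi\leq 1$.

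For necessity, suppose $\{q_{n+1}=q\}_{n=1}^{\infty}$ is a scaling sequence for $\{d_{n+1}\}_{n=1}^{\infty}$, so that $\{d_{n+1}/q\}_{n=1}^{\infty}$ is a positive chain sequence with some parameter sequence $\{g_n\}_{n\geq 1}$. For every $N\geq 2$, the restriction $\{g_n\}_{n=1}^{N}$ is a valid parameter sequence for the finite chain $\{d_{n+1}/q\}_{n=1}^{N-1}$, which is therefore a finite positive chain sequence. By Lemma~\ref{Lemma-Dominant-FPCS} this forces $q>x_{N,1}^2$ for every $N\geq 2$, and letting $N\to\infty$ yields $q\geq \xi^2$.

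For sufficiency, assume $q\geq \xi^2$. Since $\{x_{N,1}\}$ is strictly increasing with limit $\xi$, we get $x_{N,1}^2<\xi^2\leq q$ for every $N\geq 2$, so by Lemma~\ref{Lemma-Dominant-FPCS} each truncation $\{d_{n+1}/q\}_{n=1}^{N-1}$ is a finite positive chain sequence. Now define $\{m_n\}_{n\geq 1}$ recursively by $m_1=0$ and $m_{n+1}=(d_{n+1}/q)/(1-m_n)$. This is precisely the minimal parameter sequence of each such truncation, and the FPCS property for every $N$ guarantees $0<m_n<1$ for all $n\geq 2$ (and $m_n<1$ is not lost in the limit because it must hold at every finite stage). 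Hence $\{m_n\}_{n\geq 1}$ is a valid parameter sequence for the infinite sequence $\{d_{n+1}/q\}_{n=1}^{\infty}$, which is therefore a positive chain sequence, so $\{q_{n+1}=q\}_{n=1}^{\infty}$ is a scaling sequence.

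The principal subtlety, rather than a hard obstacle, is the boundary case $q=\xi^2$: Lemma~\ref{Lemma-Dominant-FPCS} requires the strict inequality $q>x_{N,1}^2$, but strict monotonicity of $\{x_{N,1}\}$ towards $\xi$ ensures $x_{N,1}<\xi$ for every finite $N$, so the sufficiency argument carries over to the endpoint $q=\xi^2$ without any additional work.
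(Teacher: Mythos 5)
Your proof is correct. The paper states Lemma \ref{Lemma-Dominant-PCS} without an explicit proof, presenting it simply as the extension of Lemma \ref{Lemma-Dominant-FPCS} to infinite sequences; your truncation-and-limit argument --- restriction of parameter sequences for necessity, and assembling the minimal parameter sequence (whose terms are determined by finitely many $d_{n+1}$ and stay in $(0,1)$ at every finite stage) for sufficiency --- is exactly the standard way to supply the missing details, and your handling of the boundary case $q=\xi^2$ via the strict monotonicity $x_{N,1}\uparrow\xi$ correctly accounts for the passage from the strict inequality of the finite lemma to the non-strict one here.
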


We finish this section with an example of a positive chain sequence that we will use in Section \ref{Sec-Examples}. For $\lambda \geq -1/2$, the sequence 
\begin{equation}\label{def:Gegenbauer}
   d_{1,n}^{(\lambda)}  =  d_{n+1}^{(\lambda)} = \frac{1}{4} \frac{n(n+2\lambda+1)}{(n+\lambda)(n+\lambda+1)}, \quad n \geq 1,
   \end{equation}  
is a positive chain sequence. 
We have
\[
    d_{1,n}^{(\lambda)}= (1-\mathfrak{m}_{n-1})\mathfrak{m}_{n} =  (1-M_{1,n-1})M_{1,n} , \quad n \geq 1,
\]  
where
\[
     \mathfrak{m}_{n} = \frac{n}{2(n+\lambda+1)} \quad \mbox{and} \quad \quad M_{1,n} = \frac{n+2\lambda+1}{2(n+\lambda+1)}, \quad n \geq 0,
\]
is the minimal and maximal parameter sequences of $\{d_{1,n}^{(\lambda)}\}_{n=1}^{\infty}$, respectively (see \cite{Costa-Felix-Ranga-JAT2013}). Observe that for $\lambda = -1/2$  these parameter sequences coincide, so $\{d_{1,n}^{(-1/2)}\}_{n=1}^{\infty}$ is a single parameter positive chain sequence. 

Polynomials $\{\mathcal{W}_n\}_{n=0}^{\infty}$ generated by $\mathcal{W}_0(x) = 1$, $ \mathcal{W}_1(x) = x$ and
$$
    \mathcal{W}_{n+1}(x) = x\,\mathcal{W}_n(x) - d_{n+1}^{(\lambda)}  \mathcal{W}_{n-1}(x), \quad n\geq 1,
$$
coincide with the monic ultraspherical (or Gegenbauer) polynomials $\{C_n^{(\lambda+1)}\}$, which for  $\lambda= -1/2$ are the monic Legendre polynomials. 

We also have
\[  
   \dsp\frac{1}{4} - d_{n+1}^{(\lambda)} 
     = \frac{1}{4} \frac{\lambda(\lambda+1)}{(n+\lambda)(n+\lambda+1)}, 
          \quad n\geq 1,
\]
and
\[ 
   \begin{array}{ll} 
     \dsp d_{n+1}^{(-1/2)} - d_{n+1}^{(\lambda)}  
     &= \dsp \frac{1}{4}  \frac{1/4}{(n^2-1/4)} 
        + \frac{1}{4} \frac{\lambda(\lambda+1)}{(n+\lambda)(n+\lambda+1)}\\[3ex]
     &= \dsp \frac{n}{4}\frac{n(\lambda+1/2)^2 + (\lambda+1/2)/2} 
                         {(n^2-1/4)(n+\lambda)(n+\lambda+1)}, \quad n\geq 1.
   \end{array} 
\]
Thus,  we can state the following:
\begin{lemma}  \label{Lemma-Dominant-lambda-FPCS}
Let $\lambda > -1/2$ and $N \geq 2$. Then the sequence 
$\dsp \{[\cos(\pi/(2N))]^{-2} d_{n+1}^{(-1/2)}\}_{n=1}^{N-1}$ is a finite positive chain sequence such that 
\[ 
    d_{n+1}^{(\lambda)} < d_{n+1}^{(-1/2)} 
      < \frac{1}{\cos^2(\pi/(2N))}d_{n+1}^{(-1/2)} 
      < \frac{1}{(x_1^{(N)})^2} d_{n+1}^{(-1/2)}, \quad n = 1, 2, \ldots, N-1.  
\]
Here, $x_1^{(N)}$ is the largest zero of the $N^{th}$ degree Legendre polynomial. 
\end{lemma}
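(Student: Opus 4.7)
I would decouple the four assertions of the lemma and handle them in three stages. The leftmost inequality $d_{n+1}^{(\lambda)} < d_{n+1}^{(-1/2)}$ is immediate from the identity
\[
d_{n+1}^{(-1/2)} - d_{n+1}^{(\lambda)} = \frac{n}{4}\,\frac{n(\lambda+\tfrac{1}{2})^{2} + (\lambda+\tfrac{1}{2})/2}{(n^{2}-\tfrac{1}{4})(n+\lambda)(n+\lambda+1)}
\]
displayed just before the statement, whose right-hand side is strictly positive when $\lambda>-1/2$ and $n\geq 1$. The middle inequality is the triviality $\cos^{2}(\pi/(2N))<1$, which holds for every $N\geq 2$.

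The chain-sequence property and the rightmost inequality would be treated jointly via Lemma~\ref{Lemma-Dominant-FPCS}. The paragraph preceding the statement identifies the polynomials $\mathcal{W}_n$ generated by the recurrence $\mathcal{W}_{n+1}(x)=x\,\mathcal{W}_{n}(x)-d_{n+1}^{(-1/2)}\,\mathcal{W}_{n-1}(x)$, with $\mathcal{W}_{0}=1$ and $\mathcal{W}_{1}=x$, as the monic Legendre polynomials; consequently the quantity $x_{N,1}$ appearing in Lemma~\ref{Lemma-Dominant-FPCS} equals exactly $x_{1}^{(N)}$. That lemma then tells us that the constant sequence $\{q_{n+1}=q\}_{n=1}^{N-1}$ is a finite scaling sequence for $\{d_{n+1}^{(-1/2)}\}_{n=1}^{N-1}$ if and only if $q>(x_{1}^{(N)})^{2}$. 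Choosing $q=\cos^{2}(\pi/(2N))$, both the chain-sequence assertion and the strict bound $\cos^{-2}(\pi/(2N))<(x_{1}^{(N)})^{-2}$ collapse simultaneously to the single inequality
\[
x_{1}^{(N)} < \cos(\pi/(2N)).
\]

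This bound on the extreme zero of the $N$-th degree Legendre polynomial is where I expect the main difficulty to lie. I would derive it via Sturm's comparison theorem applied to the Legendre equation in its Liouville normal form
\[
u''(\theta) + \bigl[(N+\tfrac{1}{2})^{2} + \tfrac{1}{4}\csc^{2}\theta\bigr]\,u(\theta)=0, \qquad u(\theta)=\sqrt{\sin\theta}\,P_{N}(\cos\theta),
\]
carefully exploiting the positive singular potential $\tfrac{1}{4}\csc^{2}\theta$ near $\theta=0$ to force the first positive zero of $u$ past $\pi/(2N)$. The delicate point will be extracting exactly the constant $\pi/(2N)$ rather than the looser $\pi/(2N+1)$ that a naive comparison with the test equation $v''+(N+\tfrac{1}{2})^{2}v=0$ would yield; alternatively, one may simply invoke a sharp form of Bruns' inequality for Legendre zeros as tabulated in Szeg\H{o}'s monograph. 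Once this strict bound is in hand, the preceding two paragraphs conclude the proof.
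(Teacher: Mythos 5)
Your decomposition coincides with the paper's: the first inequality comes from the positivity of the displayed difference \(d_{n+1}^{(-1/2)}-d_{n+1}^{(\lambda)}\) for \(\lambda>-1/2\), the middle one is trivial, and Lemma~\ref{Lemma-Dominant-FPCS} applied to \(\{d_{n+1}^{(-1/2)}\}\) (whose symmetric \(\mathcal{W}_n\) are the monic Legendre polynomials, so \(x_{N,1}=x_1^{(N)}\)) reduces everything else to the single bound \(x_1^{(N)}<\cos(\pi/(2N))\). At that point the paper does exactly what you list as your fallback: it invokes \cite[Thm.~6.21.3]{Szego-Book} and stops. The one place I would push back is your primary plan for that zero bound. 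In the Liouville form the term \(\tfrac14\csc^2\theta\) \emph{increases} the potential near \(\theta=0\), and by Sturm's theorem a larger potential makes the solution oscillate faster, i.e.\ it pulls the first positive zero of \(u\) \emph{toward} the origin; so the singular term cannot be ``exploited to force the first zero past \(\pi/(2N)\)'' --- it is precisely the obstruction. A lower bound on \(\theta_1\) requires a \emph{majorant} potential whose solution still has a late first zero, e.g.\ bounding \(\csc^2\theta\le\theta^{-2}+1\) on \((0,\pi/2]\) and comparing with the Bessel solution \(\sqrt{\theta}\,J_0(\Lambda\theta)\), \(\Lambda^2=(N+\tfrac12)^2+\tfrac14\), which yields \(\theta_1\ge j_{0,1}/\Lambda>\pi/(2N)\) for \(N\ge2\) but only after a numerical check involving \(j_{0,1}\). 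Since you explicitly allow the citation route, which is the paper's actual argument, there is no gap in your proposal; but the Sturm sketch as written has its comparison pointing the wrong way and would need to be reversed before it could be completed.
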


\begin{proof}
By Lemma \ref{Lemma-Dominant-FPCS}, $q$ is the constant finite parameter sequence for $\dsp \{  d_{n+1}^{(-1/2)}\}_{n=1}^{N-1}$ if  $q> [x_1^{(N)}]^{2}$. It remains to use  that  $x_1^{(N)} < \cos(\pi/(2N))$, see \cite[Thm.\,6.21.3]{Szego-Book}.
\end{proof}

\setcounter{equation}{0}
\section{Examples}  \label{Sec-Examples}

\noindent {\bf Example 1 }(Geronimus polynomials). Let $\alpha \in \D$, and consider a measure $\mu^{(\alpha)}$ on $\T$ with constant Verblunsky coefficients,
\[
\alpha_{n} =-\overline{\Phi_{n+1}^{(\alpha)}(0)}  =  \alpha , \quad n \geq 0.
\]
This case was studied by Geronimus in \cite{Geronimus-AMSTransl-1977}; it is known  (see also \cite{GolinNevaiAssche-1995}, \cite{GolinNevaiPinterAssche-1999}  and \cite[p.\,83]{Simon-Book-p1}) that for $\Re(\alpha) + |\alpha|^2 \leq 0$, $\supp(\mu^{(\alpha)})= \mathcal{A}[\theta_{|\alpha|}, 2\pi- \theta_{|\alpha|}] $, while for $\Re(\alpha) + |\alpha|^2 > 0$, 
$\supp(\mu^{(\alpha)})= \mathcal{A}[\theta_{|\alpha|}, 2\pi- \theta_{|\alpha|}] \cup \{w_\alpha\}$, with
\begin{equation} \label{Eq-Infor-Measure}
\theta_{|\alpha|} = 2\arcsin(|\alpha|) \quad  \text{and} \quad 
 w_{\alpha} = \frac{1+\overline{\alpha}}{1+\alpha} .
\end{equation}
Additionally, 
\[
d \mu^{(\alpha)}(e^{i\theta})=  \frac{\sqrt{\cos^2(\theta_{|\alpha|}/2)-\cos^2(\theta/2)}}{2 \pi |1+\alpha|\,\sin((\theta-\vartheta_{\alpha})/2)} d\theta+ t_\alpha \delta_{w_\alpha}, 
\]
where 
\[
   t_{\alpha} =\max \left\{ \frac{2(\Re(\alpha) + |\alpha|^2)}{|1+\alpha|^2},0 \right\}.
\]

Let $\Im(\alpha) \neq 0$, and denote $\vartheta_{\alpha}:=\arg (w_\alpha) \in (-\pi , \pi)$. Consider the measure $\mu$ obtained by rotating $\mu^{(\alpha)}$  by the angle  $-\vartheta_{\alpha}$:  
\[
    \mu(z) = \mu^{(\alpha)}(w_{\alpha} z).
\]
It is well known that the Verblunsky coefficients associated with $\mu$ are $\alpha_{n} = w_{\alpha}^{n+1} \alpha$, $n \geq 0$.

The measure $\mu$ is such that its support has gaps in $(0, \theta_{|\alpha|}-\vartheta_{\alpha})$ and $(2\pi-\theta_{|\alpha|}-\vartheta_{\alpha}, 2\pi)$. In other words, the support of  $\mu(0;.)$, see (\ref{Eq-Family-of-Measures}), is the arc $\mathcal{A}[\theta_{|\alpha|}-\vartheta_{\alpha}, 2\pi-\theta_{|\alpha|}-\vartheta_{\alpha}]$.
Since $\sin(\theta_{|\alpha|}/2) = |\alpha|$ and $\sin(\vartheta_{\alpha}/2) = (1+\overline{\alpha})/|1+\alpha|$, we easily obtain that
\[
  \begin{array}{l}
   \cot\left((\theta_{|\alpha|}-\vartheta_{\alpha})/2\right) 
      \dsp = \frac{\sqrt{1-|\alpha|^2}\,\big(1 + \Re(\alpha)\big) - |\alpha|\, \Im(\alpha)} {|\alpha|\,\big(1 + \Re(\alpha)\big) + \sqrt{1-|\alpha|^2}|\, \Im(\alpha)} \\[3ex]
  \quad \dsp = \frac{\big[\sqrt{1-|\alpha|^2}\,\big(1 + \Re(\alpha)\big) - |\alpha|\, \Im(\alpha)\big]\big[|\alpha|\,\big(1 + \Re(\alpha)\big) + \sqrt{1-|\alpha|^2}|\, \Im(\alpha) \big]} {|\alpha|^2\,\big(1 + \Re(\alpha)\big)^2 - (1-|\alpha|^2)|\, \big(\Im(\alpha)\big)^2}
  \end{array}
\]
and
\[
  \begin{array}l 
   \cot\left((2\pi-\theta_{|\alpha|}-\vartheta_{\alpha})/2\right) 
     \dsp = \frac{\sqrt{1-|\alpha|^2}\,\big(1 + \Re(\alpha)\big) + |\alpha|\, \Im(\alpha)} {-|\alpha|\,\big(1 + \Re(\alpha)\big) + \sqrt{1-|\alpha|^2}|\, \Im(\alpha)}  \\[3ex]
   \quad \dsp = \frac{\big[\sqrt{1-|\alpha|^2}\,\big(1 + \Re(\alpha)\big) + |\alpha|\, \Im(\alpha)\big] \big[|\alpha|\,\big(1 + \Re(\alpha)\big) + \sqrt{1-|\alpha|^2}|\, \Im(\alpha) \big]} {-|\alpha|^2\,\big(1 + \Re(\alpha)\big)^2 + (1-|\alpha|^2)|\, \big(\Im(\alpha)\big)^2} .
  \end{array}
\]
With the observation $\big(1 + \Re(\alpha)\big)^2 + \big(\Im(\alpha)\big)^2 = |1+\alpha|^2$, we can write 
\begin{equation}\label{example11}
  \begin{array}{l}
   \cot\left((\theta_{|\alpha|}-\vartheta_{\alpha})/2\right) 
      \dsp = \frac{ -\Im(\alpha)\big(1 + \Re(\alpha)\big) + |\alpha|\sqrt{1-|\alpha|^2}\,|1+\alpha|^2} {\big(1 + \Re(\alpha)\big)^2 - (1-|\alpha|^2)|\, |1 + \alpha|^2}
  \end{array}
\end{equation}
and 
\begin{equation}\label{example12}
  \begin{array}{l}
   \cot\left((2\pi-\theta_{|\alpha|}-\vartheta_{\alpha})/2\right) 
      \dsp = \frac{ -\Im (\alpha)\big(1 + \Re (\alpha)\big) - |\alpha|\sqrt{1-|\alpha|^2}\,|1+\alpha|^2} {\big(1 + \Re (\alpha)\big)^2 - (1-|\alpha|^2)|\, |1 + \alpha|^2}. 
  \end{array}
\end{equation}

In order to apply Theorem \ref{Thm-Restriction-1-for-cn-dn2}  we need to calculate the values of   $\{c_n\}_{n=1}^{\infty}$ and $\{d_{n+1}\}_{n=1}^{\infty} = \{(1-g_n)g_{n+1}\}_{n=1}^{\infty}$ associated with the measure $\mu(z)= \mu^{(\alpha)}(w_{\alpha}z)$ using (\ref{cSeq}) and (\ref{parametricSeq}). From the recurrence relation for $\tau_n$ given by (\ref{recurrenceC}), we easily verify that $\tau_n = w_{\alpha}^{-n}$, $n \geq 0$. From this, 
\[
    c_n = \frac{- \Im (\alpha)}{1 + \Re (\alpha)}, \quad n \geq 1,
\]
and
\[
     d_{n+1} = \Big(1 - \frac{1-|\alpha|^2}{2 \big[1 + \Re (\alpha)\big]}\Big) \,\frac{1-|\alpha|^2}{2 \big[1 + \Re (\alpha)\big]} = \frac{(1-|\alpha|)^2|1+\alpha|^2 }{4 \big[1 + \Re (\alpha)\big]^2}, \quad n \geq 1. 
\]
Both the chain sequence $\{d_{n+1}\}_{n=1}^{\infty}$ and the parameter sequence $\{g_{n+1}\}_{n=0}^{\infty}$, with 
\[
    g_n = \frac{1-|\alpha|^2}{2 \big[1 + \Re (\alpha)\big]}, \quad n \geq 1,
\]
are constant  sequences. Thus, from Wall's criteria (see \cite[pg.\,101]{Chihara-Book}) $\{g_{n+1}\}_{n=0}^{\infty}$ is not a maximal parameter sequence if and only if $g_n < 1/2$, $n \geq 1$, or equivalently, when $\Re (\alpha) + |\alpha|^2 >  0$, which agrees with the criterion of existence of a pure point (at $z=1$) in  the rotated measure $\mu$. 

Observe that the constant chain sequence $\{d_{n+1}\}_{n=1}^{\infty}$ is such that $d_{n+1} \leq 1/4$, $n \geq 1$. Hence, 
\[
q_{n+1} = \frac{(1-|\alpha|)^2|1+\alpha|^2 }{\big(1 + \Re (\alpha)\big)^2},  \quad n \geq 1,
\]
is a constant scaling sequence for $\{d_{n+1}\}_{n=1}^{\infty}$. Since for $n \geq 1$,
\begin{equation*}
  \begin{array}{rl}
 c_{n}c_{n+1}+1 & \dsp = \frac{|1+\alpha|^2}{\big(1+\Re (\alpha)\big)^2},   \\[3ex]
 \sqrt{(c_{n}+c_{n+1})^2 - 4 (1-q_{n+1})(c_{n}c_{n+1} - q_{n+1})} 
 &\dsp = \sqrt{4q_{n+1}(c_{n}c_{n+1}+1-q_{n+1})} \\[1ex] 
 &\dsp= (1-|\alpha|^2)|\alpha|^2 \frac{|1+\alpha|^4}{\big(1+\Re (\alpha)\big)^4},
  \end{array}
\end{equation*}
Using the definition \eqref{alternativeU} and formulas \eqref{example11}--\eqref{example12} we obtain that
\[
  \begin{array}{ll}
    u_{n+1}^{(-)}=&\!\!u^{(-)}(c_n,c_{n+1},q_{n+1})= \cot\left((2\pi-\theta_{|\alpha|}-\vartheta_{\alpha})/2\right) \\[1ex] 
    u_{n+1}^{(+)}=&\!\!u^{(+)}(c_n,c_{n+1},q_{n+1})= \cot\left((\theta_{|\alpha|}-\vartheta_{\alpha})/2\right) . 
\end{array}
\]
In other words, the bounds given by Theorem \ref{Thm-Restriction-1-for-cn-dn2} are sharp.

\medskip

\noindent {\bf Example 2.}  Let $\mu^{(b_1, b_2, c)}$ be the probability measure on the unit circle for which the associated Verblunsky coefficients are given by
\[
   \alpha_{2n} = \frac{b_1 + i c}{1 + i c}, \quad  \alpha_{2n+1} = \frac{b_2 - i c}{1 + i c}, \quad n \geq 0.
\]
Here,  $b_1$, $b_2$ and  $c$ are real, and  $-1 < b_1 < 1$ and $-1 < b_2 < 1$.  

We can apply the results from \cite[Chapter 11]{Simon-Book-p2} in order to assert that the absolutely continuous part of the measure $\mu^{(b_1, b_2, c)}$ is in  $\mathcal{A}[\vartheta_1^{+}, \vartheta_1^{-}] \cup \mathcal{A}[\vartheta_2^{-}, \vartheta_2^{+}]$, where $0 \leq \vartheta_1^{+} < \vartheta_1^{-} \leq \pi$, $2\pi - \vartheta_2^{+} =  \vartheta_1^{+} = \vartheta^{+}(b_1,b_2, c)$ and $2\pi - \vartheta_2^{-} =  \vartheta_1^{-} = \vartheta^{-}(b_1,b_2, c)$, with 
\[
   \vartheta^{+}(b_1,b_2, c)  =\arccos\Big(\frac{c^2 - b_1b_2 + (1-b_1^2)^{1/2}(1-b_2^2)^{1/2}}{c^2 +1}\Big)  
\]
and
\[
   \vartheta^{-}(b_1,b_2, c) = \arccos\Big(\frac{c^2 - b_1b_2 - (1-b_1^2)^{1/2}(1-b_2^2)^{1/2}}{c^2 +1}\Big) .
\]
Moreover, the absolutely continuous part of $\mu^{(b_1, b_2, c)}$  is 
\[
      d\mu_{ac}^{(b_1,b_2,c)}(e^{i\theta}) = \const  \frac{\sqrt{(1-b_1^2)(1-b_2^2)-[c^2 - b_1b_2 - (c^2+1)\cos\theta]^2}}{|\sin\theta+c(1-\cos\theta)|}d\theta.
\]
From \cite{Simon-Book-p2} we also have that if $b_2+b_1 > 0$ then $\mu^{(b_1, b_2, c)}$ has a pure point at $z=1$; if $b_2 - b_1 > 0$ then $\mu^{(b_1, b_2, c)}$ has a pure point at $e^{i\vartheta(-c)}$, where
\[
    e^{i\vartheta(c)} =\frac{c^2-1}{c^2+1} + i \frac{2c}{c^2+1}. 
\] 

Now we consider the polynomials $R_n$ given by (\ref{Eq-CDKernel-POPUC}). It follows from parts $a)$ and $b)$ of Theorem \ref{Thm-Zeros-in-Gaps}  that the polynomial $R_n$ does not have any zeros outside the arc $\mathcal{A}[\vartheta_1^{+}, \vartheta_2^{+}]$.  From part $c)$ of Theorem \ref{Thm-Zeros-in-Gaps} we conclude that there is at most one zero of $R_n$ in the arc $\mathcal{A}[\vartheta_1^{-}, \vartheta_2^{-}]$ if $b_2-b_1 \leq 0$, and at most two zeros $R_n$ in $\mathcal{A}[\vartheta_1^{-}, \vartheta_2^{-}]$  if $b_2-b_1 > 0$.

Let us look now at the $(c_n,d_n)$-parametrization of $\mu^{(b_1,b_2,c)}$, and at the corresponding bounds for the zeros of $R_n$.  

From (\ref{recurrenceC}), $\tau_{2n} = 1$ and $\tau_{2n+1} = \frac{1+ic}{1-ic}$, $n \geq 0$.  Hence,  by (\ref{cSeq}) and (\ref{parametricSeq}), 
\[
   c_n = (-1)^n c, \quad d_{n+1} = (1-g_n)g_{n+1}, \quad n \geq 1,
\] 
where
\[
    g_{2n-1} = \frac{1}{2}(1-b_1), \quad g_{2n}= \frac{1}{2}(1-b_2), \quad n \geq 1.
\]

From Corollary \ref{Coro-General2-Zero-Bounds-Wn} we can say that the  zeros of $R_n$ are outside  the arc \linebreak $\mathcal{A}\big(\vartheta(|c|), 2\pi-\vartheta(|c|)\big)$.  This is in some sense a sharp result because one of the extreme points of $\mathcal{A}\big(\vartheta(|c|), 2\pi-\vartheta(|c|)\big)$ coincide with the position of the (possible) pure point of $\mu^{(b_1, b_2, c)}$ at $e^{i\vartheta(-c)}$.  Observe that when $b_1=b_2=b$ then $e^{i\vartheta(-c)}$ also coincide with  $e^{i\vartheta^{-}(b,b, c)}$ or $e^{i[2\pi - \vartheta^{-}(b,b, c)]}$. 

From Theorem  \ref{Thm-Restriction-1-for-cn-dn2} (see also Theorem \ref{Thm-Estimate-1-for-Support}) we have 
\[
     \supp (\mu^{(b_1, b_2, c)}- \mu^{(b_1, b_2, c)}(\{1\})\delta_1) \subseteq  \mathcal{A}\big[\vartheta_1, \vartheta_2\big],
\]
where $\vartheta_1$ and $\vartheta_2$ are such that $\dsp \cot(\vartheta_2/2) = \inf_{1 \leq n \leq \infty}u_{n+1}$ and $\dsp \cot(\vartheta_1/2) = \sup_{1 \leq n \leq \infty}v_{n+1}$. Clearly,  
\[
   u_{n+1}^{(-)}= - \frac{\sqrt{c^2+q_{n+1}}}{\sqrt{1 - q_{n+1}}}, \quad u_{n+1}^{(+)} = \frac{\sqrt{c^2+q_{n+1}}}{\sqrt{1 - q_{n+1}}}, \quad n \geq 1.
\]
To be able achieve any reasonable results from above, one still needs to find a dominant chain sequence $\hat{d}_{n+1}$ such that $q_{n+1} = d_{n+1}/\hat{d}_{n+1} < 1$, $n \geq 1$, is a scaling sequence for $\{d_{n+1}\} $.

In the particular case when $b_1=b_2=b$, we have $d_{n+1} = (1-b^2)/4$ and by choosing  $\hat{d}_{n+1} = 1/4$ we have 
\[
    u_{n+1}^{(-)}= - \frac{\sqrt{c^2+1-b^2}}{|b|}, \quad u_{n+1}^{(+)} = \frac{\sqrt{c^2+1-b^2}}{|b|}, \quad n \geq 1.
\]
This gives the optimal result, where 
\[
    \vartheta_2 = 2\pi-\vartheta^{+}(b,b, c) \quad \mbox{and} \quad \vartheta_1 = \vartheta^{+}(b,b, c).
\]
When  $b_1 = b_2$ does not hold, then for example when $|b_1| \geq 1/2$, $|b_2| \geq 1/2$ and $b_1b_2 > 0$, we still have $d_{n+1} < 1/4$.  Thus, we obtain for $\vartheta_1$, $\vartheta_2$, 
\[
     \cos(\vartheta_1/2) = \max\Big\{\frac{\sqrt{c^2 + (1+b_1)(1-b_2)}}{\sqrt{c^2+1}}, \frac{\sqrt{c^2 + (1-b_1)(1+b_2)}}{\sqrt{c^2+1}}    \Big\}    
\]
and $\vartheta_2 = 2\pi - \vartheta_1$. 
 
\medskip

\noindent {\bf Example 3.} For $b=\lambda+ i \eta$, $\lambda>-1/2$, $ \eta\in \R$, we consider the nontrivial positive measure $\mu^{(b)}$ on the unit circle given by 
\[
     d \mu^{(b)}(e^{i\theta}) =   e^{-\eta\, \theta}\,  [\sin^{2}(\theta/2)]^{\lambda} d \theta,
\]
where  $\lambda > -1/2$. It was shown in \cite{Ranga-PAMS2010} that the associated Verblunsky coefficients are 
\[
      \alpha_{n-1}^{(b)} = -\frac{(b)_{n}}{(\overline{b}+1)_{n}}, \quad n \geq 1.
\]
As already established in \cite{Costa-Felix-Ranga-JAT2013},  the polynomials $\{R_n\}$ constructed by (\ref{Eq-CDKernel-POPUC}) satisfy the three term recurrence formula (\ref{Eq-TTRR-Rn}), with
\begin{equation*} \label{Eq-Special-Example2}
 \begin{array}l
  \dsp c_n = \frac{\eta}{n+\lambda},  \quad d_{n+1} = d_{n+1}^{(\lambda)} = \frac{1}{4} \frac{n\,(n+2\lambda+1)}{(n+\lambda)(n+\lambda+1)}, \ \ n\geq 1.
 \end{array}
\end{equation*}
Observe that the positive chain sequence $\{d_{n+1}\}_{n=1}^{\infty}$ appearing here coincides with that in \eqref{def:Gegenbauer}, corresponding to the ultra spherical polynomials. 
 
Since $\supp(\mu^{(b)})=\T$, the accumulation set of zeros of $R_n$ (as $n\to\infty$) is the whole  unit circle. Let us  use the results from Section \ref{Sec-BEZeros} 
in order to find  estimates for the bounds of the extreme zeros of $R_N$ for any fixed $N \geq 2$. 

If $\lambda  \geq 0$ then $d_{n+1} < 1/4$, and a finite  positive chain sequence $\{\hat{d}_{n+1}\}_{n=1}^{N-1}$ dominating $\{d_{n+1}\}_{n=1}^{N-1}$ is 
\begin{equation} \label{Eq-choice1-dn}
    \hat{d}_{n+1} = d^{(N-1)} = \frac{1}{4\cos^{2}(\pi/(N+1))}, \quad n =1, 2, \ldots, N-1,
\end{equation}
(see \cite{IsmailLi-1992}), which gives us the scaling sequence for $\{d_{n+1}\}_{n=1}^{N-1}$:
\begin{equation}\label{example31}
q_{n+1} = \frac{1}{\cos^{2}(\pi/(N+1))} \frac{n(n+2\lambda+1)}{(n+\lambda)(n+\lambda+1)}, \quad n =1, 2, \ldots, N-1.
\end{equation}

The results given in Tables \ref{tbl-1}, \ref{tbl-2} are obtained as an application of Theorem \ref{Thm-Estimates-for-Zero-Bounds-Wn}, when the pair $(\lambda, \eta)$ are respectively  $(1,1)$ and $(10, 0.01)$.  The information within the brackets is to indicate for what value of $n$ the
\[
   \min\{u_n^{(-)}/\sqrt{1+(u_n^{(-)})^2}: 2 \leq n \leq N\}  \  \mbox{and} \  \max\{u_n^{(+)}/\sqrt{1+(u_n^{(+)})^2}: 2 \leq n \leq N\}
\]
 are attained.  For example,  the line corresponding to $N=30$ in Table \ref{tbl-1} informs us $B_{30} =  \max\{u_n^{(+)}/\sqrt{1+(u_n^{(+)})^2}: 2 \leq n \leq 30\} = u_{24}^{(+)}/\sqrt{1+(u_{24}^{(+)})^2}$. 

Clearly, in both tables (\ref{tbl-1} and \ref{tbl-2}),  $2\arccos(B_{N}) < \theta_{N,1}$ and $2\arccos(A_{N}) > \theta_{N,N}$ as expected. 

\begin{table}[h] 
\begin{center}
 \begin{tabular}{|r|r|c|r|r|} \hline
\multicolumn{1}{|c|}{$N$} & \multicolumn{1}{c|}{$\approx 2\arccos(B_N)$} &
\multicolumn{1}{c|}{$\approx \theta_{N,1}$} & \multicolumn{1}{c|}{$\approx 2\arccos(A_N)$} & \multicolumn{1}{c|}{$\approx \theta_{N,N}$}  \\[0ex]
\hline
$10$ & $0.4639446$ \ \ \ & $0.4972376$ & $5.4352508$ \ \ \ & $5.1944808$  \\[0ex]
     & $(u_{8}^{(+)})$         &             &        $(u_{10}^{(-)})$ &       \\[0ex]
\hline
$15$ & $0.3198603$ \ \ \ & $0.3499643$ & $5.7029950$ \ \ \ & $5.5126714$  \\[0ex]
     & $(u_{12}^{(+)})$        &             &     $(u_{15}^{(-)})$    &       \\[0ex]
\hline
$30$ & $0.1653904$ \ \ \ & $0.1855341$ & $5.9853660$ \ \ \ & $5.8730792$ \\[0ex]
     & $(u_{24}^{(+)})$        &             &        $(u_{30}^{(-)})$ &      \\[0ex]
\hline
$50$ & $0.1005688$ \ \ \ & $0.1141174$ & $6.1025923$ \ \ \ & $6.0306959$ \\[0ex]
     & $(u_{39}^{(+)})$        &             &        $(u_{50}^{(-)})$ &      \\[0ex]
\hline
\end{tabular} \caption{{\small Information on the bounds for the extreme zeros $e^{i\theta_{N,1}}$ and $e^{i\theta_{N,N}}$ of $R_N^{(b)}$ when $b = 1+i$.  }}\label{tbl-1}
\end{center}
\end{table}
%

\begin{table}[h] 
\begin{center}
 \begin{tabular}{|r|r|c|r|r|} \hline
\multicolumn{1}{|c|}{$N$} & \multicolumn{1}{c|}{$\approx 2\arccos(B_N)$} &
\multicolumn{1}{c|}{$\approx \theta_{N,1}$} & \multicolumn{1}{c|}{$\approx 2\arccos(A_N)$} & \multicolumn{1}{c|}{$\approx \theta_{N,N}$}  \\[0ex]
\hline
$10$ & $1.2564079$ \ \ \ & $1.4994620$ & $5.0247247$ \ \ \ & $4.7814017$  \\[0ex]
     & $(u_{10}^{(+)})$         &            &        $(u_{10}^{(-)})$ &       \\[0ex]
\hline
$15$ & $0.9620515$ \ \ \ & $1.1898228$ & $5.3195004$ \ \ \ & $5.0914664$  \\[0ex]
     & $(u_{15}^{(+)})$        &             &        $(u_{15}^{(-)})$ &       \\[0ex]
\hline
$30$ & $0.5731032$ \ \ \ & $0.7410146$ & $5.7090691$ \ \ \ & $5.5409545$ \\[0ex]
     & $(u_{30}^{(+)})$        &             &        $(u_{30}^{(-)})$ &      \\[0ex]
\hline
$50$ & $0.3746598$ \ \ \ & $0.4949570$ & $5.9078531$ \ \ \ & $5.7874076$ \\[0ex]
     & $(u_{50}^{(+)})$        &             &        $(u_{50}^{(-)})$ &      \\[0ex]
\hline
\end{tabular} \caption{{\small Information on the bounds for the extreme zeros $e^{i\theta_{N,1}}$ and $e^{i\theta_{N,N}}$ of $R_N^{(b)}$ when $b = 10+i\,0.01$.  }}\label{tbl-2}
\end{center}
\end{table}

The sequence in \eqref{example31} is increasing in $n$, while for $\eta > 0$ the sequence  $\{c_n\}_{n=1}^{N-1}$ is  decreasing  in $n$. Thus, for $h$ in (\ref{Eq-Inequality-x}) we have that $h(x; c_{n},c_{n+1}) < h(x; c_{n-1},c_{n})$ for $-1 < x < \cos(\vartheta/2)$, where $\cot(\vartheta/2) = c_{n}$. Together with our choice of  $\{q_{n+1}\}_{n=1}^{N-1}$ we see that  $\{u_{n+1}^{(-)}\}_{n=1}^{N-1}$ in Theorem \ref{Thm-Estimates-for-Zero-Bounds-Wn} is  decreasing in $n$. This is confirmed  from the information in brackets in the column associated with $\approx 2\arccos(A_N)$ in Tables \ref{tbl-1} and \ref{tbl-2}.  Therefore, when $\eta > 0$ and $\lambda \geq 0$,  the extreme zero $z_{N,N} = e^{i\theta_{N,N}}$ of $R_{N}$ ($N \geq 2$) is such that 
\[
     \cot(\theta_{N,N}/2) < u_{N}^{(-)} = \frac{2(c_{N-1}c_{N} - q_{N})}{(c_{N-1}+c_{N}) + \sqrt{(c_{N-1}+c_{N})^2 - 4 (1-q_{N})(c_{N-1}c_{N} - q_{N})}}, 
\]
where 
\[
     q_{N} = \frac{1}{\cos^{2}(\pi/(N+1))} \frac{(N-1)(N+2\lambda)}{(N+\lambda-1)(N+\lambda)} \quad \mbox{and} \quad c_{n} = \frac{\eta}{n+\lambda}, \quad n \geq 1.
\]

Now, when $0 > \lambda > -1/2$ we have $d_{n+1} > 1/4$ and the choice (\ref{Eq-choice1-dn}) for $\{\hat{d}_{n+1}\}_{n=1}^{N-1}$ is not viable.  For example, with $\lambda = -1/4$ and $N= 10$ we have $q_2 = d_{2}/d^{(9)} = 1.0521448759.... > 1$. 

Thus, when $0 > \lambda > -1/2$, by considering the results of Lemma \ref{Lemma-Dominant-lambda-FPCS},  we choose $\{\hat{d}_{n+1}\}_{n=1}^{N-1}$ to be 
\begin{equation} \label{Eq-choice2-dn}
     \hat{d}_{n+1}  = \frac{1}{\cos^2(\pi/(2N))}\,d_{n+1}^{(-1/2)}, \quad n =1, 2, \ldots, N-1.
\end{equation}
With this choice, the results given in Tables \ref{tbl-3} are obtained as an application of Theorem \ref{Thm-Estimates-for-Zero-Bounds-Wn}, when  $(\lambda, \eta)$ is  $(-0.25, 1)$.

With the choice of $\{\hat{d}_{n+1}\}_{n=1}^{N-1}$ as in (\ref{Eq-choice2-dn}), again we observe that  $\{q_{n+1}\}_{n=1}^{N-1}$, where
\[
   q_{n+1} = \frac{1}{\cos^{2}(\pi/(2N))} \frac{(n^2-1/4)(n+2\lambda+1)}{n(n+\lambda)(n+\lambda+1)},
\] 
is an increasing sequence as $n$ increase from $1$ to $N-1$. Consequently, we conclude that when $\eta > 0$ and $-1/2 < \lambda < 0$,  the extreme zero $z_{N,N} = e^{i\theta_{N,N}}$ of $R_{N}$ ($N \geq 2$) is such that 
\[
     \cot(\theta_{N,N}/2) < u_{N}^{(-)} = \frac{2(c_{N-1}c_{N} - q_{N})}{(c_{N-1}+c_{N}) + \sqrt{(c_{N-1}+c_{N})^2 - 4 (1-q_{N})(c_{N-1}c_{N} - q_{N})}}, 
\]
where
\[
     q_{N} = \frac{1}{\cos^{2}(\pi/(2N))} \frac{\big((N-1)^2-1/4\big)(N+2\lambda)}{(N-1)(N+\lambda-1)(N+\lambda)} \quad \mbox{and} \quad c_{n} = \frac{\eta}{n+\lambda}, \quad n \geq 1.
\]
%

\begin{table}[h] 
\begin{center}
 \begin{tabular}{|r|r|c|r|r|} \hline
\multicolumn{1}{|c|}{$N$} & \multicolumn{1}{c|}{$\approx 2\arccos(B_N)$} &
\multicolumn{1}{c|}{$\approx \theta_{N,1}$} & \multicolumn{1}{c|}{$\approx 2\arccos(A_N)$} & \multicolumn{1}{c|}{$\approx \theta_{N,N}$}  \\[0ex]
\hline
$10$ & $0.1016913$ \ \ \ & $0.1991716$ & $5.6818261$ \ \ \ & $5.2285409$  \\[0ex]
     & $(u_{4}^{(+)})$         &             &  $(u_{10}^{(-)})$ &          \\[0ex]
\hline
$15$ & $0.0635237$ \ \ \ & $0.1358499$ & $5.8881850$ \ \ \ & $5.5600926$  \\[0ex]
     & $(u_{4}^{(+)})$         &             &  $(u_{15}^{(-)})$ &          \\[0ex]
\hline
$30$ & $0.0290353$ \ \ \ & $0.0695512$ & $6.0885290$ \ \ \ & $5.9117387$  \\[0ex]
     & $(u_{7}^{(+)})$         &             &  $(u_{30}^{(-)})$       &    \\[0ex]
\hline
$50$ & $0.0166939$ \ \ \ & $0.0421377$ & $6.1670558$ \ \ \ & $6.0579734$  \\[0ex]
     & $(u_{11}^{(+)})$         &             &  $(u_{50}^{(-)})$       &    \\[0ex]
\hline
 \end{tabular} 
 \caption{{\small Information on the bounds for the extreme zeros 
  $e^{i\theta_{N,1}}$ and $e^{i\theta_{N,N}}$ of $R_N^{(b)}$ when $b = -0.25+i $.  }}\label{tbl-3}
\end{center}
\end{table}

\section*{Acknowledgements}

The first author (AMF) was partially supported by the Spanish Government together with the European Regional Development Fund (ERDF) under grants
MTM2011-28952-C02-01 (from MICINN) and MTM2014-53963-P (from MINECO), by Junta de Andaluc\'{\i}a (the Excellence Grant P11-FQM-7276 and the research group FQM-229), and by Campus 
de Excelencia Internacional del Mar (CEIMAR) of the University of Almer\'{\i}a.

The second author's research was
supported by grants 305073/2014-1 and \linebreak 475502/2013-2 of CNPq and grant 2009/13832-9 of FAPESP.

This research also part of the third author's PhD thesis supported by a grant from CAPES, Brazil. 

Part of this work was carried out during the visit of AMF to the Department of Applied Mathematics of  IBILCE, UNESP. He acknowledges the hospitality of the hosting department, as well as a the financial support of the Special Visiting Researcher Fellowship 401891/2013-5 of the Brazilian Mobility Program ``Science without borders''. 

\newpage 


\end{document}